\numberwithin{equation}{section}
\numberwithin{figure}{section}
\newtheorem{theorem}{Theorem}[section]
\newtheorem{corollary}[theorem]{Corollary}
\newtheorem{lemma}[theorem]{Lemma}
\newtheorem{proposition}[theorem]{Proposition}
\newtheorem{conjecture}[theorem]{Conjecture}
\theoremstyle{remark}
\newtheorem{definition}[theorem]{Definition}
\newtheorem{assumption}[theorem]{Assumption}
\newtheorem{remark}[theorem]{Remark}
\newtheorem*{notation}{Notation}
\definecolor{darkgreen}{rgb}{0,0.75,0}
\definecolor{darkred}{rgb}{0.75,0,0}
\definecolor{darkmagenta}{rgb}{0.5,0,0.5}
\newcommand{\mr}[1]{{\tt \href{http://www.ams.org/mathscinet-getitem?mr=#1}{MR#1}}}
\renewcommand{\arxiv}[1]{{\tt \href{http://arxiv.org/abs/#1}{arXiv:#1}}}
\newcommand{\one}{\mathds{1}}
\newcommand{\loc}{\operatorname{loc}}
\newcommand{\diam}{\mathop{\operatorname{diam}}}
\newcommand{\supp}{\mathop{\operatorname{supp}}}
\newcommand{\on}[1]{\operatorname{#1}}
\begin{document}

\begin{frontmatter}
\title{On singularity of energy measures for symmetric diffusions with full off-diagonal heat kernel estimates}
\runtitle{On singularity of energy measures for symmetric diffusions}

\begin{aug}
\author[A]{\fnms{Naotaka} \snm{Kajino}\ead[label=e1]{nkajino@math.kobe-u.ac.jp}}
\and
\author[B]{\fnms{Mathav} \snm{Murugan}\ead[label=e2]{mathav@math.ubc.ca}}
\runauthor{N.\ Kajino and M.\ Murugan}
\address[A]{Department of Mathematics, Graduate School of Science, Kobe University, \printead{e1}}
\address[B]{Department of Mathematics, University of British Columbia, \printead{e2}}
\end{aug}

\begin{abstract}
We show that for a strongly local, regular symmetric Dirichlet form over
a complete, locally compact geodesic metric space, full off-diagonal
heat kernel estimates with walk dimension strictly larger than two
(\emph{sub-Gaussian} estimates) imply the singularity of the energy measures
with respect to the symmetric measure, verifying a conjecture by M.\ T.\ Barlow in
[Contemp.\ Math., vol.\ 338, 2003, pp.\ 11--40].
We also prove that in the contrary case of walk dimension two, i.e., where
full off-diagonal \emph{Gaussian} estimates of the heat kernel hold,
the symmetric measure and the energy measures are mutually absolutely continuous
in the sense that a Borel subset of the state space has measure zero for the symmetric
measure if and only if it has measure zero for the energy measures of all functions
in the domain of the Dirichlet form.
\end{abstract}

\begin{keyword}[class=MSC2020]
\kwd[Primary ]{31E05}
\kwd{35K08}
\kwd{60G30}
\kwd[; secondary ]{28A80}
\kwd{31C25}
\kwd{60J60}
\end{keyword}

\begin{keyword}
\kwd{Symmetric diffusion}
\kwd{regular symmetric Dirichlet form}
\kwd{energy measure}
\kwd{singularity}
\kwd{absolute continuity}
\kwd{heat kernel}
\kwd{sub-Gaussian estimate}
\kwd{Gaussian estimate}
\end{keyword}

\end{frontmatter}


\section{Introduction} \label{sec:intro}

It is an established result in the field of analysis on fractals that, on a large class of
typical fractal spaces, there exists a nice diffusion process $\{X_{t}\}_{t\geq 0}$ which
is symmetric with respect to some canonical measure $m$ and exhibits strong sub-diffusive
behavior in the sense that its transition density (heat kernel) $p_{t}(x,y)$ satisfies
the following \textbf{sub-Gaussian estimates}:
\begin{equation}\label{e:HKEbeta}
\begin{split}
\frac{c_{1}}{m(B(x,t^{1/\beta}))} \exp\biggl( - c_{2} \Bigl( \frac{d(x,y)^{\beta}}{t} &\Bigr)^{\frac{1}{\beta-1}}\biggr)
	\leq p_{t}(x,y)\\
&\leq\frac{c_{3}}{m(B(x,t^{1/\beta}))} \exp\biggl( - c_{4} \Bigl( \frac{d(x,y)^{\beta}}{t} \Bigr)^{\frac{1}{\beta-1}}\biggr)
\end{split}
\end{equation}
for all points $x,y$ and all $t>0$, where $c_{1},c_{2},c_{3},c_{4}>0$ are some constants,
$d$ is a natural geodesic metric on the space, $B(x,r)$ denotes the open ball of radius
$r$ centered at $x$, and $\beta\geq 2$ is a characteristic of the diffusion called the
\textbf{walk dimension}. This result was obtained first for the Sierpi\'{n}ski gasket in
\cite{BP}, then for nested fractals in \cite{Kum}, for affine nested fractals in \cite{FHK}
and for Sierpi\'{n}ski carpets in \cite{BB89,BB92,BB99} (see also \cite{KZ,BBK,BBKT}),
and in most of (essentially all) the known examples it turned out that $\beta>2$;
see, e.g., Proposition \ref{p:sisg-case-sing} below and \cite{Kaj20a} for an elementary
proof of $\beta>2$ for Sierpi\'{n}ski gaskets and carpets, respectively.
Therefore \eqref{e:HKEbeta} implies in particular
that a typical distance the diffusion travels by time $t$ is of order $t^{1/\beta}$,
which is in sharp contrast with the order $t^{1/2}$ of such a distance for the
Brownian motion and uniformly elliptic diffusions on Euclidean spaces and
Riemannian manifolds, where \eqref{e:HKEbeta} with $\beta=2$,
the usual \textbf{Gaussian estimates}, are known to hold widely;
see, e.g., \cite{Stu95a,Stu96,SC,Gri} and references therein.

The main concern of this paper is singularity and absolute continuity of the energy
measures associated with a general $m$-symmetric diffusion $\{X_{t}\}_{t\geq 0}$ satisfying
\eqref{e:HKEbeta} for some $\beta\geq 2$, on a locally compact separable metric measure
space $(X,d,m)$. Under the standard assumption of the regularity of the Dirichlet form
$(\mathcal{E},\mathcal{F})$ of $\{X_{t}\}_{t\geq 0}$, the \textbf{energy measure}
of a function $f\in\mathcal{F}\cap L^{\infty}(X,m)$ is defined as
the unique Borel measure $\Gamma(f,f)$ on $X$ such that
\begin{align}\label{e:EnergyMeas-intro}
\int_{X} g \, d\Gamma(f,f) &= \mathcal{E}(f,fg)-\frac{1}{2}\mathcal{E}(f^{2},g)\\
	&=\lim_{t\downarrow 0}\frac{1}{2t}\int_{X}g(x)\mathbb{E}_{x}\bigl[|f(X_{t})-f(X_{0})|^{2}\bigr]\,dm(x) \nonumber
\end{align}
for any $g\in\mathcal{F}\cap\mathcal{C}_{\mathrm{c}}(X)$, where a quasi-continuous $m$-version
of $f$ is used for defining $\{f(X_{t})\}_{t\geq 0}$ and $\mathcal{C}_{\mathrm{c}}(X)$
denotes the set of $\mathbb{R}$-valued continuous functions on $X$ with compact supports.
Then the approximation of $f$ by $\{(-n)\vee(f\wedge n)\}_{n=1}^{\infty}$ defines
$\Gamma(f,f)$ also for general $f\in\mathcal{F}$. In probabilistic terms, let
\begin{equation} \label{e:F-decomp}
f(X_{t})-f(X_{0}) = M^{[f]}_{t} + N^{[f]}_{t}, \qquad t\geq 0,
\end{equation}
be the \emph{Fukushima decomposition}, an extension of It\^{o}'s formula and the
semimartingale decomposition in the framework of regular symmetric Dirichlet forms,
of $\{f(X_{t})-f(X_{0})\}_{t\geq 0}$ into the sum of the martingale part
$M^{[f]}=\{M^{[f]}_{t}\}_{t\geq 0}$ and the zero-energy part
$N^{[f]}=\{N^{[f]}_{t}\}_{t\geq 0}$ (see \cite[Theorem 5.2.2]{FOT}). Then
$\Gamma(f,f)$ arises as the Revuz measure of the quadratic variation process
$\langle M^{[f]}\rangle=\{\langle M^{[f]}\rangle_{t}\}_{t\geq 0}$ of $M^{[f]}$
(see \cite[Theorems 5.1.3 and 5.2.3]{FOT}). Therefore the question of whether $\Gamma(f,f)$
is singular with respect to $m$ could be considered as an analytical counterpart of
that of whether $\langle M^{[f]}\rangle=\{\langle M^{[f]}\rangle_{t}\}_{t\geq 0}$
is singular as a function in $t\in[0,\infty)$, although the actual relation between
these two questions is unclear. A better-founded probabilistic implication, due
to \cite[Proposition 12]{HN}, of the singularity of $\Gamma(f,f)$ with respect to $m$
for all $f\in\mathcal{F}$ is that there is no representation, in a certain stochastic
sense, of the diffusion $\{X_{t}\}_{t\geq 0}$ in terms of a Brownian motion on
$\mathbb{R}^{k}$ for any $k\in\mathbb{N}$; see \cite[Section 4]{HN} for details.

When $(\mathcal{E},\mathcal{F})$ is given, on the basis of some differential structure
on $X$, by $\mathcal{E}(f,g)=\int_{X}\langle \nabla f, \nabla g\rangle_{x}\,dm(x)$
for some first-order differential operator $\nabla$ satisfying the usual Leibniz rule
and some (measurable) Riemannian metric $\langle\cdot,\cdot\rangle_{x}$,
the right-hand side of \eqref{e:EnergyMeas-intro} is easily seen to be equal to
$\int_{X}g(x)\langle \nabla f,\nabla f\rangle_{x}\,dm(x)$ and hence
$d\Gamma(f,f)(x)=\langle \nabla f,\nabla f\rangle_{x}\,dm(x)$. In particular,
$\Gamma(f,f)$ is absolutely continuous with respect to the symmetric measure $m$.

On the other hand, diffusions on self-similar fractals are known to exhibit completely
different behavior. For a class of self-similar fractals including the Sierpi\'{n}ski gasket,
Kusuoka showed in \cite{Kus89} that the energy measures are \emph{singular} with respect
to the symmetric measure, which in the case of the Sierpi\'{n}ski gasket is the standard
$\log_{2}3$-dimensional Hausdorff measure. Later in \cite{Kus93} he extended
this result to the case of the Brownian motion on a class of nested fractals, and
Ben-Bassat, Strichartz and Teplyaev \cite{BST} obtained similar results for a class
of self-similar Dirichlet forms on post-critically finite self-similar fractals
under simpler assumptions and with a shorter proof.

The best result known so far in this direction is due to Hino \cite{Hin05}. There he proved
that for a general self-similar Dirichlet form on a self-similar set,
\emph{including the case of the Brownian motion on Sierpi\'{n}ski carpets},
the following dichotomy holds for each self-similar (Bernoulli) measure $\mu$
(including the symmetric measure $m$):
\begin{itemize}[label={\quad either \textup{(i)}},align=left,leftmargin=*]
\item[either \textup{(i)}]$\mu=\Gamma(h,h)$ for some $h\in\mathcal{F}$ that is harmonic on the complement
	of the canonical ``boundary'' of the self-similar set,
\item[or \textup{(ii)}]$\Gamma(f,f)$ is singular with respect to $\mu$ for any $f\in\mathcal{F}$.
\end{itemize}
It was also proved in \cite{Hin05} that the lower inequality in \eqref{e:HKEbeta} for
the heat kernel $p_{t}(x,y)$ with $\beta>2$, which is known to hold in particular for
Sierpi\'{n}ski carpets by the results in \cite{BB92,BB99,Kaj20a} (see also \cite{KZ,BBK,BBKT}),
excludes the possibility of case (i) for $\mu=m$ and thus implies
the singularity of $\Gamma(f,f)$ with respect to the symmetric measure $m$ for any
$f\in\mathcal{F}$. This is the only existing result proving the singularity of the
energy measures for diffusions on \emph{infinitely ramified} self-similar fractals
like Sierpi\'{n}ski carpets. The reader is also referred to \cite{HN} for simple
geometric conditions which exclude case (i) of the above dichotomy in the setting
of post-critically finite self-similar sets.

All these results on the singularity of the energy measures heavily relied on the
exact self-similarity of the state space and the Dirichlet form. In reality, however,
even without the self-similarity the anomalous space-time scaling relation exhibited
by the terms $t^{1/\beta}$ and $d(x,y)^{\beta}/t$ in \eqref{e:HKEbeta} should still
imply singular behavior of the sample paths of the quadratic variation
$\langle M^{[f]}\rangle$ of the martingale part $M^{[f]}$ in \eqref{e:F-decomp}.
Therefore it is natural to conjecture, as Barlow did in \cite[Section 5, Remarks, 5.-1.]{Bar03},
that the heat kernel estimates \eqref{e:HKEbeta} with $\beta>2$ should imply the
singularity of the energy measures with respect to the symmetric measure $m$. The
first half of our main result (Theorem \ref{t:main}-(a)) verifies this conjecture in
the completely general framework of a strongly local, regular symmetric Dirichlet form
$(\mathcal{E},\mathcal{F})$ over a complete, locally compact separable metric measure space
$(X,d,m)$ satisfying a certain geodesic-like property called the \textbf{chain condition}
(see Definition \ref{d:chain}-(a)) and the \textbf{volume doubling property}
\begin{equation}\label{e:VD-intro}
m(B(x,2r)) \leq C_D m(B(x,r)), \qquad (x,r)\in X\times(0,\infty).
\end{equation}

Note here that the chain condition is necessary for making the strict inequality
$\beta>2$ for the exponent $\beta$ in \eqref{e:HKEbeta} meaningful.
Indeed, by \cite[Corollary 1.8 (or Theorem 2.11)]{Mur} and \cite[Proof of Theorem 6.5]{GT},
under the general framework mentioned above, \eqref{e:HKEbeta} is equivalent to
the conjunction of the chain condition, \eqref{e:VD-intro}, the upper inequality
in \eqref{e:HKEbeta} and the so-called \textbf{near-diagonal lower estimate}
\begin{equation}\label{e:NLHKEbeta}
p_{t}(x,y) \geq \frac{c_{1}}{m(B(x,t^{1/\beta}))}
	\qquad \textrm{for all $x,y\in X$ with $d(x,y) \leq \delta t^{1/\beta}$}
\end{equation}
for some constants $c_{1},\delta>0$. Then by \cite[Theorem 7.4]{GT}, this latter set
of conditions with the chain condition dropped is characterized, under the additional
assumption that $X$ is non-compact, by the conjunction of \eqref{e:VD-intro},
the scale-invariant elliptic Harnack inequality and the mean exit time estimate
\begin{equation}\label{e:exit-time}
c_{5}r^{\beta} \leq \mathbb{E}_{x}[\tau_{B(x,r)}] \leq c_{6}r^{\beta},
	\qquad(x,r)\in X\times(0,\infty),
\end{equation}
where $\tau_{B(x,r)}:=\inf\{t\in[0,\infty)\mid X_{t}\not\in B(x,r)\}$ ($\inf\emptyset:=\infty$).
Since the last characterization is preserved under the change of the metric from $d$ to
$d^{\alpha}$ for any $\alpha\in(0,1)$ with the price of replacing $\beta$ by $\beta/\alpha$,
it follows that we would be able to realize an arbitrarily large value of $\beta\geq 2$
by suitable changes of metrics if we did not assume the chain condition.

To complement the above result for the case of $\beta>2$, as the second half of our main
result (Theorem \ref{t:main}-(b)) we also prove that \eqref{e:HKEbeta} with $\beta=2$
implies the ``mutual absolute continuity'' between the symmetric measure $m$ and
the energy measures $\Gamma(f,f)$, i.e., that for each Borel subset $A$ of the state
space $X$, $m(A)=0$ if and only if $\Gamma(f,f)(A)=0$ for any $f\in\mathcal{F}$.
In the context of studying \eqref{e:HKEbeta} with $\beta=2$ (Gaussian estimates),
it is customary to \emph{assume} from the beginning of the analysis that $\Gamma(f,f)$ is
absolutely continuous with respect to $m$ for a large class of $f\in\mathcal{F}$,
whereas we \emph{deduce from} \eqref{e:HKEbeta} with $\beta=2$ this absolute
continuity for all $f\in\mathcal{F}$ as part of Theorem \ref{t:main}-(b);
see Remark \ref{rmk:dom} for some related results.

In fact, we state and prove our result in a slightly wider framework allowing a general
space-time scaling function $\Psi$ instead of considering just $\Psi(r)=r^{\beta}$. This
generalization enables us to conclude the singularity of the energy measures for the canonical
Dirichlet forms on (spatially homogeneous) \emph{scale irregular Sierpi\'{n}ski gaskets}
studied in \cite{Ham92,BH,Ham00}, which are not exactly self-similar and hence are outside
of the frameworks of the preceding works \cite{Kus89,Kus93,BST,Hin05,HN}. See also
\cite[Chapter 24]{Kig12} for a discussion of these examples and Section \ref{s:sisg}
below for the proof that Theorem \ref{t:main}-(a) applies to (at least some of) them.

This paper is organized as follows. In Section \ref{sec:framework-result} we introduce the
framework in detail and give the precise statement of our main result (Theorem \ref{t:main}).
Then its first half on the singularity of the energy measures (Theorem \ref{t:main}-(a))
is proved in Section \ref{s:sing} and its second half on the absolute continuity
(Theorem \ref{t:main}-(b)) is proved in Section \ref{s:ac}.
An application of Theorem \ref{t:main}-(a) to some scale irregular
Sierpi\'{n}ski gaskets is presented in Section \ref{s:sisg}.
In Appendix \ref{sec:appendix}, for the reader's convenience we give complete proofs
of a couple of miscellaneous facts utilized in the proof of Theorem \ref{t:main}-(a).

\begin{notation}
Throughout this paper, we use the following notation and conventions.
\begin{itemize}[label=\textup{(b)},align=left,leftmargin=*]
\item[\textup{(a)}]The symbols $\subset$ and $\supset$ for set inclusion
	\emph{allow} the case of the equality.
\item[\textup{(b)}]$\mathbb{N}:=\{n\in\mathbb{Z}\mid n>0\}$, i.e., $0\not\in\mathbb{N}$.
\item[\textup{(c)}]The cardinality (the number of elements) of a set $A$ is denoted by $\#A$.
\item[\textup{(d)}]We set $\infty^{-1}:=0$. We write
	$a\vee b:=\max\{a,b\}$, $a\wedge b:=\min\{a,b\}$, $a^{+}:=a\vee 0$ and
	$a^{-}:=-(a\wedge 0)$ for $a,b\in[-\infty,\infty]$, and
	we use the same notation also for $[-\infty,\infty]$-valued functions and
	equivalence classes of them. All numerical functions in this paper are assumed
	to be $[-\infty,\infty]$-valued.
\item[\textup{(e)}]Let $X$ be a non-empty set. We define $\one_{A}=\one_{A}^{X}\in\mathbb{R}^{X}$ for $A\subset X$ by
	$\one_{A}(x):=\one_{A}^{X}(x):=\bigl\{\begin{smallmatrix}1 & \textrm{if $x\in A$,}\\ 0 & \textrm{if $x\not\in A$,}\end{smallmatrix}$
	and set $\|f\|_{\sup}:=\|f\|_{\sup,X}:=\sup_{x\in X}|f(x)|$ for $f:X\to[-\infty,\infty]$.
\item[\textup{(f)}]Let $X$ be a topological space. We set
	$\mathcal{C}(X):=\{f\mid\textrm{$f:X\to\mathbb{R}$, $f$ is continuous}\}$ and
	$\mathcal{C}_{\mathrm{c}}(X):=\{f\in\mathcal{C}(X)\mid\textrm{$X\setminus f^{-1}(0)$ has compact closure in $X$}\}$.
\item[\textup{(g)}]Let $(X,\mathcal{B})$ be a measurable space and let $\mu,\nu$ be $\sigma$-finite
	measures on $(X,\mathcal{B})$. We write $\nu \ll \mu$ and $\nu \perp \mu$ to mean that
	$\nu$ is absolutely continuous and singular, respectively, with respect to $\mu$.
	We set $\mu|_{A}:=\mu|_{\mathcal{B}|_{A}}$ for $A\in\mathcal{B}$,
	where $\mathcal{B}|_{A}:=\{B\cap A\mid B\in\mathcal{B}\}$.
\end{itemize}
\end{notation}

\section{Framework and the main result}\label{sec:framework-result}

In this section, we introduce the framework of this paper and state our main result.
After introducing the framework of a strongly local regular Dirichlet space and
the associated energy measures in Subsection \ref{ssec:MMD-EnergyMeas}, we give
in Subsection \ref{ssec:HKE-VDPICS} the precise formulation of the off-diagonal
heat kernel estimates and an equivalent condition for the estimates which
is convenient for the proof of the main result. Then we give the statement of our main
theorem (Theorem \ref{t:main}) in Subsection \ref{ssec:result} and outline its proof
in Subsection \ref{ssec:outline}.

\subsection{Metric measure Dirichlet space and energy measure}\label{ssec:MMD-EnergyMeas}

Throughout this paper, we consider a \emph{complete}, locally compact separable metric
space $(X,d)$, equipped with a Radon measure $m$ with full support, i.e., a Borel measure
$m$ on $X$ which is finite on any compact subset of $X$ and strictly positive on any
non-empty open subset of $X$, and we always assume $\#X\geq 2$ to exclude the trivial
case of $\#X=1$. Such a triple $(X,d,m)$ is referred to as a \emph{metric measure space}.
We set $B(x,r):=\{y\in X\mid d(x,y)<r\}$ for $(x,r) \in X\times(0,\infty)$ and
$\diam(X,d):=\sup_{x,y\in X}d(x,y)$; note that $\#X\geq 2$ is equivalent to $\diam(X,d)\in(0,\infty]$.

Furthermore let $(\mathcal{E},\mathcal{F})$ be a \emph{symmetric Dirichlet form} on $L^{2}(X,m)$;
by definition, $\mathcal{F}$ is a dense linear subspace of $L^{2}(X,m)$, and
$\mathcal{E}:\mathcal{F}\times\mathcal{F}\to\mathbb{R}$
is a non-negative definite symmetric bilinear form which is \emph{closed}
($\mathcal{F}$ is a Hilbert space under the inner product $\mathcal{E}_{1}:= \mathcal{E}+ \langle \cdot,\cdot \rangle_{L^{2}(X,m)}$)
and \emph{Markovian} ($f^{+}\wedge 1\in\mathcal{F}$ and $\mathcal{E}(f^{+}\wedge 1,f^{+}\wedge 1)\leq \mathcal{E}(f,f)$ for any $f\in\mathcal{F}$).
Recall that $(\mathcal{E},\mathcal{F})$ is called \emph{regular} if
$\mathcal{F}\cap\mathcal{C}_{\mathrm{c}}(X)$ is dense both in $(\mathcal{F},\mathcal{E}_{1})$
and in $(\mathcal{C}_{\mathrm{c}}(X),\|\cdot\|_{\mathrm{sup}})$, and that
$(\mathcal{E},\mathcal{F})$ is called \emph{strongly local} if $\mathcal{E}(f,g)=0$
for any $f,g\in\mathcal{F}$ with $\supp_{m}[f]$, $\supp_{m}[g]$ compact and
$\supp_{m}[f-a\one_{X}]\cap\supp_{m}[g]=\emptyset$ for some $a\in\mathbb{R}$. Here
for a Borel measurable function $f:X\to[-\infty,\infty]$ or an
$m$-equivalence class $f$ of such functions, $\supp_{m}[f]$ denotes the support of the measure $|f|\,dm$,
i.e., the smallest closed subset $F$ of $X$ with $\int_{X\setminus F}|f|\,dm=0$,
which exists since $X$ has a countable open base for its topology; note that
$\supp_{m}[f]$ coincides with the closure of $X\setminus f^{-1}(0)$ in $X$ if $f$ is continuous.
The pair $(X,d,m,\mathcal{E},\mathcal{F})$ of a metric measure space $(X,d,m)$ and a strongly local,
regular symmetric Dirichlet form $(\mathcal{E},\mathcal{F})$ on $L^{2}(X,m)$ is termed
a \emph{metric measure Dirichlet space}, or a \emph{MMD space} in abbreviation.
We refer to \cite{FOT,CF} for details of the theory of symmetric Dirichlet forms.

The central object of the study of this paper is the energy measures associated
with a MMD space, which are defined as follows. Note that $fg\in\mathcal{F}$
for any $f,g\in\mathcal{F}\cap L^{\infty}(X,m)$ by \cite[Theorem 1.4.2-(ii)]{FOT}
and that $\{(-n)\vee(f\wedge n)\}_{n=1}^{\infty}\subset\mathcal{F}$ and
$\lim_{n\to\infty}(-n)\vee(f\wedge n)=f$ in norm in $(\mathcal{F},\mathcal{E}_{1})$
by \cite[Theorem 1.4.2-(iii)]{FOT}.

\begin{definition}[{\cite[(3.2.13), (3.2.14) and (3.2.15)]{FOT}}]\label{d:EnergyMeas}
Let $(X,d,m,\mathcal{E},\mathcal{F})$ be a MMD space.
The \textbf{energy measure} $\Gamma(f,f)$ of $f\in\mathcal{F}$
associated with $(X,d,m,\mathcal{E},\mathcal{F})$ is defined,
first for $f\in\mathcal{F}\cap L^{\infty}(X,m)$ as the unique ($[0,\infty]$-valued)
Borel measure on $X$ such that
\begin{equation}\label{e:EnergyMeas}
\int_{X} g \, d\Gamma(f,f)= \mathcal{E}(f,fg)-\frac{1}{2}\mathcal{E}(f^{2},g) \qquad \textrm{for all $g \in \mathcal{F}\cap\mathcal{C}_{\mathrm{c}}(X)$,}
\end{equation}
and then by
$\Gamma(f,f)(A):=\lim_{n\to\infty}\Gamma\bigl((-n)\vee(f\wedge n),(-n)\vee(f\wedge n)\bigr)(A)$
for each Borel subset $A$ of $X$ for general $f\in\mathcal{F}$. We also define
the \textbf{mutual energy measure} $\Gamma(f,g)$ of $f,g\in\mathcal{F}$
as the Borel signed measure on $X$ given by
$\Gamma(f,g):=\frac{1}{2}\bigl(\Gamma(f+g,f+g)-\Gamma(f,f)-\Gamma(g,g)\bigr)$,
so that $\Gamma(\mspace{-0.8mu}\cdot\mspace{-0.8mu},\mspace{-0.8mu}\cdot\mspace{-0.8mu})$
is bilinear and symmetric and satisfies the Cauchy--Schwarz inequality:
\begin{align}\label{e:bilinear-energy}
\Gamma(&af+bg,af+bg)=a^{2}\Gamma(f,f)+2ab\Gamma(f,g)+b^{2}\Gamma(g,g), \qquad a,b \in \mathbb{R},\\
|\Gamma&(f,g)(B)|^{2} \leq \Gamma(f,f)(B)\Gamma(g,g)(B) \qquad \textrm{for all Borel subsets $B$ of $X$.}
\label{e:Cauchy-Schwarz-energy}
\end{align}
Note that by \cite[Lemma 3.2.3]{FOT} and the strong locality of $(\mathcal{E},\mathcal{F})$,
\begin{equation}\label{e:EnergyMeasTotalMass}
\Gamma(f,g)(X)=\mathcal{E}(f,g) \qquad \textrm{for all $f,g \in \mathcal{F}$.}
\end{equation}
\end{definition}

\subsection{Off-diagonal heat kernel estimates and equivalent condition}\label{ssec:HKE-VDPICS}

The most general form of the off-diagonal heat kernel estimates, which we are introducing
in Definition \ref{d:HKE} below, involves a homeomorphism $\Psi:[0,\infty) \to [0,\infty)$
representing the scaling relation between time and space variables:

\begin{assumption} \label{a:reg}
Throughout this paper, we fix a homeomorphism $\Psi:[0,\infty) \to [0,\infty)$ such that
\begin{equation} \label{e:reg}
C_{\Psi}^{-1} \biggl( \frac{R}{r} \biggr)^{\beta_{0}} \leq \frac{\Psi(R)}{\Psi(r)}
	\leq C_{\Psi} \biggl( \frac{R}{r} \biggr)^{\beta_{1}}
\end{equation}
for all $0 < r \leq R$ for some constants $1 < \beta_{0} \leq \beta_{1}$ and $C_{\Psi} \geq 1$.
\end{assumption}

The following condition is standard and often treated as part of the standing
assumptions in the context of heat kernel estimates on general MMD spaces.

\begin{definition}[\hypertarget{vd}{$\on{VD}$}]
Let $(X,d,m)$ be a metric measure space. We say that $(X,d,m)$ satisfies the
\textbf{volume doubling property} \hyperlink{vd}{$\on{VD}$},
if there exists a constant $C_{D}>1$ such that for all $x \in X$ and all $r>0$,
\begin{equation} \tag*{\hyperlink{vd}{$\on{VD}$}}
m(B(x,2r)) \leq C_{D} m(B(x,r)).
\end{equation}
Note that if $(X,d,m)$ satisfies \hyperlink{vd}{$\on{VD}$}, then $B(x,r)$ is relatively
compact (i.e., has compact closure) in $X$ for all $(x,r)\in X\times(0,\infty)$
by virtue of the completeness of $(X,d)$.
\end{definition}

\begin{definition}[\hypertarget{hke}{$\on{HKE}(\Psi)$}]\label{d:HKE}
Let $(X,d,m,\mathcal{E},\mathcal{F})$ be a MMD space, and let $\{P_{t}\}_{t>0}$
denote its associated Markov semigroup. A family $\{p_{t}\}_{t>0}$ of
$[0,\infty]$-valued Borel measurable functions on $X \times X$ is called the
\emph{heat kernel} of $(X,d,m,\mathcal{E},\mathcal{F})$, if $p_{t}$ is the integral kernel
of the operator $P_t$ for any $t>0$, that is, for any $t > 0$ and for any $f \in L^{2}(X,m)$,
\begin{equation*}
P_{t} f(x) = \int_{X} p_{t}(x,y) f (y)\, dm (y) \qquad \textrm{for $m$-a.e.\ $x \in X$.}
\end{equation*}
We say that $(X,d,m,\mathcal{E},\mathcal{F})$ satisfies the \textbf{heat kernel estimates}
\hyperlink{hke}{$\on{HKE}(\Psi)$}, if its heat kernel $\{p_{t}\}_{t>0}$ exists and
there exist $C_{1},c_{1},c_{2},c_{3},\delta\in(0,\infty)$ such that for each $t>0$,
\begin{align}\label{e:uhke}
p_{t}(x,y) &\leq \frac{C_{1}}{m\bigl(B(x,\Psi^{-1}(t))\bigr)} \exp \bigl( -c_{1} \Phi( c_{2}d(x,y), t ) \bigr)
	\qquad \textrm{for $m$-a.e.\ $x,y \in X$,}\\
p_{t}(x,y) &\ge \frac{c_{3}}{m\bigl(B(x,\Psi^{-1}(t))\bigr)}
	\qquad \textrm{for $m$-a.e.\ $x,y\in X$ with $d(x,y) \leq \delta\Psi^{-1}(t)$,}
\label{e:nlhke}
\end{align}
where
\begin{equation} \label{e:defPhi}
\Phi(R,t) := \Phi_{\Psi}(R,t) := \sup_{r>0} \biggl(\frac{R}{r}-\frac{t}{\Psi(r)}\biggr),
	\qquad (R,t)\in[0,\infty)\times(0,\infty).
\end{equation}
\end{definition}

\begin{remark}\label{rmk:defPhi}
\begin{itemize}[label=\textup{(b)},align=left,leftmargin=*]
\item[\textup{(a)}]It easily follows from \eqref{e:reg} that \eqref{e:defPhi} defines
	a lower semi-continuous function $\Phi=\Phi_{\Psi}:[0,\infty)\times(0,\infty)\to[0,\infty)$
	such that for any $R,t\in(0,\infty)$, $\Phi(0,t)=0$, $\Phi(\cdot,t)$ is strictly
	increasing and $\Phi(R,\cdot)$ is strictly decreasing.
\item[\textup{(b)}]If $\beta>1$ and $\Psi$ is given by $\Psi(r)=r^{\beta}$, then an elementary differential
	calculus shows that $\Phi(R,t)=(\beta-1)\beta^{-\frac{\beta}{\beta-1}}(R^{\beta}/t)^{\frac{\beta}{\beta-1}}$
	for any $(R,t)\in[0,\infty)\times(0,\infty)$, in which case the right-hand side of
	\eqref{e:uhke} coincides with that of \eqref{e:HKEbeta}.
\item[\textup{(c)}]If a MMD space $(X,d,m,\mathcal{E},\mathcal{F})$ satisfies \hyperlink{vd}{$\on{VD}$} and
	\hyperlink{hke}{$\on{HKE}(\Psi)$}, then there exists a version of the heat kernel $p_{t}(x,y)$
	which is continuous in $(t,x,y)\in(0,\infty)\times X\times X$; see, e.g., \cite[Theorem 3.1]{BGK}.
\item[\textup{(d)}]If a MMD space $(X,d,m,\mathcal{E},\mathcal{F})$ satisfies the chain condition
	(see Definition \ref{d:chain}-(a) below) in addition to \hyperlink{vd}{$\on{VD}$}
	and \hyperlink{hke}{$\on{HKE}(\Psi)$}, then \eqref{e:nlhke} can be strengthened to
	a lower bound of the same form as \eqref{e:uhke} valid for $m$-a.e.\ $x,y \in X$;
	see, e.g., \cite[Proof of Theorem 6.5]{GT}. Note that this global lower bound
	implies \eqref{e:nlhke} since $\Phi(c_{2}d(x,y),t)$ is less than
	some constant as long as $d(x,y) \le \delta\Psi^{-1}(t)$ by \cite[(5.13)]{GK}.
\end{itemize}
\end{remark}

In fact, \hyperlink{hke}{$\on{HKE}(\Psi)$} itself is not very convenient for analyzing
the energy measures, and there is a characterization of \hyperlink{hke}{$\on{HKE}(\Psi)$} by
the conjunction of two functional inequalities which are more suitable for our purpose, defined as follows.

\begin{definition}[\hypertarget{pi}{$\operatorname{PI}(\Psi)$} and \hypertarget{cs}{$\operatorname{CS}(\Psi)$}]\label{d:PI-CS}
Let $(X,d,m,\mathcal{E},\mathcal{F})$ be a MMD space.
\begin{itemize}[label=\textup{(b)},align=left,leftmargin=*]
\item[\textup{(a)}]We say that $(X,d,m,\mathcal{E},\mathcal{F})$ satisfies the
	\textbf{Poincar\'e inequality} \hyperlink{pi}{$\operatorname{PI}(\Psi)$},
	if there exist constants $C_{P}>0$ and $A\geq 1$ such that 
	for all $(x,r)\in X\times(0,\infty)$ and all $f \in \mathcal{F}$,
	\begin{equation} \tag*{\hyperlink{pi}{$\operatorname{PI}(\Psi)$}}
	\int_{B(x,r)} |f - f_{B(x,r)}|^{2} \,dm \leq C_{P} \Psi(r) \int_{B(x,Ar)}d\Gamma(f,f),
	\end{equation}
	where $f_{B(x,r)}:= m(B(x,r))^{-1} \int_{B(x,r)} f\, dm$.
\item[\textup{(b)}]For open subsets $U,V$ of $X$ with $\overline{U} \subset V$,
	where $\overline{U}$ denotes the closure of $U$ in $X$, we say that
	a function $\varphi \in \mathcal{F}$ is a \emph{cutoff function} for $U \subset V$
	if $0 \leq \varphi \leq 1$ $m$-a.e., $\varphi=1$ $m$-a.e.\ on
	a neighbourhood of $\overline{U}$ and $\supp_{m}[\varphi] \subset V$.
	Then we say that $(X,d,m,\mathcal{E},\mathcal{F})$ satisfies the
	\textbf{cutoff Sobolev inequality} \hyperlink{cs}{$\operatorname{CS}(\Psi)$},
	if there exists $C_{S}>0$ such that the following holds:
	for each $x \in X$ and each $R,r>0$ there exists a cutoff function $\varphi \in \mathcal{F}$
	for $B(x,R) \subset B(x,R+r)$ such that for all $f \in \mathcal{F}$,
	\begin{equation} \tag*{\hyperlink{cs}{$\operatorname{CS}(\Psi)$}}
	\int_{X} f^{2}\, d\Gamma(\varphi,\varphi)
		\leq \frac{1}{8} \int_{B(x,R+r) \setminus B(x,R)} \varphi^{2} \, d\Gamma(f,f)
			+ \frac{C_{S}}{\Psi(r)} \int_{B(x,R+r) \setminus B(x,R)} f^{2}\,dm.
	\end{equation}
	Here and in what follows, we always consider a quasi-continuous $m$-version
	of $f\in\mathcal{F}$, which exists by \cite[Theorem 2.1.3]{FOT} and is unique
	$\mathcal{E}$-q.e.\ (i.e., up to sets of capacity zero) by \cite[Lemma 2.1.4]{FOT},
	so that the values of $f$ are uniquely determined $\Gamma(g,g)$-a.e.\ for each $g\in\mathcal{F}$
	since $\Gamma(g,g)(N)=0$ for any Borel subset $N$ of $X$ of capacity zero by
	\cite[Lemma 3.2.4]{FOT}; see \cite[Section 2.1]{FOT} for the definitions of
	the capacity and the quasi-continuity of functions with respect to a
	regular symmetric Dirichlet form.
\end{itemize}
\end{definition}

\begin{remark}\label{rmk:CS}
The specific constant $\frac{1}{8}$ in the right-hand side of \hyperlink{cs}{$\operatorname{CS}(\Psi)$}
is chosen for the sake of convenience in its use; see, e.g., the proof of Lemma \ref{l:reverse-pi} below.
There is no harm in making this choice because \hyperlink{cs}{$\operatorname{CS}(\Psi)$} is equivalent
to the same condition with $\frac{1}{8}$ replaced by arbitrary $C_{S}'>0$ under
Assumption \ref{a:reg} for $\Psi$ by \cite[Lemma 5.1]{AB}, whose proof is easily
seen to be valid without assuming $\diam(X,d)=\infty$ or \hyperlink{vd}{$\on{VD}$}.
\end{remark}

\begin{theorem}[\cite{BB04,BBK,AB,GHL}; see also {\cite[Theorem 3.2]{Lie}}]\label{t:HKE-VDPICS}
If a MMD space $(X,d,m,\mathcal{E},\mathcal{F})$ satisfies \hyperlink{vd}{$\on{VD}$} and
\hyperlink{hke}{$\on{HKE}(\Psi)$}, then it also satisfies \hyperlink{pi}{$\on{PI}(\Psi)$}
and \hyperlink{cs}{$\on{CS}(\Psi)$} and $(X,d)$ is connected.
\end{theorem}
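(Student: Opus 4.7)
The plan is to deduce the three conclusions---connectedness, \hyperlink{pi}{$\operatorname{PI}(\Psi)$}, and \hyperlink{cs}{$\operatorname{CS}(\Psi)$}---separately; the first two follow relatively directly from the near-diagonal lower bound \eqref{e:nlhke}, while \hyperlink{cs}{$\operatorname{CS}(\Psi)$} is by far the main obstacle.

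For connectedness, the associated Hunt process $\{X_{t}\}$ has continuous sample paths by strong locality, so if $X=U\sqcup V$ were decomposed into non-empty disjoint open (hence clopen) sets, paths starting in $U$ would remain in $U$, and, after passing to the jointly continuous version of $p_{t}$ provided by Remark~\ref{rmk:defPhi}-(c), this would force $p_{t}(x,y)=0$ for all $x\in U$, $y\in V$. Choosing any $u\in U$, $v\in V$ and $t$ large enough that $\delta\Psi^{-1}(t)>d(u,v)$, the near-diagonal lower bound \eqref{e:nlhke} would then give $p_{t}(u,v)>0$, a contradiction.

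For \hyperlink{pi}{$\operatorname{PI}(\Psi)$} I would follow a Saloff-Coste-type heat-semigroup argument. Given $f\in\mathcal{F}$, a ball $B=B(x,r)$, and $t:=\Psi(r)$, the splitting
\begin{equation*}
\int_{B}(f-f_{B})^{2}\,dm \le 2\int_{B}(f-P_{t}f)^{2}\,dm + 2\int_{B}(P_{t}f-f_{B})^{2}\,dm
\end{equation*}
reduces the problem to controlling each term. The first is bounded by $2t\,\mathcal{E}(f,f)=2\Psi(r)\mathcal{E}(f,f)$ via the spectral-theoretic energy dissipation inequality, and the second is controlled using \eqref{e:nlhke} and \hyperlink{vd}{$\on{VD}$}: these force $P_{t}f(y)$ to be, up to a multiplicative constant, an average of $f$ over a ball of radius comparable to $r$, yielding a bound of the form $C\Psi(r)\int_{B(x,Ar)}d\Gamma(f,f)$ after a further application of energy dissipation.

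The main obstacle is \hyperlink{cs}{$\operatorname{CS}(\Psi)$}, which I would establish via the Barlow--Bass capacity-based construction of cutoff functions. Using \hyperlink{vd}{$\on{VD}$} and the upper bound \eqref{e:uhke} one first derives the mean exit-time estimate $\mathbb{E}_{x}[\tau_{B(x,r)}]\asymp\Psi(r)$ together with an upper bound on the capacity of $B(x,R)$ relative to $B(x,R+r)$; one then takes $\varphi$ to be a suitably regularized equilibrium potential for this pair, whose energy is controlled by the capacity. The Caccioppoli-type inequality defining \hyperlink{cs}{$\operatorname{CS}(\Psi)$} then follows by an integration-by-parts computation exploiting the Leibniz rule for $\Gamma(\cdot,\cdot)$. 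The delicate point is obtaining the specific prefactor $\tfrac{1}{8}$ on the right-hand side: as noted in Remark~\ref{rmk:CS}, this is handled by the self-improvement argument of \cite[Lemma~5.1]{AB}, which upgrades an inequality with any finite prefactor to one with arbitrarily small prefactor. Rather than re-deriving the full construction, I would invoke \cite{BB04,BBK,AB,GHL,Lie} directly for the equivalence of \hyperlink{hke}{$\operatorname{HKE}(\Psi)$} with the conjunction of \hyperlink{vd}{$\on{VD}$}, \hyperlink{pi}{$\operatorname{PI}(\Psi)$}, and \hyperlink{cs}{$\operatorname{CS}(\Psi)$}.
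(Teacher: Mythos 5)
The paper does not give its own proof of Theorem \ref{t:HKE-VDPICS}; it is stated as a citation result, so there is no in-paper argument to compare against, only the cited literature and the careful technical correction in Remark \ref{rmk:HKE-VDPICS}-(b). Measured against that, your connectedness argument (continuous sample paths of the diffusion plus the jointly continuous heat-kernel version plus the near-diagonal lower bound \eqref{e:nlhke} for large $t$) is a reasonable and, in outline, correct route, and your treatment of \hyperlink{cs}{$\operatorname{CS}(\Psi)$} ultimately defers to the same references the paper cites.

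Your sketch of \hyperlink{pi}{$\operatorname{PI}(\Psi)$}, however, has a genuine gap. The spectral energy-dissipation inequality you invoke for the first term of the splitting, namely
$\int_{B}(f-P_{t}f)^{2}\,dm \le \int_{X}(f-P_{t}f)^{2}\,dm \le 2t\,\mathcal{E}(f,f)$,
uses the \emph{global} semigroup $\{P_{t}\}_{t>0}$ and therefore produces the global energy $\mathcal{E}(f,f)=\Gamma(f,f)(X)$, not the localized quantity $\Gamma(f,f)(B(x,Ar))$ that \hyperlink{pi}{$\operatorname{PI}(\Psi)$} demands on its right-hand side. No "further application of energy dissipation" turns $\Gamma(f,f)(X)$ into $\Gamma(f,f)(B(x,Ar))$: localization must come from replacing $\{P_{t}\}_{t>0}$ by a semigroup intrinsic to the ball (Neumann or Dirichlet), or from an explicit cutoff-and-bootstrap argument, neither of which appears in your sketch. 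This is precisely the step that the cited proofs in \cite{GHL,Lie} handle with the Neumann heat semigroup on the ball, and it is precisely where the paper's Remark \ref{rmk:HKE-VDPICS}-(b) points out (and repairs) a real subtlety, namely that the pointwise expression $P^{\mathrm{N},B}_{t}\bigl(|f-g(y)|^{2}\bigr)(y)$ appearing in \eqref{e:proof-PI-gap} does not literally make sense because the Neumann heat kernel need not exist, and the comparison must instead be read in the integrated form \eqref{e:proof-PI-correct}. By staying with the global semigroup you avoid that subtlety only at the cost of not obtaining the inequality \hyperlink{pi}{$\operatorname{PI}(\Psi)$} actually asserts; as written, the argument would only yield the weaker bound $\int_{B(x,r)}|f-f_{B(x,r)}|^{2}\,dm \lesssim \Psi(r)\,\mathcal{E}(f,f)$.
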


\begin{remark}\label{rmk:HKE-VDPICS}
\begin{itemize}[label=\textup{(b)},align=left,leftmargin=*]
\item[\textup{(a)}]The converse of Theorem \ref{t:HKE-VDPICS} has been proved in
	\cite[Theorem 1.2]{GHL} under the additional assumption that $(X,d)$ is non-compact:
	\begin{equation*}
	\begin{minipage}{330pt}
	\emph{If a MMD space $(X,d,m,\mathcal{E},\mathcal{F})$ satisfies \hyperlink{vd}{$\on{VD}$},
	\hyperlink{pi}{$\on{PI}(\Psi)$} and \hyperlink{cs}{$\on{CS}(\Psi)$} and $(X,d)$ is connected and
	non-compact, then $(X,d,m,\mathcal{E},\mathcal{F})$ also satisfies \hyperlink{hke}{$\on{HKE}(\Psi)$}}.
	\end{minipage}
	\end{equation*}
	This converse implication should be true even without assuming the non-compactness of $(X,d)$,
	because \cite[Theorem 4.2]{GT} seems to be the only relevant result in \cite{GT,GH,GHL}
	requiring seriously the non-compactness but a suitable modification of it can be
	in fact proved by using \cite[Theorem 6.2]{GK} also in the case where $(X,d)$ is compact.
	Since the converse would not increase the applicability of our main theorem
	(Theorem \ref{t:main}), which assumes \hyperlink{pi}{$\on{PI}(\Psi)$} and
	\hyperlink{cs}{$\on{CS}(\Psi)$} rather than \hyperlink{hke}{$\on{HKE}(\Psi)$},
	we refrain from going into further details of its validity.
\item[\textup{(b)}]There is a (minor but) non-trivial technical gap in the proofs
	of the implication from \hyperlink{vd}{$\on{VD}$} and \hyperlink{hke}{$\on{HKE}(\Psi)$}
	to \hyperlink{pi}{$\on{PI}(\Psi)$} presented in \cite{GHL,Lie}. Specifically,
	both of their proofs utilize the Neumann and Dirichlet heat semigroups
	$\{P^{\mathrm{N},B}_{t}\}_{t>0}$ and $\{P^{\mathrm{D},B}_{t}\}_{t>0}$,
	respectively, on a given ball $B:=B(x,r)$ and the inequality
	\begin{equation}\label{e:proof-PI-gap}
	\int_{B}P^{\mathrm{N},B}_{t}\bigl(|f-g(y)|^{2}\bigr)(y)\,dm(y)
		\geq\int_{B}P^{\mathrm{D},B}_{t}\bigl(|f-g(y)|^{2}\bigr)(y)\,dm(y)
	\end{equation}
	for $f,g\in L^{2}(B,m|_{B})$ and $t\in(0,\infty)$, but
	\emph{the expressions $P^{\mathrm{N},B}_{t}\bigl(|f-g(y)|^{2}\bigr)(y)$ and
	$P^{\mathrm{D},B}_{t}\bigl(|f-g(y)|^{2}\bigr)(y)$ in \eqref{e:proof-PI-gap}
	do not make literal sense}. While the latter can be still interpreted as
	representing $\int_{B}p^{\mathrm{D},B}_{t}(y,z)|f(z)-g(y)|^{2}\,dm(z)$
	with $\{p^{\mathrm{D},B}_{t}\}_{t>0}$ denoting the heat kernel of
	$\{P^{\mathrm{D},B}_{t}\}_{t>0}$, whose existence is easily implied by
	\hyperlink{hke}{$\on{HKE}(\Psi)$}, the former does not allow even this way of
	interpretation because the heat kernel of $\{P^{\mathrm{N},B}_{t}\}_{t>0}$ might
	not exist. In fact, \eqref{e:proof-PI-gap} should rather be interpreted as
	\begin{equation}\label{e:proof-PI-correct}
	\begin{split}
	\int_{B}\bigl(P^{\mathrm{N},B}_{t}(f^{2})-2gP^{\mathrm{N},B}_{t}f&+g^{2}P^{\mathrm{N},B}_{t}\one_{B}\bigr)\,dm\\
		&\geq\int_{B}\bigl(P^{\mathrm{D},B}_{t}(f^{2})-2gP^{\mathrm{D},B}_{t}f+g^{2}P^{\mathrm{D},B}_{t}\one_{B}\bigr)\,dm,
	\end{split}
	\end{equation}
	which follows from the observation that, if additionally $g$ is a simple function on $B$, then
	\begin{align*}
	P^{\mathrm{N},B}_{t}(f^{2})-2gP^{\mathrm{N},B}_{t}f+g^{2}P^{\mathrm{N},B}_{t}\one_{B}
		&=\sum_{a\in g(B)}\one_{g^{-1}(a)}P^{\mathrm{N},B}_{t}\bigl(|f-a\one_{B}|^{2}\bigr)\\
	\geq\sum_{a\in g(B)}\one_{g^{-1}(a)}P^{\mathrm{D},B}_{t}\bigl(|f-a\one_{B}|^{2}\bigr)
		&=P^{\mathrm{D},B}_{t}(f^{2})-2gP^{\mathrm{D},B}_{t}f+g^{2}P^{\mathrm{D},B}_{t}\one_{B}
		\quad \textrm{$m|_{B}$-a.e.}
	\end{align*}
	Now the proofs of \hyperlink{pi}{$\on{PI}(\Psi)$} in \cite[Proof of Theorem 1.2]{GHL}
	and \cite[Proof of Theorem 3.2]{Lie} can be easily justified by replacing
	\eqref{e:proof-PI-gap} with \eqref{e:proof-PI-correct} in their arguments.
	\end{itemize}
\end{remark}

\subsection{Statement of the main result} \label{ssec:result}

The statement of our main result (Theorem \ref{t:main} below) requires some more definitions.
First, the following conditions on the metric are crucial for Theorem \ref{t:main},
especially for its first half on the singularity of the energy measures.

\begin{definition} \label{d:chain}
Let $(X,d)$ be a metric space.
\begin{itemize}[label=\textup{(b)},align=left,leftmargin=*]
\item[\textup{(a)}]For $\varepsilon>0$ and $x,y\in X$, we say that a sequence $\{x_{i}\}_{i=0}^{N}$
	of points in $X$ is an \emph{$\varepsilon$-chain} in $(X,d)$ from $x$ to $y$ if 
	\begin{equation*}
	N \in \mathbb{N}, \quad x_{0}=x, \quad x_{N}=y \quad \textrm{and} \quad d(x_{i},x_{i+1}) < \varepsilon \quad \textrm{for all $i\in\{0,1,\ldots,N-1\}$.}
	\end{equation*}
	Then for $\varepsilon>0$ and $x,y \in X$, define (with the convention that $\inf\emptyset:=\infty$)
	\begin{equation} \label{e:depsilon}
	d_{\varepsilon}(x,y) := \inf\Biggl\{\sum_{i=0}^{N-1} d(x_{i},x_{i+1})
		\Biggm|\textrm{$\{x_{i}\}_{i=0}^{N}$ is an $\varepsilon$-chain in $(X,d)$ from $x$ to $y$}
		\Biggr\}.
	\end{equation}
	We say that $(X,d)$ satisfies the \textbf{chain condition} if there exists $C \geq 1$ such that
	\begin{equation} \label{e:chain}
	d_{\varepsilon}(x,y) \leq C d(x,y)\qquad \textrm{for all $\varepsilon>0$ and all $x,y \in X$.}
	\end{equation}
\item[\textup{(b)}]We say that $(X,d)$ (or $d$) is \textbf{geodesic} if for any $x,y\in X$
	there exists $\gamma:[0,1]\to X$ such that $\gamma(0)=x$, $\gamma(1)=y$
	and $d(\gamma(s),\gamma(t))=|s-t|d(x,y)$ for any $s,t\in[0,1]$.
\end{itemize}
In fact, under the assumption that $B(x,r)$ is relatively compact in $X$ for all
$(x,r)\in X\times(0,\infty)$, $(X,d)$ satisfies the chain condition if and only if
$d$ is bi-Lipschitz equivalent to a geodesic metric $\rho$ on $X$;
see Proposition \ref{p:cc} in Appendix \ref{ssec:chain-geodesic}.
\end{definition}

The following definition is standard in studying Gaussian heat kernel estimates, i.e.,
\eqref{e:uhke} with $\Psi(r)=r^{2}$ and the matching lower estimate of $p_t(x,y)$.

\begin{definition}\label{d:dint}
Let $(X,d,m,\mathcal{E},\mathcal{F})$ be a MMD space. We define its
\textbf{intrinsic metric} $d_{\on{int}}:X\times X\to[0,\infty]$ by
\begin{equation}\label{e:dint}
d_{\on{int}}(x,y) := \sup \bigl\{f(x) -f(y) \bigm|
	\textrm{$f \in \mathcal{F}_{\on{loc}} \cap \mathcal{C}(X)$, $\Gamma(f,f) \leq m$} \bigr\},
\end{equation}
where
\begin{equation}\label{e:Floc}
\mathcal{F}_{\loc} := \Biggl\{ f \Biggm|
	\begin{minipage}{255pt}
	$f$ is an $m$-equivalence class of $\mathbb{R}$-valued Borel measurable functions
	on $X$ such that $f \one_{V} = f^{\#} \one_{V}$ $m$-a.e.\ for some $f^{\#}\in\mathcal{F}$
	for each relatively compact open subset $V$ of $X$
	\end{minipage}
	\Biggr\}
\end{equation}
and the energy measure $\Gamma(f,f)$ of $f\in\mathcal{F}_{\loc}$ associated with
$(X,d,m,\mathcal{E},\mathcal{F})$ is defined as the unique Borel measure on $X$
such that $\Gamma(f,f)(A)=\Gamma(f^{\#},f^{\#})(A)$ for any relatively compact
Borel subset $A$ of $X$ and any $V,f^{\#}$ as in \eqref{e:Floc} with $A\subset V$;
note that $\Gamma(f^{\#},f^{\#})(A)$ is independent of a particular choice of such $V,f^{\#}$
by \eqref{e:Cauchy-Schwarz-energy}, \eqref{e:bilinear-energy} and \cite[Corollary 3.2.1]{FOT}.
\end{definition}

In the literature on Gaussian heat kernel estimates it is customary to assume that the
intrinsic metric $d_{\on{int}}$ is a complete metric on $X$ compatible with the original
topology of $X$, in which case it sounds natural in view of \eqref{e:dint} to guess that
the symmetric measure $m$ and the family of energy measures $\Gamma(f,f)$ should be
``mutually absolutely continuous''. The following definition due to \cite{Hin10}
rigorously formulates the notion of such a measure.

\begin{definition}[{\cite[Definition 2.1]{Hin10}}]\label{d:minimal-energy-dominant}
Let $(X,d,m,\mathcal{E},\mathcal{F})$ be a MMD space. A $\sigma$-finite Borel measure
$\nu$ on $X$ is called a \textbf{minimal energy-dominant measure}
of $(\mathcal{E},\mathcal{F})$ if the following two conditions are satisfied:
\begin{itemize}[label=\textup{(ii)},align=left,leftmargin=*]
\item[(i)](Domination) For every $f \in \mathcal{F}$, $\Gamma(f,f) \ll \nu$.
\item[(ii)](Minimality) If another $\sigma$-finite Borel measure $\nu'$
	on $X$ satisfies condition (i) with $\nu$ replaced
	by $\nu'$, then $\nu \ll \nu'$.
\end{itemize}
Note that by \cite[Lemmas 2.2, 2.3 and 2.4]{Hin10}, a minimal energy-dominant measure of
$(\mathcal{E},\mathcal{F})$ always exists and is precisely a $\sigma$-finite Borel measure
$\nu$ on $X$ such that for each Borel subset $A$ of $X$, $\nu(A)=0$ if and only if
$\Gamma(f,f)(A)=0$ for all $f\in\mathcal{F}$.
\end{definition}

Now we can state the main theorem of this paper, which asserts that the conjunction of
\hyperlink{vd}{$\on{VD}$}, \hyperlink{pi}{$\operatorname{PI}(\Psi)$} and
\hyperlink{cs}{$\operatorname{CS}(\Psi)$} implies the singularity and
the absolute continuity of the energy measures, if $\Psi(r)$ decays as $r\downarrow 0$
sufficiently faster than $r^{2}$ and at most as fast as $r^{2}$, respectively.
We also describe what the intrinsic metric $d_{\on{int}}$ looks like in each case.
Remember that the assumption of \hyperlink{vd}{$\on{VD}$}, \hyperlink{pi}{$\operatorname{PI}(\Psi)$}
and \hyperlink{cs}{$\operatorname{CS}(\Psi)$} in the following theorem can be replaced
with that of \hyperlink{vd}{$\on{VD}$} and \hyperlink{hke}{$\on{HKE}(\Psi)$} by virtue
of Theorem \ref{t:HKE-VDPICS} and that $\diam(X,d)\in(0,\infty]$ by $\#X\geq 2$.

\begin{theorem} \label{t:main}
Let $(X,d,m,\mathcal{E},\mathcal{F})$ be a MMD space satisfying \hyperlink{vd}{$\on{VD}$},
\hyperlink{pi}{$\on{PI}(\Psi)$} and \hyperlink{cs}{$\on{CS}(\Psi)$}.
\begin{itemize}[label=\textup{(b)},align=left,leftmargin=*]
\item[\textup{(a)}]\textup{(Singularity)} If $(X,d)$ satisfies the chain condition and
	\begin{equation}\label{e:case-sing}
	\liminf_{\lambda \to \infty}\liminf_{r \downarrow 0} \frac{\lambda^2 \Psi(r/\lambda)}{\Psi(r)} =0,
	\end{equation}
	then $\Gamma(f,f) \perp m$ for all $f \in \mathcal{F}$.
	In this case, the intrinsic metric $d_{\on{int}}$ is identically zero.
\item[\textup{(b)}]\textup{(Absolute continuity)} If
	\begin{equation}\label{e:case-ac}
	\limsup_{r \downarrow 0} \frac{\Psi (r)}{r^{2}} > 0,
	\end{equation}
	then $m$ is a minimal energy-dominant measure of $(\mathcal{E},\mathcal{F})$,
	and in particular $\Gamma(f,f) \ll m$ for all $f \in \mathcal{F}$. In this case,
	the intrinsic metric $d_{\on{int}}$ is a geodesic metric on $X$ and
	there exist $r_{1},r_{2}\in(0,\diam(X,d))$ and $C_{1},C_{2}\geq 1$ such that
	\begin{align}\label{e:ge2}
	C_{1}^{-1} r^{2} \leq \Psi(r) &\leq C_{1} r^{2} &&\textrm{for all $r\in(0,r_{1})$,}\\
	C_{2}^{-1} d(x,y) \leq d_{\on{int}}(x,y) &\leq C_{2} d(x,y)
		&&\textrm{for all $x,y \in X$ with $d(x,y) \wedge d_{\on{int}}(x,y) < r_{2}$.}
	\label{e:bi-Lipschitz-d-dint}
	\end{align}
	Furthermore if additionally $(X,d)$ satisfies the chain condition, then $d_{\on{int}}$
	is bi-Lipschitz equivalent to $d$, that is, \eqref{e:bi-Lipschitz-d-dint} with
	$r_{2}=\infty$ holds for some $C_{2}\geq 1$.
\end{itemize}
\end{theorem}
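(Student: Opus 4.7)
The two halves require rather different techniques, so I attack them separately.

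\textbf{Part (a).} My plan rests on a local principle: if $B \subset X$ is an open ball and $f \in \mathcal{F}_{\on{loc}} \cap \mathcal{C}(X)$ satisfies the averaged density bound $\Gamma(f,f)(B') \leq M\,m(B')$ for all sub-balls $B' \subset B$, then $f$ is constant on $\tfrac{1}{2}B$. To prove this, a dyadic telescoping of \hyperlink{pi}{$\on{PI}(\Psi)$} together with \hyperlink{vd}{$\on{VD}$} gives $|f(x) - f(y)|^{2} \leq CM\Psi(Cd(x,y))$ for $x, y$ close inside $B$. Applying the chain condition to construct an $(r/\lambda)$-chain of length $\leq C\lambda$ from $x$ to $y$ and summing gives the scaling bootstrap $|f(x)-f(y)|^{2} \leq CM\lambda^{2}\Psi(Cd(x,y)/\lambda)$. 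Combined with the general consequence $\Psi(r) \leq Cr^{2}$ of \hyperlink{pi}{$\on{PI}(\Psi)$} and \hyperlink{cs}{$\on{CS}(\Psi)$} at small scales, the hypothesis \eqref{e:case-sing} drives this bound to zero. The principle with $M = 1$ on arbitrarily large $B$ gives $d_{\on{int}} \equiv 0$. For the singularity, assume $\Gamma(f,f) \not\perp m$ for some $f \in \mathcal{F}$; Radon--Nikodym gives $\Gamma(f,f) = gm + \Gamma_{s}$ with $\Gamma_{s} \perp m$ and $g > 0$ on some Borel set of positive $m$-measure. Picking a joint Lebesgue point $x_{0}$ of $g$ and $\Gamma_{s}$ yields a small ball $B(x_{0}, r)$ on which the averaged density bound holds with $M = 2g(x_{0})$; the principle forces $f$ constant on $B(x_{0}, r/2)$, whence by strong locality $\Gamma(f,f) = 0$ there, contradicting $g(x_{0}) > 0$.

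\textbf{Part (b).} I would proceed in three stages. First, the general consequence $\Psi(r) \leq Cr^{2}$ of \hyperlink{pi}{$\on{PI}(\Psi)$} and \hyperlink{cs}{$\on{CS}(\Psi)$} together with \eqref{e:case-ac} and the regularity \eqref{e:reg} propagates the matching lower bound $\Psi(r) \geq cr^{2}$ from the realizing sequence to all small $r$, yielding \eqref{e:ge2}. Second, the telescoped H\"older estimate from Part (a), now saturated as a Lipschitz bound under $\Psi \asymp r^{2}$, gives $d_{\on{int}}(x,y) \leq C d(x,y)$; conversely, construct truncated distance-type functions $f_{x_{0}}(y) := d(x_{0},y) \wedge r$, which by a Rademacher-type argument under \hyperlink{pi}{$\on{PI}(\Psi)$} with $\Psi \asymp r^{2}$ belong to $\mathcal{F}_{\on{loc}}$ with $\Gamma(f_{x_{0}}, f_{x_{0}}) \leq C m$ and hence $d_{\on{int}}(x_{0}, y) \geq c d(x_{0}, y)$ locally. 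The length-metric nature of $d_{\on{int}}$ together with completeness and bi-Lipschitz equivalence to $d$ gives geodesicity via Hopf--Rinow, and under the chain condition the local bi-Lipschitz comparison extends globally. Third, \hyperlink{pi}{$\on{PI}(\Psi)$} applied to the same $f_{x_{0}}$ gives the matching lower bound $\Gamma(f_{x_{0}}, f_{x_{0}})(B(x_{0}, r/2)) \geq c\,m(B(x_{0}, r/2))$; summing over a countable dense $\{x_{n}\}$ with summable weights produces a measure equivalent to $m$ on compact sets, so $m$ itself is minimal energy-dominant via Definition \ref{d:minimal-energy-dominant}.

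\textbf{Main obstacle.} The trickiest step is formulating the local principle in Part (a) and pushing through the iterated scaling bootstrap from only an \emph{averaged} density bound: careful control of scales is needed so that \hyperlink{pi}{$\on{PI}(\Psi)$} can be applied at all sub-scales within a mother ball, and the bound $\Psi(r)/r^{2} = O(1)$ --- itself requiring a nontrivial argument from \hyperlink{pi}{$\on{PI}(\Psi)$} and \hyperlink{cs}{$\on{CS}(\Psi)$} --- is essential to translate \eqref{e:case-sing} into effective decay. In Part (b), the technical heart is the Rademacher-type lifting of Lipschitz functions to $\mathcal{F}_{\on{loc}}$ with controlled energy measures, without which the definition of $d_{\on{int}}$ gives only trivial lower bounds.
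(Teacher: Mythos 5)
Your proposal for part (a) has a genuine gap at its core: the ``local principle'' you invoke---that $\Gamma(f,f)(B')\leq Mm(B')$ for all sub-balls $B'\subset B$ forces $f$ to be constant on $\frac12 B$---is not established by the argument you sketch, and in fact cannot be proved from \hyperlink{pi}{$\on{PI}(\Psi)$}, chaining, and \eqref{e:case-sing} alone. Your chaining bound $|f(z_1)-f(z_2)|^2\lesssim M\lambda^2\Psi\bigl(Cd(z_1,z_2)/\lambda\bigr)$ is correct, but to conclude constancy you need $\lambda^2\Psi(r_0/\lambda)\to 0$ as $\lambda\to\infty$ for a \emph{fixed} $r_0=Cd(z_1,z_2)$, i.e.\ $\Psi(s)/s^2\to 0$ as $s\downarrow 0$, which is \eqref{e:case-nonGauss} and not \eqref{e:case-sing}. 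The hypothesis \eqref{e:case-sing} is an iterated $\liminf$ that only guarantees the ratio $\lambda^2\Psi(r/\lambda)/\Psi(r)$ is small along a sequence $r\downarrow 0$ \emph{depending on} $\lambda$; it gives no information at a fixed scale. (And even under \eqref{e:case-nonGauss}, e.g.\ $\Psi(r)=r^2/\log(1/r)$, the resulting ``log-Lipschitz'' modulus does not force constancy.) What is actually missing is the \emph{lower} bound on the variance, which in the paper comes from the reverse Poincar\'e inequality (Lemma \ref{l:reverse-pi}, a consequence of \hyperlink{cs}{$\on{CS}(\Psi)$}) applied to functions that are \emph{$\mathcal{E}$-harmonic}. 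The paper's contradiction is a two-sided squeeze: PI + chaining gives an upper bound $\lesssim\lambda^2\Psi(r/\lambda)\,m(B(x,r))$, the reverse PI + harmonicity + Lebesgue-point density give a lower bound $\gtrsim\Psi(r)\,m(B(x,r))$, and the two together force $\lambda^2\Psi(r/\lambda)/\Psi(r)\gtrsim 1$ for all $\lambda$ and small $r$, contradicting \eqref{e:case-sing}. Your scheme skips the reverse PI and the reduction to harmonic functions entirely---there is no approximation of a general $f\in\mathcal{F}$ by piecewise $\mathcal{E}$-harmonic functions (Propositions \ref{p:approx-harmonic} and \ref{p:approx})---so the singularity for general $f$ is not reached. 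The conclusion $d_{\on{int}}\equiv 0$ is then obtained the other way round in the paper (singularity $\Rightarrow$ $\Gamma(f,f)=0$ whenever $\Gamma(f,f)\leq m$), not as a free-standing consequence of PI and chaining.

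For part (b), the overall shape is right (propagate $\Psi\asymp r^2$ via \cite[Corollary 1.10]{Mur}, get distance functions into $\mathcal{F}$ with controlled energy, compare $d_{\on{int}}$ and $d$, use \cite[Theorem 1]{Stu95b} for geodesicity), but there is a substantive omission: you argue for the \emph{minimality} of $m$ but never establish the \emph{domination} $\Gamma(f,f)\ll m$ for all $f\in\mathcal{F}$, which is the heart of ``$m$ is minimal energy-dominant.'' Knowing that each $f_{x,r}$ has $\Gamma(f_{x,r},f_{x,r})\leq C m$ only gives absolute continuity for a small family, not for a generic $f$. The paper needs Lemma \ref{l:lip} (Lipschitz partition of unity built from the $f_{x,r}$) and Proposition \ref{p:dom} (approximate $f\in\mathcal{F}\cap\mathcal{C}_{\mathrm{c}}(X)$ by the mollified functions $f_n=\sum_z f_{B(z,1/n)}\varphi_z$, control $\mathcal{E}(f_n,f_n)$ via PI and the Banach--Saks theorem, and pass to the limit with Lemma \ref{l:cont-ac-sing}-(a)); none of this appears in your outline. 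Also, the ``Rademacher-type'' lifting you allude to is vague and, as the paper shows, what actually gets $f_{x,r}\in\mathcal{F}$ is a Ces\`aro/Banach--Saks limit of \hyperlink{cs}{$\on{CS}(\Psi)$}-cutoff functions, not a Rademacher differentiation argument.
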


\begin{remark} \label{rmk:main}
If $\Psi(r)=r^{\beta}$ for some $\beta>1$, then \eqref{e:case-sing}
is equivalent to $\beta>2$ and \eqref{e:case-ac} is equivalent to $\beta \leq 2$.
For general $\Psi$, however, the conditions \eqref{e:case-sing} and \eqref{e:case-ac}
are not complementary to each other since there are examples of $\Psi$ satisfying
Assumption \ref{a:reg} but not either of \eqref{e:case-sing} and \eqref{e:case-ac}; indeed,
for each $k\in\mathbb{N}$, the homeomorphism $\Psi_{k}:[0,\infty)\to[0,\infty)$ given by
\begin{equation} \label{e:r2logn}
\Psi_{k}(r) := r^{2} \eta_{0}^{\circ k}(r\wedge 1), \qquad \textrm{where} \quad
	\eta_{0}(r) := \frac{1}{\log(e-1+r^{-1})} \quad (\eta_{0}(0):=0)
\end{equation}
and $\eta_{0}^{\circ k}$ denotes the $k$-fold composition of $\eta_{0}:[0,\infty)\to[0,\infty)$,
is such an example. In fact, for a large class of such $\Psi$ including $\Psi_{k}$ as in
\eqref{e:r2logn}, it is possible to construct a MMD space which is equipped with a geodesic
metric and satisfies \hyperlink{vd}{$\on{VD}$} and \hyperlink{hke}{$\on{HKE}(\Psi)$},
by considering a class of fractals obtained by modifying the construction of the
scale irregular Sierpi\'{n}ski gaskets in Section \ref{s:sisg} below in the
following manner suggested by Barlow in \cite{Bar19}:

For each $\bm{l}=(l_{n})_{n=1}^{\infty}\in(\mathbb{N}\setminus\{1,2,3,4\})^{\mathbb{N}}$,
we define the \emph{$2$-dimensional level-$\bm{l}$ thin scale irregular Sierpi\'{n}ski gasket}
$\hat{K}^{\bm{l}}$ by \eqref{e:sisg} with $N=2$ and with $S_{l}$ in Section \ref{s:sisg} replaced by
\begin{equation*}
\hat{S}_{l}:=\bigl\{(i_{1},i_{2})\in(\mathbb{N}\cup\{0\})^{2}
	\bigm|\textrm{$i_{1}+i_{2}\leq l-1$, $i_{1}i_{2}(l-1-i_{1}-i_{2})=0$}\bigr\},
\end{equation*}
i.e., with the ways of cell subdivision in Section \ref{s:sisg} modified so as to
keep only the cells along the boundary of the triangle at each subdivision step.
Then we can define in exactly the same way as Section \ref{s:sisg} a canonical MMD space
$(\hat{K}^{\bm{l}},\hat{d}_{\bm{l}},\hat{m}_{\bm{l}},\hat{\mathcal{E}}^{\bm{l}},\hat{\mathcal{F}}_{\bm{l}})$
over $\hat{K}^{\bm{l}}$ with the metric $\hat{d}_{\bm{l}}$ geodesic, and furthermore it can
be shown, \emph{regardless of the possible unboundedness of $\bm{l}=(l_{n})_{n=1}^{\infty}$},
to satisfy \hyperlink{vd}{$\on{VD}$}, \hyperlink{hke}{$\on{HKE}(\hat{\Psi}_{\bm{l}})$}
for a homeomorphism $\hat{\Psi}_{\bm{l}}:[0,\infty)\to[0,\infty)$ defined explicitly in terms of
$\bm{l}$, and $\hat{\Gamma}_{\bm{l}}(f,f) \perp \hat{m}_{\bm{l}}$ for all $f\in\hat{\mathcal{F}}_{\bm{l}}$
for its associated energy measures $\hat{\Gamma}_{\bm{l}}(\cdot,\cdot)$. Now it is possible to prove that
for each homeomorphism $\eta:[0,1]\to[0,1]$ satisfying $\eta(0)=0$ and the (seemingly mild) condition that
\begin{equation} \label{e:eta}
\sum_{n=0}^{\infty} \frac{ \eta^{-1}(2^{-n-1}) }{ \eta^{-1}(2^{-n}) } < \infty,
\end{equation}
which holds, e.g., for $\eta_{0}^{\circ k}$ as in \eqref{e:r2logn} for any $k\in\mathbb{N}$,
there exist $\bm{l}_{\eta}\in(\mathbb{N}\setminus\{1,2,3,4\})^{\mathbb{N}}$ and $C_{\eta}\geq 1$ such that
\begin{equation} \label{e:Psi-eta}
C_{\eta}^{-1} \hat{\Psi}_{\bm{l}_{\eta}}(r) \leq \Psi_{\eta}(r):=r^{2}\eta(r\wedge 1)
	\leq C_{\eta} \hat{\Psi}_{\bm{l}_{\eta}}(r) \qquad \textrm{for any $r\in[0,\infty)$.}
\end{equation}
The details of the results stated in this paragraph will appear in the forthcoming paper \cite{Kaj20b}.

Since the decay rate of $\hat{\Psi}_{\bm{l}}(r)$ as $r\downarrow 0$ can be made arbitrarily
close to that of $r^{2}$ by taking suitable $\bm{l}\in(\mathbb{N}\setminus\{1,2,3,4\})^{\mathbb{N}}$,
e.g., $\bm{l}_{\eta}$ as in \eqref{e:Psi-eta} with $\eta=\eta_{0}^{\circ k}$
for arbitrarily large $k\in\mathbb{N}$ yet the associated MMD space
$(\hat{K}^{\bm{l}},\hat{d}_{\bm{l}},\hat{m}_{\bm{l}},\hat{\mathcal{E}}^{\bm{l}},\hat{\mathcal{F}}_{\bm{l}})$
still satisfies $\hat{\Gamma}_{\bm{l}}(f,f) \perp \hat{m}_{\bm{l}}$ for all $f\in\hat{\mathcal{F}}_{\bm{l}}$,
it is natural to expect that we would always have $\Gamma(f,f) \perp m$ for all $f \in \mathcal{F}$
under the assumptions of Theorem \ref{t:main} unless \eqref{e:case-ac} holds.
Namely, we have the following conjecture:
\end{remark}

\begin{conjecture}[Energy measure singularity dichotomy] \label{conj:main}
Let $(X,d,m,\mathcal{E},\mathcal{F})$ be a MMD space satisfying \hyperlink{vd}{$\on{VD}$},
\hyperlink{pi}{$\on{PI}(\Psi)$} and \hyperlink{cs}{$\on{CS}(\Psi)$}, and assume further
that $(X,d)$ satisfies the chain condition and that
\begin{equation}\label{e:case-nonGauss}
\lim_{r \downarrow 0} \frac{\Psi (r)}{r^{2}} = 0.
\end{equation}
Then $\Gamma(f,f) \perp m$ for all $f \in \mathcal{F}$.
\end{conjecture}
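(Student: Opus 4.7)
The plan is to split the argument into two parts paralleling Theorem \ref{t:main}-(a): first establish $d_{\on{int}} \equiv 0$ under \eqref{e:case-nonGauss} alone, and then upgrade this to $\Gamma(f,f)\perp m$ for every $f \in \mathcal{F}$ by a localization argument at a Lebesgue point of a putative absolutely continuous part. The first part should follow from a chain-plus-Poincar\'e telescoping argument that does not require the scaling self-improvement \eqref{e:case-sing}, while the second is where the genuine new difficulty lies.

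\textbf{Step 1: $d_{\on{int}} \equiv 0$.} Let $g \in \mathcal{F}_{\on{loc}} \cap \mathcal{C}(X)$ with $\Gamma(g,g) \leq m$ and fix $x,y \in X$. The chain condition yields, for every $\varepsilon > 0$, an $\varepsilon$-chain $\{x_{i}\}_{i=0}^{N}$ with $N \leq C d(x,y)/\varepsilon$ and $\sum_{i} d(x_{i},x_{i+1}) \leq C d(x,y)$. Iterating \hyperlink{pi}{$\on{PI}(\Psi)$} along the dyadic balls $B(x_{i}, 2^{-k}\varepsilon)$ and summing the bounds $\Psi(2^{-k}\varepsilon)^{1/2}$ over $k\geq 0$ (the series is geometric by $\beta_{0} > 1$ in \eqref{e:reg}) produces $|g(x_{i}) - g_{B(x_{i},\varepsilon)}| \lesssim \Psi(\varepsilon)^{1/2}$; a further application of \hyperlink{pi}{$\on{PI}(\Psi)$} to compare $g_{B(x_{i},\varepsilon)}$ with $g_{B(x_{i+1},\varepsilon)}$ (using that both balls lie in $B(x_{i},2\varepsilon)$ and \hyperlink{vd}{$\on{VD}$}) gives $|g(x_{i})-g(x_{i+1})| \lesssim \Psi(\varepsilon)^{1/2}$. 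Summing along the chain yields $|g(x)-g(y)| \lesssim d(x,y)\,\Psi(\varepsilon)^{1/2}/\varepsilon$, which tends to $0$ as $\varepsilon \downarrow 0$ by \eqref{e:case-nonGauss}.

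\textbf{Step 2 and the main obstacle.} Assume for contradiction that some $f \in \mathcal{F}$ satisfies $\Gamma(f,f) = \rho\, dm + \mu_{s}$ with $\mu_{s} \perp m$ and $\int \rho\, dm > 0$. Choose $M<\infty$ so that $A_{M} := \{\rho \leq M\}$ has positive $m$-measure, and pick a common Lebesgue density point $x_{0}$ of $A_{M}$ and of a level set $\{|\rho - \rho_{0}| < \delta\}$ for some $0 < \rho_{0} \leq M$ and small $\delta > 0$ (such points exist by standard differentiation theorems on $(X,d,m)$ under \hyperlink{vd}{$\on{VD}$}). I would then try to construct
\[
h := c\,\varphi\bigl((f\wedge M_{+})\vee M_{-} - a\bigr),
\]
where $\varphi$ is a cutoff function for $B(x_{0},r_{0}/2) \subset B(x_{0},r_{0})$ supplied by \hyperlink{cs}{$\on{CS}(\Psi)$}, $r_{0}$ is so small that $\rho$ is close to $\rho_{0}$ on $B(x_{0},r_{0})$ in $L^{1}$-average and on most of $A_{M} \cap B(x_{0},r_{0})$, $M_{\pm}$ are appropriate truncation levels, $a$ centers the function, and $c = c(M,\rho_{0},\delta)$. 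The goal is $\Gamma(h,h) \leq m$ with $h$ non-constant near $x_{0}$, contradicting Step 1. The Leibniz rule for $\Gamma$, together with \hyperlink{cs}{$\on{CS}(\Psi)$} and \hyperlink{pi}{$\on{PI}(\Psi)$}, would control the total energy $\mathcal{E}(h,h)$ of $h$; the main obstacle is that the cutoff term $c^{2}(f-a)^{2}\,d\Gamma(\varphi,\varphi)$ is supported on the annulus $B(x_{0},r_{0})\setminus B(x_{0},r_{0}/2)$, where $\rho$ is only controlled in $L^{1}$-average and may concentrate on a small subset on which $\Gamma(\varphi,\varphi)$ is large, so the measure-wise inequality $\Gamma(h,h) \leq m$ can fail even when the total mass is small. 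Bridging this gap will likely require an iterative multi-scale construction that averages cutoffs over many shells $B(x_{0}, r_{0}^{(k)})$ on which $\rho$ is uniformly close to $\rho_{0}$, in the spirit of Hino's good-cover argument in \cite{Hin05}, combined with a quantitative refinement of Step 1 that tolerates $\Gamma(h,h) \leq m$ outside a controlled small set. Carrying this out for $\Psi$ satisfying only \eqref{e:case-nonGauss}, without access to the uniform scaling self-improvement encoded in \eqref{e:case-sing}, is what I expect to be the hardest part.
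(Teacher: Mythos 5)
The statement is Conjecture \ref{conj:main}, which the paper explicitly leaves open (see Remark \ref{rmk:main}); there is no paper proof to compare against, and the right question is whether your argument is complete. By your own admission, it is not.

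Your Step~1 is correct. Combining \cite[Lemma 2.4]{Mur} (or, equivalently, the dyadic telescoping argument you sketch from \hyperlink{pi}{$\on{PI}(\Psi)$} and \hyperlink{vd}{$\on{VD}$}) with the chain condition and \eqref{e:case-nonGauss} does yield $d_{\on{int}} \equiv 0$; this is essentially the proof of Lemma~\ref{l:pi-dist} run without invoking \eqref{e:ge2}, together with the observation that $\sqrt{\Psi(\varepsilon)}/\varepsilon \to 0$. But be clear that this conclusion is \emph{strictly weaker} than $\Gamma(f,f) \perp m$: it only forbids non-constant $g \in \mathcal{F}_{\on{loc}} \cap \mathcal{C}(X)$ satisfying the \emph{pointwise} bound $\Gamma(g,g) \leq m$, whereas a non-trivial absolutely continuous part of $\Gamma(f,f)$ may have an unbounded Radon--Nikodym density and so does not directly produce such a $g$. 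In the proof of Theorem~\ref{t:main}-(a), $d_{\on{int}} \equiv 0$ is a \emph{consequence} of singularity, not a step toward it, so Step~1 does not substantially advance the conjecture.

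Your Step~2 correctly identifies the crux, and the gap is real. The paper's Proposition~\ref{p:harmonic} obtains a contradiction by comparing the Poincar\'e upper bound \eqref{e:hm3} on the variance of a harmonic function over $B(x,r)$, of order $\lambda^{2}\Psi(r/\lambda)\,m(B(x,r))$, with the reverse-Poincar\'e lower bound \eqref{e:hm4}, of order $\Psi(r)\,m(B(x,r))$. This forces $\lambda^{2}\Psi(r/\lambda)/\Psi(r) \gtrsim 1$, which contradicts \eqref{e:case-sing} but is entirely consistent with, e.g., $\Psi(r) = r^{2}/\log(e-1+r^{-1})$, for which $\lambda^{2}\Psi(r/\lambda)/\Psi(r) \to 1$ as $r \downarrow 0$ for every fixed $\lambda$, even though \eqref{e:case-nonGauss} holds. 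So under \eqref{e:case-nonGauss} alone the single-scale Lebesgue-point comparison produces no contradiction, and the obstacle you flag about $(f-a)^{2}\,d\Gamma(\varphi,\varphi)$ being controlled only in total mass (not pointwise) is a genuine one. Per Remark~\ref{rmk:main}, the only evidence the paper offers for the conjecture comes from explicit borderline examples (the thin scale irregular gaskets) whose verification relies on their concrete structure rather than the abstract hypotheses; your proposal does not close that gap and does not claim to.
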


\subsection{Outline of the proof}\label{ssec:outline}
The proofs of Theorem \ref{t:main}-(a) and Theorem \ref{t:main}-(b)
are completed in Sections \ref{s:sing} and \ref{s:ac}, respectively.

In Section \ref{s:sing}, we reduce the proof of Theorem \ref{t:main}-(a) to the case of
harmonic functions by approximating an arbitrary function in $\mathcal{F}$ by ``piecewise
harmonic functions'' --- see Propositions \ref{p:approx-harmonic} and \ref{p:approx}.
The proof proceeds by contradiction. If the energy measure $\Gamma(h,h)$ of a harmonic
function $h$ has a non-trivial absolutely continuous part with respect to the symmetric
measure $m$, then by Lebesgue's differentiation theorem we can approximate $\Gamma(h,h)$
by a constant multiple of $m$ locally at sufficiently many scales --- see Lemma \ref{l:zoom}.
Then we can estimate the variances of $h$
on small balls from above by using \hyperlink{pi}{$\operatorname{PI}(\Psi)$} and
from below by \hyperlink{cs}{$\operatorname{CS}(\Psi)$} and the harmonicity of $h$
--- see \eqref{e:hm3} and \eqref{e:hm4}. The conjunction of these upper and lower
bounds contradicts the assumption \eqref{e:case-sing} on $\Psi$.

In Section \ref{s:ac}, we prove Theorem \ref{t:main}-(b). We first deduce \eqref{e:ge2}
from the assumption \eqref{e:case-ac} and a recent result \cite[Corollary 1.10]{Mur}
by the second-named author (Lemma \ref{l:ge2}). We next show that for small enough $r$,
the function $(r-d(x,\cdot))^{+}$ belongs to $\mathcal{F}$ and has energy measure
absolutely continuous with respect to the symmetric measure $m$ (Lemma \ref{l:dist}).
Then we approximate any function in $\mathcal{F}$ by using combinations of functions of
the form $(r-d(x,\cdot))^{+}$ --- see Lemma \ref{l:lip} and Proposition \ref{p:dom}. The
minimality of $m$ follows from \hyperlink{pi}{$\operatorname{PI}(\Psi)$} and Lemma \ref{l:dist}
(Proposition \ref{p:min}), the finiteness of the intrisic metric $d_{\on{int}}$ from
\eqref{e:bi-Lipschitz-d-dint} and \cite[Lemma 2.2]{Mur} (Proposition \ref{p:bilip}),
and we finally conclude the bi-Lipschitz equivalence of $d_{\on{int}}$ to $d$
(Proposition \ref{p:bilip}) by combining \eqref{e:bi-Lipschitz-d-dint}, the chain condition
for $(X,d)$ and the geodesic property of $d_{\on{int}}$ proved in \cite[Theorem 1]{Stu95b}.

\begin{notation}
In the following, we will use the notation $A \lesssim B$ for quantities $A$ and $B$
to indicate the existence of an implicit constant $C > 0$ depending on some
inessential parameters such that $A \le CB$.
\end{notation}

\section{Singularity} \label{s:sing}
In this section, we give the proof of Theorem \ref{t:main}-(a), i.e., the singularity
of the energy measures under the assumption \eqref{e:case-sing}. We start with a lemma
describing the local behavior of a Radon measure in relation to another with \hyperlink{vd}{$\on{VD}$}.
\begin{lemma} \label{l:zoom}
Let $(X,d,m)$ be a metric measure space satisfying \hyperlink{vd}{$\on{VD}$}, and
let $\nu$ be a Radon measure on $X$, i.e., a Borel measure on $X$ which is finite on
any compact subset of $X$. Let $\nu=\nu_{a} + \nu_{s}$ denote the Lebesgue decomposition
of $\nu$ with respect to $m$, where $\nu_{a} \ll m$ and $\nu_{s} \perp m$.
Let $\delta_{0} \in (0,1)$. Then for $m$-a.e.\ $x \in \bigl\{z\in X \bigm| \frac{d\nu_{a}}{dm}(z)>0\bigr\}$,
there exists $r_{0}=r_{0}(x,\delta_{0})>0$ such that for every $r\in(0,r_{0})$,
every $\delta \in [\delta_{0},1]$ and every $y \in B(x,r)$,
\begin{equation} \label{e:rn}
\frac{1}{2} \frac{d\nu_{a}}{dm}(x)
	\leq \frac{\nu(B(y,\delta r))}{m(B(y,\delta r))}
	\leq 2 \frac{d\nu_{a}}{dm}(x).
\end{equation}
\end{lemma}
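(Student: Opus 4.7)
The plan is to reduce the claim, which concerns the uncentered balls $B(y,\delta r)$ for $y\in B(x,r)$ and $\delta\in[\delta_{0},1]$, to the standard pointwise Lebesgue differentiation theorem applied at the center $x$, absorbing the change of center and radius into a multiplicative constant depending only on $C_{D}$ and $\delta_{0}$ via \hyperlink{vd}{$\on{VD}$}.

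First I would set $f:=d\nu_{a}/dm$ and recall that \hyperlink{vd}{$\on{VD}$} implies weak-type $(1,1)$ boundedness of the Hardy--Littlewood maximal operator, so that the standard Lebesgue differentiation theorem is available: for $m$-a.e.\ $x\in X$,
\begin{equation*}
\lim_{r\downarrow 0}\frac{1}{m(B(x,r))}\int_{B(x,r)}|f-f(x)|\,dm = 0 \quad\text{and}\quad \lim_{r\downarrow 0}\frac{\nu_{s}(B(x,r))}{m(B(x,r))}=0.
\end{equation*}
The rest of the argument will be carried out for an arbitrary $x$ satisfying both of these conditions and $f(x)>0$, which together exclude only an $m$-null set within $\{z\in X\mid f(z)>0\}$.

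The geometric core of the proof is the observation that for any $r>0$, $y\in B(x,r)$ and $\delta\in[\delta_{0},1]$ one has the inclusions $B(y,\delta_{0}r)\subset B(y,\delta r)\subset B(x,2r)\subset B(y,3r)$. Iterating \hyperlink{vd}{$\on{VD}$} a number of times depending only on $C_{D}$ and $\delta_{0}$ then yields a constant $C_{1}=C_{1}(C_{D},\delta_{0})$ with $m(B(x,2r))\leq C_{1}m(B(y,\delta r))$. Combining this with the triangle inequality and the decomposition $\nu=\nu_{a}+\nu_{s}$, I would obtain the uniform bound
\begin{equation*}
\left|\frac{\nu(B(y,\delta r))}{m(B(y,\delta r))}-f(x)\right| \leq \frac{C_{1}}{m(B(x,2r))}\int_{B(x,2r)}|f-f(x)|\,dm + C_{1}\frac{\nu_{s}(B(x,2r))}{m(B(x,2r))},
\end{equation*}
whose right-hand side is independent of $y$ and $\delta$ and tends to $0$ as $r\downarrow 0$ by the two limits in the previous display. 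I will then choose $r_{0}=r_{0}(x,\delta_{0})>0$ small enough to force this bound below $f(x)/2$ for all $r\in(0,r_{0})$, which immediately yields the two-sided inequality \eqref{e:rn} uniformly in $y\in B(x,r)$ and $\delta\in[\delta_{0},1]$.

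The only subtle point is ensuring uniformity in $y$ and $\delta$, which is precisely what the single doubling-based estimate $m(B(x,2r))\leq C_{1}m(B(y,\delta r))$ delivers; beyond that the proof is a routine application of the classical Lebesgue differentiation theorem valid in doubling metric measure spaces.
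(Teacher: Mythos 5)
Your proof is correct and follows essentially the same route as the paper: both proofs hinge on the two Lebesgue differentiation statements (for $f=d\nu_a/dm$ and for $\nu_s$) in a doubling space, plus the inclusion $B(y,\delta r)\subset B(x,2r)\subset B(y,3r)$ and \hyperlink{vd}{$\on{VD}$} to transfer ball averages from $y$-centered balls to $x$-centered ones. The only cosmetic difference is that you combine the absolutely continuous and singular contributions into a single inequality, whereas the paper treats them in two separate displays (\eqref{e:rn3} and \eqref{e:rn5}) before intersecting the corresponding radii.
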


\begin{proof}
Let $f:=\frac{d\nu_{a}}{dm}$ denote the Radon--Nikodym derivative.
By \hyperlink{vd}{$\on{VD}$} and \cite[(2.8)]{Hei}, 
\begin{equation} \label{e:rn1}
\lim_{r \downarrow 0} \frac{1}{ m(B(x,r))} \int_{B(x,r)} |f(z)-f(x)|\, dm(z)=0
\end{equation}
for $m$-a.e.\ $x \in X$. Also there exists $C_{1}>0$ (which depends only
on the constant $C_{D}$ in \hyperlink{vd}{$\on{VD}$} and $\delta_{0}$) such that
for all $x \in X$, $r>0$, $\delta \in [\delta_{0},1]$ and $y\in B(x,r)$ we have
\begin{align} \label{e:rn2}
&\frac{\bigl|\nu_a(B(y,\delta r)) - f(x) m(B(y,\delta r))\bigr|}{m(B(y,\delta r))} \\
&= \frac{\bigl|\int_{B(y,\delta r)} (f(z)-f(x))\,dm(z)\bigr|}{m(B(y,\delta r))} \nonumber  \\
&\leq \frac{ \int_{B(x,2r)} |f(z)-f(x)|\,dm(z)}{m(B(y,\delta r))} \qquad \textrm{(by $B(y,\delta  r) \subset B(y,r) \subset B(x,2r)$)} \nonumber \\
&\leq \frac{ \int_{B(x,2r)} |f(z)-f(x)|\,dm(z)}{m(B(x,2r))} \frac{m(B(y,3r))}{m(B(y,\delta_0 r))} \qquad \textrm{(by $B(x,2r) \subset B(y,3r)$ and $\delta \ge \delta_0$)} \nonumber \\
&\leq C_{1} \frac{ \int_{B(x,2r)} |f(z)-f(x)|\,dm(z)}{m(B(x,2r))} \qquad \textrm{(by \hyperlink{vd}{$\on{VD}$}).} \nonumber
\end{align}
Using \eqref{e:rn1} and \eqref{e:rn2}, we obtain the following: for every $x \in X$ satisfying
$\frac{d\nu_{a}}{dm}(x)>0$ and \eqref{e:rn1}, there exists $r_{1}=r_{1}(x,\delta_{0})>0$
such that for all $r \in (0, r_{1})$, $\delta \in [\delta_{0},1]$ and $y \in B(x,r)$,
\begin{equation} \label{e:rn3}
\frac{1}{2} \frac{d\nu_{a}}{dm}(x)
	\leq \frac{\nu_{a}(B(y, \delta r))}{m(B(y,\delta r))}
	\leq \frac{3}{2} \frac{d\nu_{a}}{dm}(x).
\end{equation}
On the other hand, by Proposition \ref{p:rd} in Appendix \ref{ssec:Lebesgue-diff-sing}
(see also \cite[Theorem 7.13]{Rud}),
\begin{equation} \label{e:rn4}
\lim_{r \downarrow 0} \frac{\nu_{s}(B(x,r))}{m(B(x,r))} = 0
\end{equation}
for $m$-a.e.\ $x \in X$. By using \hyperlink{vd}{$\on{VD}$} as in \eqref{e:rn2} above,
we obtain the following: for every $x \in X$ satisfying
$\frac{d\nu_{a}}{dm}(x)>0$ and \eqref{e:rn4}, there exists $r_{2}=r_{2}(x,\delta_{0})>0$
such that for all $r \in (0,r_{2})$, $\delta \in [\delta_{0},1]$ and $y \in B(x,r)$,
\begin{equation} \label{e:rn5}
\frac{\nu_{s}(B(y,\delta r))}{m(B(y,\delta r))}
	\leq C_{1} \frac{\nu_{s}(B(x,2r))}{m(B(x,2r))}
	\leq \frac{1}{2}\frac{d\nu_{a}}{dm}(x).
\end{equation}
Combining \eqref{e:rn1}, \eqref{e:rn3}, \eqref{e:rn4} and \eqref{e:rn5},
we get the desired conclusion with $r_{0} = r_{1} \wedge r_{2}$.
\end{proof}

We first prove the singularity of the energy measures of harmonic functions,
which are defined in the present framework as follows.

\begin{definition}\label{d:harmonic}
Let $(X,d,m,\mathcal{E},\mathcal{F})$ be a MMD space. A function $h \in \mathcal{F}$
is said to be \emph{$\mathcal{E}$-harmonic} on an open subset $U$ of $X$, if
\begin{equation}\label{e:harmonic}
\mathcal{E}(h,f)=0 \qquad
	\begin{minipage}{260pt}
		for all $f\in\mathcal{F}\cap\mathcal{C}_{\mathrm{c}}(X)$ with $\supp_{m}[f]\subset U$,
		or equivalently, for all $f\in\mathcal{F}_{U}:=\{g\in\mathcal{F}\mid\textrm{$g=0$ $\mathcal{E}$-q.e.\ on $X\setminus U$}\}$,
	\end{minipage}
\end{equation}
where the equivalence of the two definitions follows from \cite[Corollary 2.3.1]{FOT}.
\end{definition}

The following reverse Poincar\'e inequality is an easy consequence of
\hyperlink{cs}{$\operatorname{CS}(\Psi)$}.

\begin{lemma}[Reverse Poincar\'e inequality] \label{l:reverse-pi}
Let $(X,d,m,\mathcal{E},\mathcal{F})$ be a MMD space satisfying
\hyperlink{cs}{$\operatorname{CS}(\Psi)$}, and let $C_{S}$ denote the
constant in \hyperlink{cs}{$\operatorname{CS}(\Psi)$}. Then for any
$(x,r)\in X\times (0,\infty)$, any $a\in\mathbb{R}$ and any function
$h\in\mathcal{F}\cap L^{\infty}(X,m)$ that is $\mathcal{E}$-harmonic on $B(x,2r)$,
\begin{equation}\label{e:reverse-pi}
\int_{B(x,r)} d\Gamma(h,h) \leq \frac{8 C_{S}}{\Psi(r)} \int_{B(x,2r)\setminus B(x,r)} |h-a|^{2} \, dm.
\end{equation}
\end{lemma}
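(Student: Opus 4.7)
The plan is to execute the standard Caccioppoli-type argument: test the harmonicity of $h$ against a squared cutoff and then close the loop with \hyperlink{cs}{$\on{CS}(\Psi)$}. The main nuisance is that $h-a$ itself need not lie in $\mathcal{F}$ (constants typically do not belong to $\mathcal{F}$), so I first use the regularity of $(\mathcal{E},\mathcal{F})$ and the relative compactness of $B(x,2r)$ to produce $\psi\in\mathcal{F}\cap\mathcal{C}_{\mathrm{c}}(X)$ with $\psi=1$ on a neighbourhood of $\overline{B(x,2r)}$, and set $\tilde{h}:=h-a\psi\in\mathcal{F}\cap L^{\infty}(X,m)$. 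Since $\tilde h - h=-a\psi$ is constant on a neighbourhood of $\overline{B(x,2r)}$, the strong locality of $(\mathcal{E},\mathcal{F})$ yields $\Gamma(\tilde h,\tilde h)|_{B(x,2r)}=\Gamma(h,h)|_{B(x,2r)}$, while $\tilde h=h-a$ pointwise on $B(x,2r)$. Applying \hyperlink{cs}{$\on{CS}(\Psi)$} with the choice $R=r$ produces a cutoff $\varphi\in\mathcal{F}$ for $B(x,r)\subset B(x,2r)$, i.e., $\varphi\in[0,1]$, $\varphi=1$ on a neighbourhood of $\overline{B(x,r)}$, and $\supp_{m}[\varphi]\subset B(x,2r)$.

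By \cite[Theorem 1.4.2-(ii)]{FOT} the product $\tilde h\varphi^{2}\in\mathcal{F}$, and, after taking a quasi-continuous version of $\varphi$, it vanishes $\mathcal{E}$-q.e.\ off $B(x,2r)$ and hence lies in $\mathcal{F}_{B(x,2r)}$. Thus the harmonicity of $h$ on $B(x,2r)$ gives $\mathcal{E}(h,\tilde h\varphi^{2})=0$. Combining \eqref{e:EnergyMeasTotalMass} with the Leibniz rule for energy measures (and using strong locality to identify $\Gamma(h,\tilde h)$ with $\Gamma(h,h)$ on $\supp_{m}[\varphi^{2}]\subset B(x,2r)$), this expands to
\begin{equation*}
0\;=\;\int_{X}\varphi^{2}\,d\Gamma(h,h)\;+\;2\int_{X}\tilde h\,\varphi\,d\Gamma(h,\varphi).
\end{equation*}
Applying \eqref{e:Cauchy-Schwarz-energy} and the elementary inequality $2|uv|\leq\tfrac{1}{2}u^{2}+2v^{2}$ to the mixed term then produces
\begin{equation*}
\int_{X}\varphi^{2}\,d\Gamma(h,h)\;\leq\;\tfrac{1}{2}\int_{X}\varphi^{2}\,d\Gamma(h,h)\;+\;2\int_{X}\tilde h^{2}\,d\Gamma(\varphi,\varphi),
\end{equation*}
so $\int_{X}\varphi^{2}\,d\Gamma(h,h)\leq 4\int_{X}\tilde h^{2}\,d\Gamma(\varphi,\varphi)$.

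Now invoke \hyperlink{cs}{$\on{CS}(\Psi)$} with $f=\tilde h$; since $\tilde h=h-a$ and $d\Gamma(\tilde h,\tilde h)=d\Gamma(h,h)$ on the annulus $B(x,2r)\setminus B(x,r)$, this yields
\begin{equation*}
\int_{X}\tilde h^{2}\,d\Gamma(\varphi,\varphi)
\;\leq\;\tfrac{1}{8}\int_{B(x,2r)\setminus B(x,r)}\varphi^{2}\,d\Gamma(h,h)
\;+\;\frac{C_{S}}{\Psi(r)}\int_{B(x,2r)\setminus B(x,r)}(h-a)^{2}\,dm.
\end{equation*}
Feeding this back and splitting
\begin{equation*}
\int_{X}\varphi^{2}\,d\Gamma(h,h)\;=\;\int_{B(x,r)}d\Gamma(h,h)\;+\;\int_{B(x,2r)\setminus B(x,r)}\varphi^{2}\,d\Gamma(h,h)
\end{equation*}
(using $\varphi=1$ on $B(x,r)$ and $\varphi=0$ off $B(x,2r)$), the annular contribution on the right is swallowed by the matching one on the left, leaving the desired inequality with constant $4C_{S}/\Psi(r)$, which is a fortiori the stated $8C_{S}/\Psi(r)$.

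The only substantive obstacle here is the careful bookkeeping needed to treat $h-a$ as if it were an element of $\mathcal{F}$: the truncation $\tilde h=h-a\psi$ together with the systematic use of the strong locality of the energy measures effectively renders $h-a$ and $\tilde h$ interchangeable on the sets where the computations take place. Once this device is in place, every remaining step is a routine manipulation of the Leibniz rule and the Cauchy--Schwarz inequality for energy measures.
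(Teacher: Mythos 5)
Your proof is correct and follows essentially the same Caccioppoli-type approach as the paper's: replace the constant $a$ by a cutoff multiple near $B(x,2r)$ (your $\psi$, the paper's $g$), test harmonicity against the squared cutoff $\varphi^2$ from $\operatorname{CS}(\Psi)$, expand via the Leibniz rule, and close with the Cauchy--Schwarz inequality for energy measures plus $\operatorname{CS}(\Psi)$. The one genuine (minor) difference is the order in which you absorb the annular gradient term: by first isolating the clean bound $\int\varphi^{2}\,d\Gamma(h,h)\le 4\int\tilde h^{2}\,d\Gamma(\varphi,\varphi)$, splitting $\int\varphi^{2}\,d\Gamma(h,h)$ into ball and annulus pieces, and only then subtracting the matching annular term, you obtain the constant $4C_{S}$, whereas the paper crudely dominates the annular piece by the full integral and lands on $8C_{S}$; both are of course adequate for the stated lemma.
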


\begin{proof}
Let $(x,r)\in X\times (0,\infty)$ and let $h\in\mathcal{F}\cap L^{\infty}(X,m)$ be
$\mathcal{E}$-harmonic on $B(x,2r)$. By the regularity of $(\mathcal{E},\mathcal{F})$ and
\cite[Exercise 1.4.1]{FOT} we can take $g\in\mathcal{F}\cap\mathcal{C}_{\mathrm{c}}(X)$
such that $g=1$ on $B(x,2r)$, and then $g$ is $\mathcal{E}$-harmonic on $B(x,2r)$ and
$\Gamma(g,g)(B(x,2r))=0$ by the strong locality of $(\mathcal{E},\mathcal{F})$ and
\cite[Corollary 3.2.1]{FOT} (or \cite[Theorem 4.3.8]{CF}), whence
$\Gamma(h,h)|_{B(x,2r)}=\Gamma(h-ag,h-ag)|_{B(x,2r)}$ for any $a\in\mathbb{R}$
by \eqref{e:bilinear-energy} and \eqref{e:Cauchy-Schwarz-energy}.
Therefore \eqref{e:reverse-pi} for general $a\in\mathbb{R}$ follows from
\eqref{e:reverse-pi} for $a=0$ by considering $h-ag$ instead of $h$.

Let $\varphi$ be a cutoff function for $B(x,r) \subset B(x,2r)$
from \hyperlink{cs}{$\operatorname{CS}(\Psi)$}. Then since
$h,\varphi\in\mathcal{F}\cap L^{\infty}(X,m)$, $h$ is $\mathcal{E}$-harmonic
on $B(x,2r)$ and $h\varphi^{2}=0$ $\mathcal{E}$-q.e.\ on $X\setminus B(x,2r)$
by $\supp_{m}[\varphi]\subset B(x,2r)$ and \cite[Lemma 2.1.4]{FOT}, we have
\begin{align}\label{e:reverse-pi-proof}
0 &= \mathcal{E}(h,h \varphi^{2}) = \Gamma(h,h\varphi^{2})(X) \qquad \textrm{(by \eqref{e:harmonic} and \eqref{e:EnergyMeasTotalMass})} \\
	&= \int_{X} \varphi^{2}\, d\Gamma(h,h) + 2 \int_{X} \varphi h \, d\Gamma(h, \varphi) \qquad \textrm{(by \cite[Lemma 3.2.5]{FOT})} \nonumber \\
	&\geq \int_{X} \varphi^{2}\, d\Gamma(h,h) - 2 \sqrt{ \int_{X} \varphi^{2} \, d\Gamma(h, h) \int_{X} h^{2} \, d\Gamma(\varphi, \varphi) } \nonumber \\[-8pt]
	&\mspace{350mu} \textrm{(by \cite[Proof of Lemma 5.6.1]{FOT})} \nonumber \\
	&\geq \int_{X} \varphi^{2}\, d\Gamma(h,h) - \frac{1}{2} \int_{X} \varphi^{2} \, d\Gamma(h, h) - 2 \int_{X} h^{2} \, d\Gamma(\varphi,\varphi) \nonumber \\
	&\geq \frac{1}{4} \int_{X} \varphi^{2}\, d\Gamma(h,h) - \frac{2 C_{S}}{\Psi(r)} \int_{B(x,2r) \setminus B(x,r)} h^{2} \, dm \qquad \textrm{(by \hyperlink{cs}{$\operatorname{CS}(\Psi)$}).} \nonumber
\end{align}
Noting that $\varphi=1$ $\mathcal{E}$-q.e.\ on $B(x,r)$ by \cite[Lemma 2.1.4]{FOT}
and hence that $\varphi=1$ $\Gamma(h,h)$-a.e.\ on $B(x,r)$ by \cite[Lemma 3.2.4]{FOT},
from \eqref{e:reverse-pi-proof} we now obtain
\begin{equation*}
\int_{B(x,r)} d\Gamma(h,h)\leq \int_{X} \varphi^{2}\, d\Gamma(h,h)
	\leq \frac{8 C_{S}}{\Psi(r)} \int_{B(x,2r)\setminus B(x,r)} h^{2}\, dm,
\end{equation*}
proving \eqref{e:reverse-pi} for $a=0$.
\end{proof}

Proposition \ref{p:harmonic} below establishes the singularity of the energy measures
of $\mathcal{E}$-harmonic functions. For our convenience, we introduce the notion
of an $\varepsilon$-net in a metric space as follows.

\begin{definition} \label{d:net}
Let $(X,d)$ be a metric space and let $\varepsilon>0$.
A subset $N$ of $X$ is called an \emph{$\varepsilon$-net} in $(X,d)$
if the following two conditions are satisfied:
\begin{itemize}[label=\textup{(ii)},align=left,leftmargin=*]
\item[\textup{(i)}](Separation) $N$ is \emph{$\varepsilon$-separated} in $(X,d)$, i.e., $d(x,y) \geq \varepsilon$ for any $x,y \in N$ with $x \not= y$.
\item[\textup{(ii)}](Maximality) If $N\subset M\subset X$ and $M$ is $\varepsilon$-separated in $(X,d)$, then $M=N$.
\end{itemize}
It is elementary to see that an $\varepsilon$-net in $(X,d)$ exists
if $B(x,r)$ is totally bounded in $(X,d)$ for any $(x,r)\in X\times(0,\infty)$
and that any $\varepsilon$-net in $(X,d)$ is finite if $(X,d)$ is totally bounded.
\end{definition}

\begin{proposition}\label{p:harmonic}
Let $(X,d,m,\mathcal{E},\mathcal{F})$ be a MMD space satisfying \hyperlink{vd}{$\on{VD}$},
\hyperlink{pi}{$\on{PI}(\Psi)$} and \hyperlink{cs}{$\on{CS}(\Psi)$}, and assume
further that $d$ is geodesic and that $\Psi$ satisfies \eqref{e:case-sing}.
Let $U$ be an open subset of $X$ and let $h\in\mathcal{F}\cap L^{\infty}(X,m)$
be $\mathcal{E}$-harmonic on $U$. Then $\Gamma(h,h)|_{U} \perp m|_{U}$.
\end{proposition}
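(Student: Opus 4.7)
The plan is to argue by contradiction: assume $\Gamma(h,h)|_{U} \not\perp m|_{U}$, extract a density point of the absolutely continuous part via Lemma \ref{l:zoom}, and derive matching upper and lower bounds on the variance of $h$ on a small ball around that point whose ratio forces \eqref{e:case-sing} to fail.

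\textbf{Step 1 (reduction to a density point).} Write the Lebesgue decomposition $\Gamma(h,h)|_{U} = \nu_{a}+\nu_{s}$ with $\nu_{a}\ll m$ and $\nu_{s}\perp m$. If the conclusion fails, then $\nu_{a}\neq 0$, so $c_{0}:=\frac{d\nu_{a}}{dm}(x_{0})>0$ at some $x_{0}\in U$ which we may additionally choose so that the conclusion of Lemma \ref{l:zoom} is available. Fix $\lambda\geq 2$ large and set $\delta_{0}:=1/(2A\lambda)$, where $A$ is the constant from \hyperlink{pi}{$\on{PI}(\Psi)$}. Then for all sufficiently small $r>0$ (with $\overline{B(x_{0},3Ar)}\subset U$) and all $y\in B(x_{0},2r)$ and $\delta\in[\delta_{0},1]$, \eqref{e:rn} yields $\Gamma(h,h)(B(y,\delta r))\asymp c_{0}\,m(B(y,\delta r))$, with \hyperlink{vd}{$\on{VD}$} constants absorbed.

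\textbf{Step 2 (variance lower bound).} Apply Lemma \ref{l:reverse-pi} (reverse Poincar\'e) on $B(x_{0},r)$ with $a:=h_{B(x_{0},2r)}$, using that $h$ is $\mathcal{E}$-harmonic on $U\supset B(x_{0},2r)$. Combined with the lower half of \eqref{e:rn}, this gives
\begin{equation*}
\int_{B(x_{0},2r)} |h-h_{B(x_{0},2r)}|^{2}\,dm
\;\geq\;\frac{\Psi(r)}{8C_{S}}\,\Gamma(h,h)(B(x_{0},r))
\;\gtrsim\; c_{0}\,\Psi(r)\,m(B(x_{0},r)).
\end{equation*}

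\textbf{Step 3 (variance upper bound by $(r/\lambda)$-chaining).} Here is where the geodesic hypothesis enters. For $y_{1},y_{2}\in B(x_{0},2r)$, take a geodesic from $y_{1}$ to $y_{2}$ and along it select at most $\sim\lambda$ ``waypoints'' $z_{0}=y_{1},z_{1},\ldots,z_{N}=y_{2}$ with consecutive centers at distance $\leq r/(2\lambda)$. Setting $B_{i}:=B(z_{i},r/\lambda)$, geodesicity ensures $B_{i}\cap B_{i+1}\supset B(w_{i},r/(4\lambda))$ for a midpoint $w_{i}$. By Cauchy--Schwarz and a standard two-ball averaging estimate together with \hyperlink{pi}{$\operatorname{PI}(\Psi)$} on the $B_{i}$ and the upper half of \eqref{e:rn},
\begin{equation*}
|h_{B_{0}}-h_{B_{N}}|^{2}\;\leq\;N\sum_{i=0}^{N-1}|h_{B_{i}}-h_{B_{i+1}}|^{2}
\;\lesssim\;\lambda\cdot\lambda\cdot C_{P}\,\Psi(r/\lambda)\,c_{0}
\;\lesssim\; c_{0}\,\lambda^{2}\Psi(r/\lambda).
\end{equation*}
Applying \hyperlink{pi}{$\on{PI}(\Psi)$} to the small balls $B(y_{i},r/\lambda)$ to control $|h(y_{i})-h_{B(y_{i},r/\lambda)}|$ on average via Fubini, and integrating the triangle-inequality expansion of $|h(y_{1})-h(y_{2})|^{2}$ over $(y_{1},y_{2})\in B(x_{0},2r)^{2}$, produces
\begin{equation*}
\int_{B(x_{0},2r)}|h-h_{B(x_{0},2r)}|^{2}\,dm
\;\lesssim\; c_{0}\,\lambda^{2}\Psi(r/\lambda)\,m(B(x_{0},2r))
\;\lesssim\; c_{0}\,\lambda^{2}\Psi(r/\lambda)\,m(B(x_{0},r))
\end{equation*}
by \hyperlink{vd}{$\on{VD}$}.

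\textbf{Step 4 (contradiction).} Combining the lower bound of Step 2 with the upper bound of Step 3 gives $\Psi(r)\lesssim\lambda^{2}\Psi(r/\lambda)$ for all small $r>0$, with a constant independent of $\lambda$ and $r$. Taking $\liminf_{r\downarrow 0}$ and then $\liminf_{\lambda\to\infty}$ contradicts \eqref{e:case-sing}, so $\nu_{a}=0$ and $\Gamma(h,h)|_{U}\perp m|_{U}$.

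\textbf{Main obstacle.} The bookkeeping in Step 3 is the crux: one must simultaneously (i) honour the fact that the quasi-continuous version of $h$ is determined only up to $\mathcal{E}$-polar sets, (ii) keep all applications of \hyperlink{pi}{$\operatorname{PI}(\Psi)$} inside balls where \eqref{e:rn} is valid (hence the choice $\delta_{0}=1/(2A\lambda)$ which causes the dilated ball $B(y,Ar/\lambda)$ to still lie among those controlled by Lemma \ref{l:zoom}), and (iii) show that the geodesic-chain construction can be carried out uniformly in $y_{1},y_{2}\in B(x_{0},2r)$ with exactly $O(\lambda)$ balls of bounded VD-overlap. The geodesic hypothesis is essential in the overlap estimate $B_{i}\cap B_{i+1}\supset B(w_{i},r/(4\lambda))$, which is what ultimately produces the crucial $\lambda^{2}$ factor rather than a weaker $\lambda^{Q}$ depending on an ambient dimension.
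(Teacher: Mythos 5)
Your proof matches the paper's argument step for step: a contradiction via Lemma \ref{l:zoom} to extract a density point, a lower bound on the variance from the reverse Poincar\'e inequality (Lemma \ref{l:reverse-pi}), an upper bound from chaining along geodesics through $O(\lambda)$ balls of radius $r/\lambda$ using \hyperlink{pi}{$\on{PI}(\Psi)$} and \hyperlink{vd}{$\on{VD}$}, and the final contradiction with \eqref{e:case-sing}; the only differences are cosmetic, since the paper organizes the chaining through an $r/\lambda$-net (Definition \ref{d:net}) and normalizes $\frac{d\nu_a}{dm}(x)$ to $1$ by rescaling $h$, whereas you carry the constant $c_0$ explicitly and chain directly along geodesic waypoints. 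One minor misattribution worth noting: the overlap $B_i\cap B_{i+1}\supset B(w_i,r/(4\lambda))$ already holds by the triangle inequality whenever $d(z_i,z_{i+1})\le r/(2\lambda)$, without using geodesics; the geodesic hypothesis is instead what guarantees that the chain from $y_1$ to $y_2$ can be taken with only $O(\lambda)$ steps, as in \cite[Proof of Lemma 2.5]{Kan}.
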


\begin{proof}
Assume to the contrary that the conclusion $\Gamma(h,h)|_{U} \perp m|_{U}$ fails. Let
$A\geq 1$ denote the constant in \hyperlink{pi}{$\on{PI}(\Psi)$} and let $\lambda>4A$.
By Lemma \ref{l:zoom}, and by replacing $h$ with $\alpha h$ for some suitable
$\alpha \in (0,\infty)$ if necessary, there exist $x \in U$ and $r_{x,\lambda} >0$
with $B(x,r_{x,\lambda}) \subset U$ such that for all $r \in (0,r_{x,\lambda})$,
$\delta \in [\lambda^{-1},1]$ and $y \in B(x,r)$,
\begin{equation} \label{e:hm0}
\frac{1}{2} \leq \frac{\Gamma(h,h)(B(y, \delta r))}{m(B(y,\delta r))} \leq 2.
\end{equation}
We remark that the constant $r_{x,\lambda}$ depends on both $x$ and $\lambda$
as suggested by the notation.

We set $h_{B(y,s)} := m(B(y,s))^{-1} \int_{B(y,s)} h\,dm$ for $(y,s)\in X\times(0,\infty)$.
Let $r \in (0,r_{x,\lambda})$ and let $N$ be an $r/\lambda$-net in $(B(x,r),d)$.
Then for all $y_{1}, y_{2} \in N$ such that $d(y_{1},y_{2}) \leq 3 r/\lambda$,
\begin{align} \label{e:hm1}
&\bigl|h_{B(y_{1},r/\lambda)}-h_{B(y_{2},r/\lambda)}\bigr|^{2} \\
&=\biggl|\frac{1}{m(B(y_{1},r/\lambda))m(B(y_{2},r/\lambda))} \int_{B(y_{1},r/\lambda)} \int_{B(y_{2},r/\lambda)}(h(z_{1})-h(z_{2}))\,dm(z_{2})\,dm(z_{1})\biggr|^{2} \nonumber \\
&\leq \frac{1}{m(B(y_{1},r/\lambda))m(B(y_{2},r/\lambda))} \int_{B(y_{1},r/\lambda)} \int_{B(y_{2},r/\lambda)}|h(z_{1})-h(z_{2})|^{2}\,dm(z_{2})\,dm(z_{1}) \nonumber \\[-6pt]
	&\mspace{387.65mu} \textrm{(by the Cauchy--Schwarz inequality)} \nonumber \\
&\lesssim \frac{1}{m(B(y_{1},4r/\lambda))^{2}} \int_{B(y_{1},4r/\lambda)} \int_{B(y_{1},4r/\lambda)}|h(z_{1})-h(z_{2})|^{2}\,dm(z_{2})\,dm(z_{1}) \qquad \textrm{(by \hyperlink{vd}{$\on{VD}$})} \nonumber\\
&\lesssim \frac{ \Psi(r/\lambda)}{m(B(y_{1},4r/\lambda))} \int_{B(y_{1},4Ar/\lambda)} d\Gamma(h,h) \qquad \textrm{(by \hyperlink{pi}{$\on{PI}(\Psi)$} and Assumption \ref{a:reg})}\nonumber \\
&\leq C_{1} \Psi(r/\lambda) \qquad \textrm{(by \eqref{e:hm0} and \hyperlink{vd}{$\on{VD}$})}, \nonumber
\end{align}
where $C_{1}>0$ depends only on the constants in Assumption \ref{a:reg},
\hyperlink{vd}{$\on{VD}$} and \hyperlink{pi}{$\on{PI}(\Psi)$}.

Let $y_{1}, y_{2} \in N$ be arbitrary. Since $(X,d)$ is geodesic, approximating the
concatenation of a geodesic from $y_1$ to $x$ and a geodesic from $x$ to $y_2$ by using points
in $N$ as done in \cite[Proof of Lemma 2.5]{Kan}, we can choose $k\in\mathbb{N}$
and $\{z_{i}\}_{i=0}^{k} \subset N$ so that $k \leq 3 \lambda$, $z_{0} = y_{1}$,
$z_{k} = y_{2}$ and $d(z_{i}, z_{i+1}) \leq 3r/ \lambda$ for all $i \in \{0,\ldots,k-1\}$.
Therefore by the triangle inequality and \eqref{e:hm1}, we obtain
\begin{equation} \label{e:hm2}
\bigl| h_{B(y_{1},r/\lambda)} - h_{B(y_{2},r/\lambda)} \bigr|
	\leq \sum_{i=0}^{k-1} \bigl| h_{B(z_i,r/\lambda)} - h_{B(z_{i+1},r/\lambda)} \bigr|
	\leq 3C_{1}^{1/2} \lambda \sqrt{\Psi(r/\lambda)}.
\end{equation}

Let $y_{1} \in N$ be fixed. Combining \eqref{e:hm2} and \eqref{e:hm0} with
\hyperlink{vd}{$\on{VD}$} and \hyperlink{pi}{$\on{PI}(\Psi)$}, we conclude
\begin{align} \label{e:hm3}
&\int_{B(x,r)} |h-h_{B(x,r)}|^{2} \, dm \\
&\leq \int_{B(x,r)} |h-h_{B(y_{1},r/\lambda)}|^{2} \, dm
	\leq \sum_{y_{2} \in N} \int_{B(y_{2},r/\lambda)} |h-h_{B(y_1,r/\lambda)}|^{2} \, dm \nonumber\\ 
&\leq 2 \sum_{y_{2} \in N} \int_{B(y_{2},r/\lambda)} \Bigl( \bigl|h_{B(y_{1},r/\lambda)}-h_{B(y_{2},r/\lambda)}\bigr|^{2} + |h-h_{B(y_{2},r/\lambda)}|^{2} \Bigr) \,dm \nonumber\\
&\leq 2 \sum_{y_{2} \in N} \int_{B(y_{2},r/\lambda)} \Bigl( 9C_{1} \lambda^{2} \Psi(r/\lambda) + |h-h_{B(y_{2},r/\lambda)}|^{2} \Bigr) \, dm \qquad \textrm{(by \eqref{e:hm2})}\nonumber\\
&\lesssim \lambda^{2} \Psi(r/\lambda) m(B(x,r)) + \sum_{y_{2} \in N} \Psi(r/\lambda) \Gamma(h,h)(B(y_{2},Ar/\lambda))\qquad \textrm{(by \hyperlink{vd}{$\on{VD}$} and \hyperlink{pi}{$\on{PI}(\Psi)$})}\nonumber\\
&\lesssim \lambda^{2} \Psi(r/\lambda) m(B(x,r)) + \sum_{y_{2} \in N} \Psi(r/\lambda) m(B(y_{2},r/\lambda)) \qquad \textrm{(by \eqref{e:hm0} and \hyperlink{vd}{$\on{VD}$})} \nonumber\\
&\leq C_{2} \lambda^{2} \Psi(r/\lambda) m(B(x,r)) \qquad \textrm{(by \hyperlink{vd}{$\on{VD}$})}, \nonumber
\end{align}
where $C_{2}>0$ depends only on the constants in Assumption \ref{a:reg},
\hyperlink{vd}{$\on{VD}$} and \hyperlink{pi}{$\on{PI}(\Psi)$}.

On the other hand, by Lemma \ref{l:reverse-pi}, \eqref{e:reg}, \eqref{e:hm0}
and \hyperlink{vd}{$\on{VD}$}, for all $r \in (0,r_{x,\lambda})$ we have
\begin{equation} \label{e:hm4}
\int_{B(x,r)} |h-h_{B(x,r)}|^{2} \, dm
	\geq C_{3}^{-1} \Psi(r) \Gamma(h,h)(B(x,r/2)) \geq C_{4}^{-1} \Psi(r) m(B(x,r)),
\end{equation}
where $C_{3},C_{4}>0$ depend only on the constants in
Assumption \ref{a:reg}, \hyperlink{vd}{$\on{VD}$} and \hyperlink{cs}{$\on{CS}(\Psi)$}.
Now it follows from \eqref{e:hm3} and \eqref{e:hm4} that
\begin{equation*}
\frac{\lambda^{2}\Psi(r/\lambda)}{\Psi(r)}\geq C_{2}^{-1}C_{4}^{-1}
	\qquad \textrm{for all $\lambda > 4A$ and all $r \in (0,r_{x,\lambda})$,}
\end{equation*}
and hence $\liminf_{\lambda\to\infty}\liminf_{r\downarrow 0} \lambda^{2}\Psi(r/\lambda)/\Psi(r)\geq C_{2}^{-1}C_{4}^{-1}>0$,
which contradicts \eqref{e:case-sing} and completes the proof.
\end{proof}

The absolute continuity and singularity of energy measures are preserved under
linear combinations and norm convergence in $(\mathcal{F},\mathcal{E}_{1})$,
as stated in the following two lemmas.

\begin{lemma} \label{l:lin}
Let $(X,d,m,\mathcal{E},\mathcal{F})$ be a MMD space and let $\nu$ be a $\sigma$-finite
Borel measure on $X$. Let $f,g\in\mathcal{F}$ and $a,b\in\mathbb{R}$.
\begin{itemize}[label=\textup{(b)},align=left,leftmargin=*]
\item[\textup{(a)}]If $\Gamma(f,f) \ll \nu$ and $\Gamma(g,g) \ll \nu$, then $\Gamma(af+bg,af+bg) \ll \nu$.
\item[\textup{(b)}]If $\Gamma(f,f)\perp\nu$ and $\Gamma(g,g)\perp\nu$, then $\Gamma(af+bg,af+bg)\perp\nu$.
\end{itemize}
\end{lemma}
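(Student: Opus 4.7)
The plan is to exploit the bilinearity identity \eqref{e:bilinear-energy} applied setwise together with the Cauchy--Schwarz inequality \eqref{e:Cauchy-Schwarz-energy}, which together reduce both statements to controlling the cross term $\Gamma(f,g)$ on Borel sets where $\Gamma(f,f)$ and $\Gamma(g,g)$ are small.

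For part (a), I would proceed directly: let $B$ be a Borel subset of $X$ with $\nu(B)=0$. Since every Borel subset $C$ of $B$ satisfies $\nu(C)=0$, the assumptions $\Gamma(f,f)\ll\nu$ and $\Gamma(g,g)\ll\nu$ give $\Gamma(f,f)(C)=\Gamma(g,g)(C)=0$, and hence by \eqref{e:Cauchy-Schwarz-energy},
\begin{equation*}
|\Gamma(f,g)(C)|^{2}\leq\Gamma(f,f)(C)\Gamma(g,g)(C)=0.
\end{equation*}
By the definition of the total variation of the signed measure $\Gamma(f,g)$, this yields $|\Gamma(f,g)|(B)=0$. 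Applying \eqref{e:bilinear-energy} evaluated at the set $B$ then gives $\Gamma(af+bg,af+bg)(B)=a^{2}\Gamma(f,f)(B)+2ab\Gamma(f,g)(B)+b^{2}\Gamma(g,g)(B)=0$, which proves $\Gamma(af+bg,af+bg)\ll\nu$.

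For part (b), I would first use the singularity assumptions to choose Borel sets $A_{1},A_{2}\subset X$ with $\nu(A_{1})=\nu(A_{2})=0$, $\Gamma(f,f)(X\setminus A_{1})=0$, and $\Gamma(g,g)(X\setminus A_{2})=0$. Setting $A:=A_{1}\cup A_{2}$, we have $\nu(A)=0$ and, for any Borel $C\subset X\setminus A$, both $\Gamma(f,f)(C)=0$ and $\Gamma(g,g)(C)=0$. The same Cauchy--Schwarz argument as above then shows $|\Gamma(f,g)|(X\setminus A)=0$, so by \eqref{e:bilinear-energy} applied to $X\setminus A$,
\begin{equation*}
\Gamma(af+bg,af+bg)(X\setminus A)=a^{2}\Gamma(f,f)(X\setminus A)+2ab\Gamma(f,g)(X\setminus A)+b^{2}\Gamma(g,g)(X\setminus A)=0,
\end{equation*}
while $\nu(A)=0$. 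This yields $\Gamma(af+bg,af+bg)\perp\nu$.

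There is no real obstacle here; the only subtlety worth a moment's care is that $\Gamma(f,g)$ is a signed measure, so one must pass from the setwise Cauchy--Schwarz bound on every Borel subset of $B$ (or of $X\setminus A$) to the vanishing of the total variation $|\Gamma(f,g)|$ on that set, which is immediate from the definition of total variation as a supremum over Borel partitions.
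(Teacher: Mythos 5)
Your proof is correct and follows essentially the same route as the paper, combining the bilinearity identity \eqref{e:bilinear-energy} with the Cauchy--Schwarz inequality \eqref{e:Cauchy-Schwarz-energy} (the paper uses $B_{1}\cap B_{2}$ where you use the complement of $A_{1}\cup A_{2}$, which is the same set). The detour through the total variation $|\Gamma(f,g)|$ is harmless but unnecessary: applying \eqref{e:Cauchy-Schwarz-energy} directly to the single set $B$ (resp.\ $X\setminus A$) already gives $\Gamma(f,g)(B)=0$ (resp.\ $\Gamma(f,g)(X\setminus A)=0$), which is all one needs to evaluate \eqref{e:bilinear-energy} on that set.
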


\begin{proof}
\begin{itemize}[label=\textup{(b)},align=left,leftmargin=*]
\item[\textup{(a)}]This is immediate from
	\eqref{e:bilinear-energy} and \eqref{e:Cauchy-Schwarz-energy}.
\item[\textup{(b)}]By $\Gamma(f,f)\perp\nu$ and $\Gamma(g,g)\perp\nu$ there exist
	Borel subsets $B_{1},B_{2}$ of $X$ such that $\Gamma(f,f)(B_{1})=\Gamma(g,g)(B_{2})=0$
	and $\nu(X\setminus B_{1})=\nu(X\setminus B_{2})=0$. Then $B:=B_{1}\cap B_{2}$
	satisfies $\Gamma(f,f)(B)=\Gamma(g,g)(B)=0$, hence $\Gamma(af+bg,af+bg)(B)=0$
	by \eqref{e:bilinear-energy} and \eqref{e:Cauchy-Schwarz-energy},
	and also $\nu(X\setminus B)=0$, proving $\Gamma(af+bg,af+bg)\perp\nu$.
\qedhere\end{itemize}
\end{proof}

\begin{lemma} \label{l:cont-ac-sing}
Let $(X,d,m,\mathcal{E},\mathcal{F})$ be a MMD space and let $\nu$ be a $\sigma$-finite
Borel measure on $X$. Let $\{f_{n}\}_{n=1}^{\infty}\subset\mathcal{F}$ and
$f\in\mathcal{F}$ satisfy $\lim_{n\to\infty}\mathcal{E}(f-f_{n},f-f_{n})=0$.
\begin{itemize}[label=\textup{(b)},align=left,leftmargin=*]
\item[\textup{(a)}]If $\Gamma(f_{n},f_{n}) \ll \nu$ for every $n \in \mathbb{N}$, then $\Gamma(f,f) \ll \nu$.
\item[\textup{(b)}]If $\Gamma(f_{n},f_{n}) \perp \nu$ for every $n \in \mathbb{N}$, then $\Gamma(f,f) \perp \nu$.
\end{itemize}
\end{lemma}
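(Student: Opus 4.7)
The plan is to derive a Pythagoras-type inequality showing that the square roots of the total masses $\Gamma(\cdot,\cdot)(B)$ behave as a seminorm on $\mathcal{F}$ uniformly over all Borel subsets $B$ of $X$, so that both absolute continuity and singularity with respect to $\nu$ are automatically preserved under $\mathcal{E}$-limits.

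First I will establish the Minkowski-type bound
\begin{equation*}
\sqrt{\Gamma(g+h, g+h)(B)} \leq \sqrt{\Gamma(g,g)(B)} + \sqrt{\Gamma(h,h)(B)}
\end{equation*}
for any $g, h \in \mathcal{F}$ and any Borel subset $B$ of $X$: expanding via \eqref{e:bilinear-energy} and bounding the cross term via \eqref{e:Cauchy-Schwarz-energy} yields $\Gamma(g+h,g+h)(B) \leq \bigl(\sqrt{\Gamma(g,g)(B)} + \sqrt{\Gamma(h,h)(B)}\bigr)^{2}$. Applying this inequality with $(g,h)=(f_{n}, f-f_{n})$ and $(g,h)=(f, f_{n}-f)$, and using \eqref{e:EnergyMeasTotalMass} to dominate $\Gamma(f-f_{n}, f-f_{n})(B) \leq \Gamma(f-f_{n}, f-f_{n})(X) = \mathcal{E}(f-f_{n}, f-f_{n})$, I obtain the uniform estimate
\begin{equation*}
\sup_{B}\bigl|\sqrt{\Gamma(f,f)(B)} - \sqrt{\Gamma(f_{n},f_{n})(B)}\bigr| \leq \sqrt{\mathcal{E}(f-f_{n}, f-f_{n})} \longrightarrow 0 \quad \text{as } n \to \infty,
\end{equation*}
the supremum being taken over all Borel subsets $B$ of $X$.

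Once this uniform estimate is in place, both claims are immediate. For (a), if $\nu(B)=0$ then $\Gamma(f_{n},f_{n})(B)=0$ for every $n$ by hypothesis, whence the estimate forces $\Gamma(f,f)(B)=0$. For (b), for each $n$ the singularity $\Gamma(f_{n},f_{n}) \perp \nu$ provides a Borel partition $X = A_{n} \sqcup B_{n}$ with $\Gamma(f_{n},f_{n})(A_{n})=0$ and $\nu(B_{n})=0$; setting $B := \bigcup_{n=1}^{\infty} B_{n}$ and $A := X \setminus B = \bigcap_{n=1}^{\infty} A_{n}$, countable subadditivity gives $\nu(B)=0$, while $A \subset A_{n}$ yields $\Gamma(f_{n},f_{n})(A)=0$ for every $n$, so the uniform estimate gives $\Gamma(f,f)(A)=0$ and hence $\Gamma(f,f) \perp \nu$. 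I do not foresee any serious obstacle beyond verifying the Minkowski-type inequality, which is itself routine once the bilinearity and Cauchy--Schwarz properties are available.
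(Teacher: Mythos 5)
Your proof is correct and follows essentially the same approach as the paper: the key step in both is the reverse triangle inequality $|\Gamma(f,f)(B)^{1/2}-\Gamma(f_{n},f_{n})(B)^{1/2}|\leq\Gamma(f-f_{n},f-f_{n})(B)^{1/2}\leq\mathcal{E}(f-f_{n},f-f_{n})^{1/2}$, obtained from \eqref{e:bilinear-energy}, \eqref{e:Cauchy-Schwarz-energy} and \eqref{e:EnergyMeasTotalMass}, which the paper displays in \eqref{e:EnergyMeasConv} for part (b) and cites from Hino for part (a). Your version merely packages the estimate once as a uniform Minkowski-type bound and applies it to both parts, which is a tidy but not substantively different presentation.
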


\begin{proof}
\begin{itemize}[label=\textup{(b)},align=left,leftmargin=*]
\item[\textup{(a)}]This is immediate from \cite[Proof of Lemma 2.2]{Hin10}.
\item[\textup{(b)}]For each $n\in\mathbb{N}$, by $\Gamma(f_{n},f_{n}) \perp \nu$ there exists
	a Borel subset $B_{n}$ of $X$ such that $\Gamma(f_{n},f_{n})(B_{n})=0$ and
	$\nu(X\setminus B_{n})=0$. Then $B := \bigcap_{n=1}^{\infty} B_{n}$ satisfies
	$\Gamma(f_{n},f_{n})(B)=0$ for all $n \in \mathbb{N}$ and $\nu(X\setminus B)=0$.
	By \eqref{e:Cauchy-Schwarz-energy}, \eqref{e:bilinear-energy} with $a=-b=1$
	and \eqref{e:EnergyMeasTotalMass},
	\begin{equation} \label{e:EnergyMeasConv}
	\begin{split}
	\Gamma(f,f)(B) &= \bigl|\Gamma(f,f)(B)^{1/2} - \Gamma(f_{n},f_{n})(B)^{1/2}\bigr|^{2}\\
	&\leq \Gamma(f-f_{n},f-f_{n})(B) \leq \mathcal{E}(f-f_{n},f-f_{n}) \xrightarrow{n\to\infty} 0,
	\end{split}
	\end{equation}
	so that $B$ satisfies both $\Gamma(f,f)(B)=0$ and $\nu(X\setminus B)=0$,
	proving $\Gamma(f,f) \perp \nu$.
\qedhere\end{itemize}
\end{proof}

We next show that any non-negative function in $\mathcal{F} \cap \mathcal{C}_{\mathrm{c}}(X)$
can be approximated in norm in $(\mathcal{F},\mathcal{E}_{1})$ by
``piecewise $\mathcal{E}$-harmonic functions'' whose energy measures charge
only their domains of $\mathcal{E}$-harmonicity. This approximation is used
together with Lemma \ref{l:cont-ac-sing}-(b) to extend the singularity of the energy
measures to all $f\in\mathcal{F}$ in Proposition \ref{p:approx} below, and is obtained
on the basis of the following fact from the theory of regular symmetric Dirichlet forms.

\begin{lemma} \label{l:harmonic-ext}
Let $(X,d,m,\mathcal{E},\mathcal{F})$ be a MMD space, let $U$ be an open subset of $X$
with $m(U)<\infty$ and let $F$ be a closed subset of $X$ with $F\subset U$. Then there
exists a linear map $H^{U}_{F}:\mathcal{F}_{U}\cap L^{\infty}(X,m)\to\mathcal{F}_{U}$
such that for any $f\in\mathcal{F}_{U}\cap L^{\infty}(X,m)$ with $f\geq 0$,
$H^{U}_{F}(f)=f$ $\mathcal{E}$-q.e.\ on $F$,
$H^{U}_{F}(f)$ is $\mathcal{E}$-harmonic on $U\setminus F$ and
$0\leq H^{U}_{F}(f)\leq\|f\|_{L^{\infty}(X,m)}$ $\mathcal{E}$-q.e.
\end{lemma}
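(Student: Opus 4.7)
The plan is to construct $H^U_F(f)$ as a weak limit, as $\alpha\downarrow 0$, of the $\mathcal{E}_\alpha$-harmonic extensions. Since $\mathcal{F}_{U\setminus F}$ is closed in the Hilbert space $(\mathcal{F},\mathcal{E}_1)$ by \cite[Theorem 2.1.4]{FOT}, for each $\alpha>0$ and each $f\in\mathcal{F}_U\cap L^\infty(X,m)$ I let $u_\alpha$ be the unique $\mathcal{E}_\alpha$-nearest point to $0$ in the closed affine subset $f+\mathcal{F}_{U\setminus F}$; equivalently, $u_\alpha-f\in\mathcal{F}_{U\setminus F}$ and $\mathcal{E}_\alpha(u_\alpha,v)=0$ for every $v\in\mathcal{F}_{U\setminus F}$. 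By construction $u_\alpha\in\mathcal{F}_U$, $u_\alpha=f$ $\mathcal{E}$-q.e.\ on $F$, and $f\mapsto u_\alpha$ is linear.

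For non-negative $f$ a Markovian truncation yields the $L^\infty$-bound: using quasi-continuous representatives, the normal contraction $\tilde u_\alpha:=(u_\alpha\vee 0)\wedge\|f\|_{L^\infty(X,m)}$ still lies in $f+\mathcal{F}_{U\setminus F}$, since $\tilde u_\alpha-f$ vanishes $\mathcal{E}$-q.e.\ on $F\cup(X\setminus U)$ (using $0\leq f\leq\|f\|_{L^\infty(X,m)}$ $\mathcal{E}$-q.e.\ on $F$ and $f=0$ $\mathcal{E}$-q.e.\ on $X\setminus U$), so by the unit contraction property $\mathcal{E}_\alpha(\tilde u_\alpha,\tilde u_\alpha)\leq\mathcal{E}_\alpha(u_\alpha,u_\alpha)$, forcing $\tilde u_\alpha=u_\alpha$ by uniqueness of the minimizer, and hence $0\leq u_\alpha\leq\|f\|_{L^\infty(X,m)}$ $\mathcal{E}$-q.e. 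Combined with $\mathcal{E}(u_\alpha,u_\alpha)\leq\mathcal{E}_\alpha(u_\alpha,u_\alpha)\leq\mathcal{E}_\alpha(f,f)\leq\mathcal{E}_1(f,f)$ and $\|u_\alpha\|_{L^2(X,m)}\leq\|f\|_{L^\infty(X,m)}\sqrt{m(U)}$ (from $m(U)<\infty$), the family $\{u_\alpha\}_{\alpha\in(0,1]}$ is bounded in $(\mathcal{F},\mathcal{E}_1)$. Taking a weakly convergent subsequence $u_{\alpha_n}\rightharpoonup u$ with $\alpha_n\downarrow 0$, weak closedness gives $u\in f+\mathcal{F}_{U\setminus F}$; Mazur's lemma produces convex combinations converging strongly in $(\mathcal{F},\mathcal{E}_1)$ and hence, along a further subsequence, $m$-a.e., preserving the $L^\infty$-bound; and for each $v\in\mathcal{F}_{U\setminus F}$ the identity $\mathcal{E}(u_{\alpha_n},v)=-\alpha_n\langle u_{\alpha_n},v\rangle_{L^2(X,m)}\to 0$, together with the $\mathcal{E}_1$-continuity of $w\mapsto\mathcal{E}(w,v)$ (via $|\mathcal{E}(w,v)|\leq\mathcal{E}(v,v)^{1/2}\mathcal{E}_1(w,w)^{1/2}$), yields $\mathcal{E}(u,v)=0$; hence $u$ is $\mathcal{E}$-harmonic on $U\setminus F$.

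The main obstacle is that this weak limit $u$ may depend on the chosen subsequence when $\mathcal{H}_0:=\{w\in\mathcal{F}_{U\setminus F}\mid\mathcal{E}(w,w)=0\}$ is non-trivial (which can happen without further structural hypotheses such as transience of the part Dirichlet form on $U\setminus F$), so the argument above delivers only existence of $\mathcal{E}$-harmonic extensions and not yet a linear map. To remedy this I would fix a free ultrafilter $\mathcal{U}$ on $(0,1]$ containing every tail $(0,\alpha_{0})$ with $\alpha_{0}\in(0,1]$, and define $H^U_F(f)$ to be the $\mathcal{U}$-weak limit of the net $(u_\alpha)_{\alpha\in(0,1]}$ in $(\mathcal{F},\mathcal{E}_1)$, which exists by reflexivity of this Hilbert space together with the uniform $\mathcal{E}_1$-bound. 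Because $f\mapsto u_\alpha$ is linear for each fixed $\alpha$ and $\mathcal{U}$-weak limits commute with linear combinations, $H^U_F$ is linear, while the weak-closedness and Mazur-type arguments of the previous paragraph transfer verbatim to $\mathcal{U}$-weak limits to furnish $H^U_F(f)\in\mathcal{F}_U$, the boundary identity $H^U_F(f)=f$ $\mathcal{E}$-q.e.\ on $F$, the $\mathcal{E}$-harmonicity on $U\setminus F$, and the $L^\infty$-bound $0\leq H^U_F(f)\leq\|f\|_{L^\infty(X,m)}$ for non-negative $f$.
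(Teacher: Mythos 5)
The proposal is correct, but it takes a genuinely different route from the paper. The paper simply invokes the hitting-distribution operator $H_B$ of \cite[Theorem 4.3.2]{FOT} with $B:=F\cup(X\setminus U)$: this is a canonical (probabilistically defined, via the expectation $\mathbb{E}_x[f(X_{\sigma_B})]$) linear map on the extended Dirichlet space $\mathcal{F}_e$, and the required properties — the boundary identity on $B$, the $L^\infty$-bound, the membership in $\mathcal{F}_U$ (using $m(U)<\infty$), and the $\mathcal{E}$-harmonicity on $X\setminus B$ — all follow from existing FOT results (Theorems 4.1.1, 4.1.3, 4.2.1, 4.6.5, A.2.6 and Lemma 2.1.4). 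Your construction instead works purely variationally: you solve the $\alpha$-harmonic obstacle problem, obtaining the $\mathcal{E}_\alpha$-orthogonal projection $u_\alpha$, establish the $L^\infty$-bound by a normal contraction applied to the minimizer (this is a clean argument), extract a uniform $\mathcal{E}_1$-bound using $m(U)<\infty$, and then pass to a weak limit as $\alpha\downarrow 0$. This is in effect a functional-analytic shadow of what FOT does (your $u_\alpha$ is FOT's $H^\alpha_B f$ and the limit is $H^0_B f=H_Bf$), but you deliberately avoid the probabilistic representation. The price you pay is that without the probabilistic representation there is no canonical choice of limit when $\mathcal{H}_0:=\{w\in\mathcal{F}_{U\setminus F}:\mathcal{E}(w,w)=0\}$ is non-trivial, and you compensate with an ultrafilter selection — logically sound (the $\mathcal{U}$-weak limit exists by weak compactness of bounded sets in the reflexive Hilbert space $(\mathcal{F},\mathcal{E}_1)$, respects weakly closed convex constraints by Hahn--Banach separation, and commutes with linear combinations, so linearity of $H^U_F$ is preserved), but heavier machinery than the paper's direct citation. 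Two small remarks: the closedness of $\mathcal{F}_{U\setminus F}$ in $(\mathcal{F},\mathcal{E}_1)$ does not literally follow from \cite[Theorem 2.1.4]{FOT} (which concerns q.e.\ convergence of subsequences of Ces\`{a}ro means); you use that theorem in combination with the definition of $\mathcal{F}_V$, so the citation is only indirect. Also, the appeal to Mazur's lemma to transfer the $L^\infty$-bound to the $\mathcal{U}$-weak limit is unnecessary — the bound follows directly from the fact that the order interval $\{w:0\le w\le\|f\|_{L^\infty(X,m)}\ \textrm{$m$-a.e.}\}$ is closed and convex in $L^2(X,m)$, hence weakly closed, and $\mathcal{U}$-weak limits in $(\mathcal{F},\mathcal{E}_1)$ are also $\mathcal{U}$-weak limits in $L^2(X,m)$.
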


\begin{proof}
Let $H^{U}_{F}$ be the map $H_{B}$ defined in \cite[Theorem 4.3.2]{FOT} with
$B:=F\cup(X\setminus U)$. It is a linear map from the \emph{extended Dirichlet space}
$\mathcal{F}_{e}$ to itself by \cite[Theorem 4.6.5]{FOT}, and for any $f\in\mathcal{F}_{e}$
with $f\geq 0$ we have $0\leq H^{U}_{F}(f)\leq\|f\|_{L^{\infty}(X,m)}$ $\mathcal{E}$-q.e.\ by
\cite[Lemma 2.1.4, Theorem 4.2.1-(ii) and Theorem 4.1.1]{FOT} and
$H^{U}_{F}(f)=f$ $\mathcal{E}$-q.e.\ on $B$ by
\cite[Theorem A.2.6-(i), Theorem 4.1.3 and Theorem 4.2.1-(ii)]{FOT}.
In particular, for any $f\in\mathcal{F}_{U}\cap L^{\infty}(X,m)$,
$H^{U}_{F}(f)=H^{U}_{F}(f^{+})-H^{U}_{F}(f^{-})\in L^{\infty}(X,m)$,
$H^{U}_{F}(f)=f$ $\mathcal{E}$-q.e.\ on $F$,
$H^{U}_{F}(f)=f=0$ $\mathcal{E}$-q.e.\ on $X\setminus U$, hence
$H^{U}_{F}(f)\in \mathcal{F}_{e}\cap L^{2}(X,m)=\mathcal{F}$
by $m(U)<\infty$ and \cite[Theorem 1.5.2-(iii)]{FOT},
thus $H^{U}_{F}(f)\in\mathcal{F}_{U}$, and $H^{U}_{F}(f)$ is $\mathcal{E}$-harmonic
on $X\setminus B=U\setminus F$ for any $f\in\mathcal{F}_{U}\cap L^{\infty}(X,m)$
by \cite[Theorem 4.6.5]{FOT}, completing the proof.
\end{proof}

\begin{proposition} \label{p:approx-harmonic}
Let $(X,d,m,\mathcal{E},\mathcal{F})$ be a MMD space.
Let $f\in\mathcal{F}\cap\mathcal{C}_{\mathrm{c}}(X)$ satisfy $f \geq 0$, and
for each $n\in\mathbb{N}$ set $F_{n}:=f^{-1}(2^{-n}\mathbb{Z})$ and define
$f_{n}\in\mathcal{F}_{X\setminus f^{-1}(0)}\cap L^{\infty}(X,m)$ by
\begin{equation}\label{e:approx-harmonic}
f_{n}=\sum_{k\in\mathbb{Z}\cap[0,2^{n}\|f\|_{\sup}]}f_{n,k},\mspace{11.38mu}
	\textrm{where}\mspace{11.38mu}f_{n,k}:=H^{f^{-1}((k2^{-n},\infty))}_{f^{-1}([(k+1)2^{-n},\infty))}\bigl((f-k2^{-n})^{+}\wedge 2^{-n}\bigr).
\end{equation}
Then for any $n\in\mathbb{N}$, $f_{n}=f$ $\mathcal{E}$-q.e.\ on $F_{n}$,
$f_{n}$ is $\mathcal{E}$-harmonic on $X\setminus F_{n}$, $\Gamma(f_{n},f_{n})(F_{n})=0$
and $|f-f_{n}|\leq 2^{-n}\one_{X\setminus f^{-1}(0)}$ $\mathcal{E}$-q.e.
Moreover, $\lim_{n\to\infty}\mathcal{E}_{1}(f-f_{n},f-f_{n})=0$.
\end{proposition}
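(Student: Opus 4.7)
The plan is to verify each assertion by analyzing the individual pieces $f_{n,k}$ and then assembling them. First, I would check that Lemma \ref{l:harmonic-ext} applies to produce each $f_{n,k}$: the set $U_{n,k}=f^{-1}((k2^{-n},\infty))$ is open by continuity of $f$ and satisfies $m(U_{n,k})\leq m(\supp f)<\infty$ since $f\in\mathcal{C}_{\mathrm{c}}(X)$ and $k\geq 0$; the truncated function $g_{n,k}:=(f-k2^{-n})^{+}\wedge 2^{-n}$ belongs to $\mathcal{F}_{U_{n,k}}\cap L^{\infty}(X,m)$ by the Markovian property applied to the Lipschitz map $t\mapsto(t-k2^{-n})^{+}\wedge 2^{-n}$ (which sends $0$ to $0$ since $k\geq 0$) together with the continuity of $f$ at $\partial U_{n,k}$. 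Lemma \ref{l:harmonic-ext} then yields $f_{n,k}\in\mathcal{F}_{U_{n,k}}$ with $0\leq f_{n,k}\leq 2^{-n}$ $\mathcal{E}$-q.e., $f_{n,k}=2^{-n}$ $\mathcal{E}$-q.e.\ on $F_{n,k}$, $f_{n,k}=0$ $\mathcal{E}$-q.e.\ on $X\setminus U_{n,k}$, and $f_{n,k}$ $\mathcal{E}$-harmonic on $V_k:=U_{n,k}\setminus F_{n,k}=\{k2^{-n}<f<(k+1)2^{-n}\}$.

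The identity $f_n=f$ $\mathcal{E}$-q.e.\ on $F_n$ and the bound $|f-f_n|\leq 2^{-n}\one_{X\setminus f^{-1}(0)}$ then follow by pointwise bookkeeping: if $f(x)=j2^{-n}$, then $f_{n,k}(x)=2^{-n}$ $\mathcal{E}$-q.e.\ for $k<j$ (as $x\in F_{n,k}$) and $f_{n,k}(x)=0$ $\mathcal{E}$-q.e.\ for $k\geq j$ (as $x\notin U_{n,k}$), summing to $j\cdot 2^{-n}=f(x)$; if instead $f(x)\in(j2^{-n},(j+1)2^{-n})$, the analogous accounting gives $f_n(x)=j\cdot 2^{-n}+f_{n,j}(x)$ $\mathcal{E}$-q.e.\ with $f_{n,j}(x)\in[0,2^{-n}]$. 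For the $\mathcal{E}$-harmonicity of $f_n$ on $X\setminus F_n=\bigsqcup_{j}V_j$, I would take $g\in\mathcal{F}\cap\mathcal{C}_{\mathrm{c}}(X)$ with $\supp g$ compact in a single $V_j$ and split $\mathcal{E}(f_n,g)=\sum_{k}\mathcal{E}(f_{n,k},g)$: the diagonal term $k=j$ vanishes by harmonicity of $f_{n,j}$ on $V_j$, while for $k\neq j$ the function $f_{n,k}$ is constant $\mathcal{E}$-q.e.\ on the open set $V_j$, so $\Gamma(f_{n,k},f_{n,k})(V_j)=0$ by a standard consequence of strong locality (cf.\ \cite[Corollary 3.2.1]{FOT}), and $g=0$ on the open set $X\setminus\supp g\supset X\setminus V_j$, so $\Gamma(g,g)(X\setminus V_j)=0$; two applications of Cauchy--Schwarz \eqref{e:Cauchy-Schwarz-energy} yield $\mathcal{E}(f_{n,k},g)=\Gamma(f_{n,k},g)(X)=0$.

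The identity $\Gamma(f_n,f_n)(F_n)=0$ I would derive from the ``Sard-type'' property of energy measures: for any $u\in\mathcal{F}$ (or $\mathcal{F}_{\on{loc}}$) and $c\in\mathbb{R}$, $\Gamma(u,u)(\{u=c\})=0$. This follows from the decomposition $u-c=(u-c)^{+}-(c-u)^{+}$ and the chain rule, which gives $\Gamma((u-c)^{+},(u-c)^{+})=\one_{\{u>c\}}\Gamma(u,u)$ and $\Gamma((c-u)^{+},(c-u)^{+})=\one_{\{u<c\}}\Gamma(u,u)$; expanding $\Gamma(u-c,u-c)=\Gamma(u,u)$ through $(u-c)^{+}-(c-u)^{+}$ yields $\one_{\{u=c\}}d\Gamma(u,u)=-2\,d\Gamma((u-c)^{+},(c-u)^{+})$, whose right-hand side vanishes on $\{u=c\}$ by Cauchy--Schwarz \eqref{e:Cauchy-Schwarz-energy} since both diagonal factors do. Applied to $u=f_n$ and $c=j2^{-n}$ this gives $\Gamma(f_n,f_n)(\{f_n=j2^{-n}\})=0$; combined with the inclusion $f^{-1}(j2^{-n})\subset\{f_n=j2^{-n}\}$ up to an $\mathcal{E}$-polar set (by the first claim) and the fact that $\Gamma(f_n,f_n)$ charges no polar set \cite[Lemma 3.2.4]{FOT}, summing over the finitely many relevant $j$ gives the desired identity.

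Finally, for the $\mathcal{E}_1$-convergence, I would use the $\mathcal{E}$-orthogonality implicit in the construction: by the first claim $f-f_n\in\mathcal{F}_{X\setminus F_n}$ and by the second $f_n$ is $\mathcal{E}$-harmonic on $X\setminus F_n$, hence $\mathcal{E}(f-f_n,f_n)=0$. Since $F_n\subset F_{n+1}$, the same reasoning applied to $(f_n,f_{n+1})$ yields $\mathcal{E}(f_{n+1}-f_n,f_n)=0$, and iterating gives $\mathcal{E}(f_{k+1}-f_k,f_{j+1}-f_j)=0$ for $k\neq j$. Therefore $(\mathcal{E}(f_n,f_n))_n$ is nondecreasing and bounded by $\mathcal{E}(f,f)$, the series $\sum_n\mathcal{E}(f_{n+1}-f_n,f_{n+1}-f_n)$ converges, and $\mathcal{E}(f_m-f_n,f_m-f_n)=\sum_{k=n}^{m-1}\mathcal{E}(f_{k+1}-f_k,f_{k+1}-f_k)\to 0$ as $n,m\to\infty$ by telescoping. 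Combined with $\|f-f_n\|_{L^{2}(X,m)}^{2}\leq 4^{-n}m(\supp f)\to 0$ (from the fourth claim and compactness of $\supp f$), this yields $f_n\to f$ in $(\mathcal{F},\mathcal{E}_1)$. The most delicate step is the Sard-type identity in the third claim, but it is a standard consequence of the chain rule for Lipschitz truncations in strongly local Dirichlet forms.
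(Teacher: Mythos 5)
Your proof follows the paper's plan closely: the same decomposition of $f_n$ into the pieces $f_{n,k}$ via Lemma \ref{l:harmonic-ext}, the same pointwise bookkeeping for $f_n=f$ $\mathcal{E}$-q.e.\ on $F_n$ and the bound $|f-f_n|\le 2^{-n}\one_{X\setminus f^{-1}(0)}$, and the same $\mathcal{E}$-orthogonality argument (in the paper, the identities $\mathcal{E}(f,f_{n})=\mathcal{E}(f_{n},f_{n})=\mathcal{E}(f_{k},f_{n})$ for $n\le k$) driving the Cauchy property and $\mathcal{E}_1$-convergence.

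There is, however, a genuine gap in your derivation of the Sard-type identity $\Gamma(u,u)(\{u=c\})=0$. You base it on the chain-rule formula $\Gamma((u-c)^+,(u-c)^+)=\one_{\{u>c\}}\Gamma(u,u)$, but this is not a consequence of the chain rule for $C^1$ maps \cite[Theorem 3.2.2]{FOT}: the map $t\mapsto(t-c)^+$ is only Lipschitz, and the formula as you write it, with a precise choice of the derivative at the non-differentiable point $t=c$, already encodes that $\Gamma((u-c)^+,(u-c)^+)$ does not charge $\{u=c\}=\{(u-c)^+=0\}$. That is exactly the energy image density property for $(u-c)^+$ at level $0$, i.e.\ the very thing you are trying to prove; the Lipschitz chain rule with a definite value of $\phi'$ on the exceptional level set is equivalent to, not more elementary than, the Sard-type identity. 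The paper avoids this circularity by invoking the absolute continuity of $\Gamma(f_n,f_n)\circ f_n^{-1}$ with respect to Lebesgue measure on $\mathbb{R}$, citing \cite[Theorem 4.3.8]{CF} together with strong locality; that theorem is a nontrivial result established by stochastic methods, and you should cite it (or an equivalent energy image density statement) rather than attempt the chain-rule derivation. A minor further point: your harmonicity argument only treats test functions $g$ with support in a single $V_j$; to conclude $\mathcal{E}$-harmonicity on all of $X\setminus F_n$ one must still decompose a general $g$ with $\supp_m[g]\subset X\setminus F_n$ into pieces $g\one_{V_j}$ and verify each piece lies in $\mathcal{F}\cap\mathcal{C}_{\mathrm{c}}(X)$. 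This is routine (the support has positive distance to $F_n$, and the $V_j$ are clopen in $X\setminus F_n$), but it deserves a line; the paper's variant, showing each $f_{n,k}$ is harmonic on the larger set $X\setminus f^{-1}(\{k2^{-n},(k+1)2^{-n}\})$ by splitting $g$ into only three pieces $g\one_U$, is slightly cleaner for this reason.
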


\begin{proof}
Let $n\in\mathbb{N}$ and $k\in\mathbb{Z}\cap[0,2^{n}\|f\|_{\sup}]$. Since
$(f-k2^{-n})^{+}\wedge 2^{-n}\mspace{-1mu}\in\mspace{-1mu}\mathcal{F}_{f^{-1}((k2^{-n},\infty))}\mspace{-1mu}\cap\mathcal{C}_{\mathrm{c}}(X)$
by \cite[Theorem 1.4.1]{FOT}, we immediately see from Lemma \ref{l:harmonic-ext}
that $f_{n,k}$ is a well-defined element of $\mathcal{F}_{f^{-1}((k2^{-n},\infty))}$,
is $\mathcal{E}$-harmonic on $f^{-1}((k2^{-n},(k+1)2^{-n}))$ and satisfies
\begin{equation}\label{e:fnk-constant}
0\leq f_{n,k}\leq 2^{-n}\textrm{ $\mathcal{E}$-q.e.}\quad\textrm{and}\quad
f_{n,k}=
	\begin{cases}
		0 & \textrm{$\mathcal{E}$-q.e.\ on $f^{-1}([0,k2^{-n}])$,}\\
		2^{-n} & \textrm{$\mathcal{E}$-q.e.\ on $f^{-1}([(k+1)2^{-n},\infty))$.}
	\end{cases}
\end{equation}
In particular, $f_{n,k}$ is $\mathcal{E}$-harmonic on $X\setminus f^{-1}(\{k2^{-n},(k+1)2^{-n}\})$
by the strong locality of $(\mathcal{E},\mathcal{F})$ and
the fact that $g\one_{U}\in\mathcal{F}\cap\mathcal{C}_{\mathrm{c}}(X)$ and
$\supp_{m}[g\one_{U}]\subset U$ for any $g\in\mathcal{F}\cap\mathcal{C}_{\mathrm{c}}(X)$
with $\supp_{m}[g]\subset X\setminus f^{-1}(\{k2^{-n},(k+1)2^{-n}\})$ by
\cite[Exercise 1.4.1 and Theorem 1.4.2-(ii)]{FOT}, where $U$ denotes any one of
$f^{-1}([0,k2^{-n}))$, $f^{-1}((k2^{-n},(k+1)2^{-n}))$ and $f^{-1}(((k+1)2^{-n},\infty))$.
Thus $f_{n}\in\mathcal{F}_{X\setminus f^{-1}(0)}\cap L^{\infty}(X,m)$, $f_{n}$ is
$\mathcal{E}$-harmonic on $X\setminus F_{n}$, and it easily follows from \eqref{e:fnk-constant}
that $|f-f_{n}|\leq 2^{-n}\one_{X\setminus f^{-1}(0)}$ $\mathcal{E}$-q.e.\ and that
$f_{n}=f\in 2^{-n}\mathbb{Z}$ $\mathcal{E}$-q.e.\ on $F_{n}$, whence
$\Gamma(f_{n},f_{n})(F_{n})\leq\Gamma(f_{n},f_{n})(f_{n}^{-1}(2^{-n}\mathbb{Z}))=0$
by the absolute continuity of $\Gamma(f_{n},f_{n})(f_{n}^{-1}(\cdot))$ with respect to
the Lebesgue measure on $\mathbb{R}$ deduced from the strong locality of
$(\mathcal{E},\mathcal{F})$ and \cite[Theorem 4.3.8]{CF}.
Also, integrating the inequality $|f-f_{n}|^{2}\leq 4^{-n}\one_{X\setminus f^{-1}(0)}$ yields
$\|f-f_{n}\|_{L^{2}(X,m)}\leq 2^{-n}m(X\setminus f^{-1}(0))^{1/2}\xrightarrow{n\to\infty}0$.

Finally, for any $n,k\in\mathbb{N}$ with $n\leq k$, we have
$\mathcal{E}(f,f_{n})=\mathcal{E}(f_{n},f_{n})=\mathcal{E}(f_{k},f_{n})$
by the $\mathcal{E}$-harmonicity of $f_{n}$ on $X\setminus F_{n}$,
$f=f_{n}=f_{k}$ $\mathcal{E}$-q.e.\ on $F_{n}$ and \eqref{e:harmonic}, and therefore
\begin{gather}\label{e:approx-harmonic-bounded}
\mathcal{E}(f,f)=\mathcal{E}(f_{n},f_{n})+\mathcal{E}(f-f_{n},f-f_{n})\geq\mathcal{E}(f_{n},f_{n}),\\
\mathcal{E}(f_{k},f_{k})-\mathcal{E}(f_{n},f_{n})=\mathcal{E}(f_{k}-f_{n},f_{k}-f_{n})\geq 0.
\label{e:approx-harmonic-nondec}
\end{gather}
Then $\{\mathcal{E}(f_{n},f_{n})\}_{n=1}^{\infty}\subset[0,\mathcal{E}(f,f)]$ by
\eqref{e:approx-harmonic-bounded}, it is non-decreasing by \eqref{e:approx-harmonic-nondec}
and hence converges in $\mathbb{R}$, which together with \eqref{e:approx-harmonic-nondec}
and $\lim_{n\to\infty}\|f-f_{n}\|_{L^{2}(X,m)}=0$ implies that $\{f_{n}\}_{n=1}^{\infty}$
is a Cauchy sequence in the Hilbert space $(\mathcal{F},\mathcal{E}_{1})$. So
$\lim_{n\to\infty}\mathcal{E}_{1}(g-f_{n},g-f_{n})=0$ for some $g\in\mathcal{F}$,
which has to coincide with $f$ by $\lim_{n\to\infty}\|f-f_{n}\|_{L^{2}(X,m)}=0$.
\end{proof}

As mentioned above, we now prove the following proposition as the last main step.

\begin{proposition} \label{p:approx}
Let $(X,d,m,\mathcal{E},\mathcal{F})$ be a MMD space, and assume that
$\Gamma(h,h)|_{U} \perp m|_{U}$ for any open subset $U$ of $X$ and any
$h\in\mathcal{F}\cap L^{\infty}(X,m)$ that is $\mathcal{E}$-harmonic on $U$.
Then $\Gamma(f,f) \perp m$ for all $f \in \mathcal{F}$.
\end{proposition}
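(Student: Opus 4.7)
The plan is to reduce the assertion, by a standard three-stage approximation, to the piecewise-harmonic functions $f_n$ constructed in Proposition~\ref{p:approx-harmonic} and then to apply the hypothesis on a natural countable cover by open sets on which $f_n$ is $\mathcal{E}$-harmonic.

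First I would reduce to the case $f \in \mathcal{F} \cap \mathcal{C}_{\mathrm{c}}(X)$ with $f \geq 0$. By regularity of $(\mathcal{E},\mathcal{F})$, the space $\mathcal{F} \cap \mathcal{C}_{\mathrm{c}}(X)$ is $\mathcal{E}_{1}$-dense in $\mathcal{F}$, so Lemma~\ref{l:cont-ac-sing}-(b) reduces the proof to $g \in \mathcal{F} \cap \mathcal{C}_{\mathrm{c}}(X)$. Writing $g = g^{+} - g^{-}$ with $g^{\pm} \in \mathcal{F} \cap \mathcal{C}_{\mathrm{c}}(X)$ (Markovianity together with continuity of $g$), Lemma~\ref{l:lin}-(b) further reduces the problem to nonnegative $f \in \mathcal{F} \cap \mathcal{C}_{\mathrm{c}}(X)$.

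Next I would invoke Proposition~\ref{p:approx-harmonic} to obtain $\{f_{n}\}_{n=1}^{\infty} \subset \mathcal{F} \cap L^{\infty}(X,m)$ with $\lim_{n \to \infty} \mathcal{E}_{1}(f-f_{n},f-f_{n}) = 0$, such that $f_{n}$ is $\mathcal{E}$-harmonic on $X \setminus F_{n}$ and $\Gamma(f_{n},f_{n})(F_{n}) = 0$, where $F_{n} = f^{-1}(2^{-n}\mathbb{Z})$. By Lemma~\ref{l:cont-ac-sing}-(b) again, it then suffices to show $\Gamma(f_{n},f_{n}) \perp m$ for each fixed $n$. For this, observe that $X \setminus F_{n}$ decomposes as the disjoint countable union of the open sets $U_{n,k} := f^{-1}\bigl((k2^{-n},(k+1)2^{-n})\bigr)$, $k \in \mathbb{Z}$. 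Since $f_{n}$ is $\mathcal{E}$-harmonic on each $U_{n,k}$ and belongs to $\mathcal{F} \cap L^{\infty}(X,m)$, the standing hypothesis yields $\Gamma(f_{n},f_{n})|_{U_{n,k}} \perp m|_{U_{n,k}}$ for every $k$. Choose Borel sets $E_{n,k}, N_{n,k} \subset U_{n,k}$ with $U_{n,k} = E_{n,k} \sqcup N_{n,k}$, $\Gamma(f_{n},f_{n})(E_{n,k}) = 0$, and $m(N_{n,k}) = 0$. Setting $E := F_{n} \cup \bigcup_{k} E_{n,k}$, countable additivity gives $\Gamma(f_{n},f_{n})(E) = 0$ (using $\Gamma(f_{n},f_{n})(F_{n}) = 0$), while $m(X \setminus E) = \sum_{k} m(N_{n,k}) = 0$. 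Hence $\Gamma(f_{n},f_{n}) \perp m$, as desired.

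There is no serious obstacle: the real work is already contained in Proposition~\ref{p:approx-harmonic} (the piecewise-harmonic approximation, including the crucial fact $\Gamma(f_{n},f_{n})(F_{n}) = 0$) and in Lemmas~\ref{l:lin} and \ref{l:cont-ac-sing}. The only point that requires any vigilance is not losing the level sets $F_{n}$ at the last step, which is handled automatically by the vanishing of $\Gamma(f_{n},f_{n})$ on $F_{n}$ guaranteed by Proposition~\ref{p:approx-harmonic}.
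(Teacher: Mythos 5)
Your proof is correct and follows essentially the same route as the paper: reduce to nonnegative $f\in\mathcal{F}\cap\mathcal{C}_{\mathrm{c}}(X)$ via Lemmas~\ref{l:cont-ac-sing}-(b) and~\ref{l:lin}-(b), invoke Proposition~\ref{p:approx-harmonic} for the piecewise-harmonic approximants $f_n$, and pass to the limit with Lemma~\ref{l:cont-ac-sing}-(b) again. The one cosmetic difference is in the middle step: the paper applies the standing hypothesis once to the single open set $U = X\setminus F_n$ and then combines with $\Gamma(f_n,f_n)(F_n)=0$, whereas you first decompose $X\setminus F_n$ into the level strips $U_{n,k}$ and glue the singular supports together; both are valid, the paper's being marginally more direct.
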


\begin{proof}
Since $\mathcal{F}\cap\mathcal{C}_{\mathrm{c}}(X)$ is norm dense in
$(\mathcal{F},\mathcal{E}_{1})$ by the regularity of $(\mathcal{E},\mathcal{F})$,
in view of Lemma \ref{l:cont-ac-sing}-(b) it suffices to consider the case of
$f\in\mathcal{F}\cap\mathcal{C}_{\mathrm{c}}(X)$. Also, writing
$f=f^{+}-f^{-}$ and noting that $f^{+},f^{-}\in\mathcal{F}\cap\mathcal{C}_{\mathrm{c}}(X)$
by \cite[Theorem 1.4.2-(i)]{FOT}, thanks to Lemma \ref{l:lin}-(b) we may assume without
loss of generality that $f\geq 0$.

Then for each $n\in\mathbb{N}$, setting $F_{n}:=f^{-1}(2^{-n}\mathbb{Z})$ and
defining $f_{n}\in\mathcal{F}_{X\setminus f^{-1}(0)}\cap L^{\infty}(X,m)$
by \eqref{e:approx-harmonic}, we have $\Gamma(f_{n},f_{n})(F_{n})=0$ and
the $\mathcal{E}$-harmonicity of $f_{n}$ on $X\setminus F_{n}$ by
Proposition \ref{p:approx-harmonic}, and therefore the assumption yields
$\Gamma(f_{n},f_{n})|_{X\setminus F_{n}}\perp m|_{X\setminus F_{n}}$, which
together with $\Gamma(f_{n},f_{n})(F_{n})=0$ implies $\Gamma(f_{n},f_{n}) \perp m$.
Now $\Gamma(f,f) \perp m$ follows by this fact, the norm convergence
$\lim_{n\to\infty}\mathcal{E}_{1}(f-f_{n},f-f_{n})=0$ from
Proposition \ref{p:approx-harmonic} and Lemma \ref{l:cont-ac-sing}-(b).
\end{proof}

\begin{proof}[Proof of Theorem \textup{\ref{t:main}-(a)}]
It is easy to verify that \hyperlink{vd}{$\on{VD}$} is preserved under a bi-Lipschitz change of
the metric and that so is \hyperlink{pi}{$\operatorname{PI}(\Psi)$} provided $\Psi$ satisfies
Assumption \ref{a:reg}. The same holds also for \hyperlink{cs}{$\operatorname{CS}(\Psi)$}
under Assumption \ref{a:reg} for $\Psi$ and \hyperlink{vd}{$\on{VD}$} by \cite[Lemma 5.7]{AB};
to be precise, here we need to use a slight variant of \cite[Lemma 5.7]{AB}
with the radius $r/2$ in its assumption replaced by $r/(2C^{2})$
for the constant $C\geq 1$ in the bi-Lipschitz equivalence of the metrics,
but \cite[Proof of Lemma 5.7]{AB} works also for this variant. Therefore using
Proposition \ref{p:cc}, we may assume without loss of generality that $d$ is geodesic,
and now it follows from Propositions \ref{p:harmonic} and \ref{p:approx} that
$\Gamma(f,f) \perp m$ for all $f \in \mathcal{F}$. In particular, for any
$f \in \mathcal{F}_{\on{loc}} \cap \mathcal{C}(X)$ with $\Gamma(f,f) \leq m$, we have
$\Gamma(f,f)(X)=0$, which together with \hyperlink{pi}{$\operatorname{PI}(\Psi)$}
and the relative compactness of $B(x,r)$ in $X$ for all $(x,r)\in X\times(0,\infty)$
implies that $f=a\one_{X}$ for some $a\in\mathbb{R}$.
Thus $d_{\on{int}}(x,y)=0$ for any $x,y\in X$ by \eqref{e:dint}.
\end{proof}

The above proof of Theorem \ref{t:main}-(a) easily extends to the more general situation
where the Poincar\'{e} inequality \hyperlink{pi}{$\operatorname{PI}(\Psi_{\on{PI}})$}
and the cutoff Sobolev inequality \hyperlink{cs}{$\operatorname{CS}(\Psi_{\on{CS}})$}
are assumed to hold with respect to possibly different space-time scale functions
$\Psi_{\on{PI}}$ and $\Psi_{\on{CS}}$, as follows.

\begin{theorem} \label{t:sing-two-scales}
Let $\Psi_{\on{PI}},\Psi_{\on{CS}}:[0,\infty)\to[0,\infty)$ be homeomorphisms satisfying
Assumption \textup{\ref{a:reg}} and let $(X,d,m,\mathcal{E},\mathcal{F})$ be a MMD space
satisfying \hyperlink{vd}{$\on{VD}$}, \hyperlink{pi}{$\operatorname{PI}(\Psi_{\on{PI}})$}
and \hyperlink{cs}{$\operatorname{CS}(\Psi_{\on{CS}})$}.
Assume further that $(X,d)$ satisfies the chain condition and that
\begin{equation} \label{e:case-sing-two-scales}
\liminf_{\lambda \to \infty} \liminf_{r \downarrow 0}
	\frac{\lambda^{2} \Psi_{\on{PI}}(r/\lambda)}{\Psi_{\on{CS}}(r)}=0.
\end{equation}
Then $\Gamma(f,f) \perp m$ for all $f \in \mathcal{F}$.
\end{theorem}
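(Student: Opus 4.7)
The plan is to retrace the proof of Theorem \ref{t:main}-(a) while carefully tracking which space-time scale function appears in which estimate, and then observe that the condition \eqref{e:case-sing-two-scales} is exactly what is needed to run the final contradiction step. As in the proof of Theorem \ref{t:main}-(a), I would first reduce to the case where $d$ is geodesic. The hypotheses \hyperlink{vd}{$\on{VD}$}, \hyperlink{pi}{$\operatorname{PI}(\Psi_{\on{PI}})$} and \hyperlink{cs}{$\operatorname{CS}(\Psi_{\on{CS}})$} are all preserved under bi-Lipschitz changes of the metric (using \cite[Lemma 5.7]{AB} with the minor radius adjustment noted in the proof of Theorem \ref{t:main}-(a)), so by Proposition \ref{p:cc} we may assume $d$ is geodesic.

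The architecture is three-tiered: prove singularity for bounded $\mathcal{E}$-harmonic functions on open sets (analogue of Proposition \ref{p:harmonic}), extend to $\mathcal{F}\cap\mathcal{C}_{\mathrm{c}}(X)$ via piecewise harmonic approximation (Propositions \ref{p:approx-harmonic} and \ref{p:approx}), and then to $\mathcal{F}$ by density. The second and third tiers do not involve the scale function at all, so Proposition \ref{p:approx-harmonic}, Proposition \ref{p:approx}, Lemma \ref{l:lin} and Lemma \ref{l:cont-ac-sing} apply verbatim. Moreover, Lemma \ref{l:reverse-pi} depends only on \hyperlink{cs}{$\operatorname{CS}(\cdot)$}, so applied with $\Psi_{\on{CS}}$ it yields a reverse Poincar\'e inequality with $\Psi_{\on{CS}}(r)$ on the denominator.

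The heart of the argument is the analogue of Proposition \ref{p:harmonic}. I would repeat the proof verbatim, but using \hyperlink{pi}{$\operatorname{PI}(\Psi_{\on{PI}})$} wherever the original proof invoked the Poincar\'e inequality and Lemma \ref{l:reverse-pi} (applied with $\Psi_{\on{CS}}$) wherever it invoked the reverse Poincar\'e inequality. Arguing by contradiction via Lemma \ref{l:zoom} as before, the telescoping estimate \eqref{e:hm2} becomes
\begin{equation*}
\bigl|h_{B(y_{1},r/\lambda)}-h_{B(y_{2},r/\lambda)}\bigr| \lesssim \lambda\sqrt{\Psi_{\on{PI}}(r/\lambda)},
\end{equation*}
and summing over an $r/\lambda$-net as in \eqref{e:hm3} yields the upper bound
\begin{equation*}
\int_{B(x,r)}|h-h_{B(x,r)}|^{2}\,dm \lesssim \lambda^{2}\Psi_{\on{PI}}(r/\lambda)\,m(B(x,r)).
\end{equation*}
On the other hand, the reverse Poincar\'e inequality (Lemma \ref{l:reverse-pi} with $\Psi_{\on{CS}}$), together with \eqref{e:hm0} and \hyperlink{vd}{$\on{VD}$}, provides the matching lower bound
\begin{equation*}
\int_{B(x,r)}|h-h_{B(x,r)}|^{2}\,dm \gtrsim \Psi_{\on{CS}}(r)\,m(B(x,r)).
\end{equation*}
Combining these yields $\lambda^{2}\Psi_{\on{PI}}(r/\lambda)/\Psi_{\on{CS}}(r)\gtrsim 1$ for all sufficiently large $\lambda$ and all sufficiently small $r=r(x,\lambda)$, which directly contradicts \eqref{e:case-sing-two-scales}.

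Once the harmonic case is settled, Proposition \ref{p:approx} applies unchanged to promote the singularity to all $f\in\mathcal{F}$. I expect no serious obstacle: the proof is essentially a bookkeeping extension of the single-scale argument, the only subtle point being to verify that every application of PI and CS in the proof of Proposition \ref{p:harmonic} is cleanly attributable to exactly one of the two scales. Crucially, the two scales interact only in the terminal ratio $\lambda^{2}\Psi_{\on{PI}}(r/\lambda)/\Psi_{\on{CS}}(r)$, and the assumption \eqref{e:case-sing-two-scales} is precisely tailored to force the contradiction there.
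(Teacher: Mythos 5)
Your proposal is correct and takes exactly the same route as the paper, which likewise reduces to the geodesic case and observes that Proposition \ref{p:harmonic} extends by replacing $\Psi$ with $\Psi_{\on{PI}}$ in the Poincar\'e-based estimates \eqref{e:hm1}--\eqref{e:hm3} and with $\Psi_{\on{CS}}$ in the reverse-Poincar\'e estimate \eqref{e:hm4}, after which Propositions \ref{p:approx-harmonic} and \ref{p:approx} apply unchanged. The paper calls this extension ``straightforward'' without detailing the scale-tracking; your proposal simply spells out that bookkeeping explicitly.
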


\begin{proof}
It is straightforward to see that the proof of Proposition \ref{p:harmonic} extends to
the present situation under the additional assumption that $d$ is geodesic. The rest of
the proof goes in exactly the same way as the above proof of Theorem \ref{t:main}-(a).
\end{proof}

\section{Absolute continuity} \label{s:ac}
In this section, we give the proof of Theorem \ref{t:main}-(b), namely the
``mutual absolute continuity'' between the symmetric measure $m$ and
the energy measures under the assumption \eqref{e:case-ac}.
\emph{In this section we do NOT assume that $(X,d)$ satisfies the chain condition except
in Proposition \textup{\ref{p:bilip}}.} Recall that we always have $\diam(X,d)\in(0,\infty]$
for a metric measure space $(X,d,m)$ by our standing assumption that $\#X\geq 2$.

We begin with the following lemma, which shows that the estimate \eqref{e:case-ac}
can be upgraded to the Gaussian space-time scaling \eqref{e:ge2} at small scales.

\begin{lemma} \label{l:ge2}
Let $(X,d,m,\mathcal{E},\mathcal{F})$ be a MMD space satisfying \hyperlink{vd}{$\on{VD}$},
\hyperlink{pi}{$\on{PI}(\Psi)$} and \hyperlink{cs}{$\on{CS}(\Psi)$}, and assume further
that $\Psi$ satisfies \eqref{e:case-ac}. Then there exist $r_{1} \in (0,\diam(X,d))$
and $C_{1} \geq 1$ such that \eqref{e:ge2} holds.
\end{lemma}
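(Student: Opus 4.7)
My plan is to invoke \cite[Corollary 1.10]{Mur}, which I expect to yield, under the standing hypotheses \hyperlink{vd}{$\on{VD}$}, \hyperlink{pi}{$\on{PI}(\Psi)$}, and \hyperlink{cs}{$\on{CS}(\Psi)$}, the quasi-monotonicity of the ratio $r \mapsto \Psi(r)/r^{2}$ at small scales. Concretely, I expect a statement of the form: there exist constants $C_{0} \geq 1$ and $r_{0} \in (0, \diam(X,d))$ such that
\begin{equation} \label{e:ge2-qm}
\frac{\Psi(r)}{r^{2}} \leq C_{0} \frac{\Psi(R)}{R^{2}} \qquad \textrm{for all $0 < r \leq R \leq r_{0}$.}
\end{equation}
Heuristically this encodes the fact that the walk dimension associated with $\Psi$ is at least $2$; in the power-law case $\Psi(r) = r^{\beta}$ it is precisely the assertion that $r^{\beta - 2}$ is non-decreasing in $r$, equivalently $\beta \geq 2$. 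This direction of inequality is the ``easy'' one in walk-dimension theory and is traditionally proved by combining \hyperlink{pi}{$\on{PI}(\Psi)$} with either \hyperlink{cs}{$\on{CS}(\Psi)$} or a reverse Poincar\'e inequality applied to a well-chosen harmonic function on annular regions at two different scales.

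Granted \eqref{e:ge2-qm}, the upper bound in \eqref{e:ge2} is immediate: fix any $R_{0} \in (0, r_{0})$, and for $r \in (0, R_{0}]$ the bound \eqref{e:ge2-qm} applied to the pair $r \leq R_{0}$ gives $\Psi(r) \leq (C_{0} \Psi(R_{0})/R_{0}^{2})\, r^{2}$.

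For the matching lower bound I invoke the hypothesis \eqref{e:case-ac}: choose a sequence $r_{n} \downarrow 0$ and a constant $c_{0} > 0$ with $\Psi(r_{n})/r_{n}^{2} \geq c_{0}$ for all sufficiently large $n$. Given any $r \in (0, R_{0}]$, pick $r_{n} \in (0, r]$, which is always possible because $r_{n} \downarrow 0$. Then \eqref{e:ge2-qm} applied to the pair $r_{n} \leq r$ gives $c_{0} \leq \Psi(r_{n})/r_{n}^{2} \leq C_{0} \Psi(r)/r^{2}$, whence $\Psi(r) \geq (c_{0}/C_{0}) r^{2}$. Taking $C_{1} := C_{0} \max\!\bigl(\Psi(R_{0})/R_{0}^{2},\, 1/c_{0}\bigr)$ and $r_{1} := R_{0}$ yields \eqref{e:ge2}.

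The only substantive point to verify is that \cite[Corollary 1.10]{Mur} indeed delivers a quasi-monotonicity statement of the form \eqref{e:ge2-qm}. If the cited corollary is phrased in an equivalent but different form --- for instance as a walk-dimension lower bound $\beta_{0}' \geq 2$ for some exponent associated with $\Psi$, or via a direct comparison valid only along a particular scale sequence --- essentially the same scheme goes through: one combines the cited inequality with \eqref{e:case-ac} and the regularity \eqref{e:reg} of $\Psi$ to interpolate between scales and fill any gaps. This interpolation is the only spot where care is needed, but given Assumption \ref{a:reg} it is routine.
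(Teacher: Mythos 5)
Your proposal is correct and follows essentially the same route as the paper: \cite[Corollary 1.10]{Mur} does deliver precisely the quasi-monotonicity you anticipated, in the form $C_1^{-1} r^2/s^2 \leq \Psi(r)/\Psi(s)$ for all $0 < s \leq r < \diam(X,d)$, from which the upper bound in \eqref{e:ge2} is immediate and the lower bound follows by combining with \eqref{e:case-ac}. The only cosmetic difference is that the paper applies $\limsup_{s\downarrow 0}$ directly to the quasi-monotonicity inequality rather than extracting a sequence $r_n \downarrow 0$ as you do.
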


\begin{proof}
By \cite[Corollary 1.10]{Mur}, there exists $C_{1}\geq 1$ such that
\begin{equation} \label{e:ge1}
C_{1}^{-1} \frac{r^{2}}{s^{2}} \leq \frac{\Psi(r)}{\Psi(s)} \qquad
	\textrm{for all $0<s \leq r < \diam(X,d)$.}
\end{equation}
The desired upper bound on $\Psi(r)$ follows immediately from \eqref{e:ge1}.
The lower bound on $\Psi(r)$ for $r \in (0,\diam(X,d))$ follows by letting
$s \downarrow 0$ in \eqref{e:ge1} and using \eqref{e:case-ac} to obtain
\begin{equation*}
\frac{\Psi(r)}{r^{2}} \geq C_{1}^{-1}\limsup_{s \downarrow 0} \frac{\Psi(s)}{s^{2}}>0,
\end{equation*}
completing the proof.
\end{proof}

The upper inequality in \eqref{e:bi-Lipschitz-d-dint} is obtained from
\hyperlink{vd}{$\on{VD}$}, \hyperlink{pi}{$\operatorname{PI}(\Psi)$} and \eqref{e:ge2}, as follows.

\begin{lemma} \label{l:pi-dist}
Let $(X,d,m,\mathcal{E},\mathcal{F})$ be a MMD space satisfying \hyperlink{vd}{$\on{VD}$}
and \hyperlink{pi}{$\operatorname{PI}(\Psi)$}, and assume further that $\Psi$ satisfies
\eqref{e:ge2}. Then there exist $C,r_{0}>0$ such that 
$d_{\on{int}}(x,y) \le C d(x,y)$ for all $x,y\in X$ with $d(x,y) < r_{0}$.
\end{lemma}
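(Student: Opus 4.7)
The plan is to show that any $f \in \mathcal{F}_{\on{loc}} \cap \mathcal{C}(X)$ with $\Gamma(f,f) \leq m$ is locally Lipschitz below some fixed scale $r_{0}$, with Lipschitz constant independent of $f$; the desired bound $d_{\on{int}}(x,y) \leq C d(x,y)$ will then follow at once by taking the supremum in the definition \eqref{e:dint}.

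The first step is to combine \hyperlink{pi}{$\operatorname{PI}(\Psi)$} with the upper bound $\Psi(r) \leq C_{1}r^{2}$ from \eqref{e:ge2}. Choose $r_{0} \in (0, r_{1}/(2A))$ where $A$ is the dilation constant in \hyperlink{pi}{$\operatorname{PI}(\Psi)$} and $r_{1}$ comes from \eqref{e:ge2}. Given $f \in \mathcal{F}_{\on{loc}} \cap \mathcal{C}(X)$ with $\Gamma(f,f) \leq m$, and any ball $B(z,r)$ with $r < r_{0}$, pick $f^{\#}\in\mathcal{F}$ agreeing with $f$ on a relatively compact neighbourhood of $\overline{B(z,Ar)}$; then $\Gamma(f^{\#},f^{\#})(B(z,Ar)) \leq m(B(z,Ar))$ by the local determination of energy measures (see \eqref{e:Floc}). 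Applying \hyperlink{pi}{$\operatorname{PI}(\Psi)$} to $f^{\#}$ and then \hyperlink{vd}{$\on{VD}$} yields the Gaussian mean-oscillation estimate
\begin{equation*}
\frac{1}{m(B(z,r))} \int_{B(z,r)} |f - f_{B(z,r)}|^{2}\,dm \leq C_{2} r^{2}
\end{equation*}
for all $z\in X$ and $r\in(0,r_{0})$, with $C_{2}$ independent of $f$.

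The second step is a standard dyadic telescoping argument: for $r_{k}:=2^{-k}r$, Cauchy--Schwarz combined with \hyperlink{vd}{$\on{VD}$} gives
\begin{equation*}
|f_{B(z,r_{k+1})} - f_{B(z,r_{k})}|^{2} \leq \frac{1}{m(B(z,r_{k+1}))}\int_{B(z,r_{k})} |f - f_{B(z,r_{k})}|^{2}\,dm \leq C_{3} r_{k}^{2},
\end{equation*}
and summing the resulting geometric series yields $|f_{B(z,r_{n})} - f_{B(z,r)}| \leq C_{4} r$ for all $n\in\mathbb{N}$. Since $f$ is continuous we have $f_{B(z,r_{n})} \to f(z)$ as $n\to\infty$, so $|f(z) - f_{B(z,r)}| \leq C_{4} r$ for every $z\in X$ and $r\in(0,r_{0})$.

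Finally, for $x,y\in X$ with $\rho := d(x,y) < r_{0}/4$, both $B(x,2\rho)$ and $B(y,2\rho)$ are contained in $B(x,3\rho)$, so Cauchy--Schwarz together with \hyperlink{vd}{$\on{VD}$} and Step~1 yields
\begin{equation*}
|f_{B(x,2\rho)} - f_{B(x,3\rho)}| \leq C_{5}\rho, \qquad |f_{B(y,2\rho)} - f_{B(x,3\rho)}| \leq C_{5}\rho.
\end{equation*}
Combining these with the Step~2 bounds $|f(x) - f_{B(x,2\rho)}|, |f(y) - f_{B(y,2\rho)}| \leq 2C_{4}\rho$ through the triangle inequality gives $|f(x) - f(y)| \leq C\rho = C d(x,y)$; taking the supremum over admissible $f$ completes the proof with this $C$ and this $r_{0}$. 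The main technical obstacle is purely bookkeeping: one must verify that local representatives $f^{\#}$ can be chosen so that the Poincaré constant in Step~1 is genuinely uniform in $f$, which is transparent from \eqref{e:Floc} and the fact that the energy measure is locally determined.
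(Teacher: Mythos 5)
Your proof is correct and reaches the same conclusion by essentially the same route as the paper, namely deducing a local Lipschitz bound for any $f \in \mathcal{F}_{\on{loc}} \cap \mathcal{C}(X)$ with $\Gamma(f,f) \le m$ from the Poincar\'e inequality and the Gaussian upper scaling $\Psi(r) \lesssim r^2$, then taking the supremum over such $f$ in \eqref{e:dint}. The difference is one of self-containment: the paper simply cites \cite[Lemma 2.4]{Mur} (equivalently \cite[Lemma 5.15]{HK98}) for the oscillation estimate $|f(x)-f(y)| \le C\sqrt{\Psi(r)}$ when $d(x,y) \le C^{-1}r$ and then substitutes \eqref{e:ge2}, whereas you reprove that cited lemma from scratch via the standard argument --- the $L^2$-mean-oscillation bound on small balls from \hyperlink{pi}{$\operatorname{PI}(\Psi)$} and \hyperlink{vd}{$\on{VD}$}, a dyadic telescoping of ball averages, convergence of $f_{B(z,r_n)}$ to $f(z)$ by continuity, and a final comparison of the averages at $x$ and at $y$ through a common larger ball. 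Each step is sound: the reduction from $f\in\mathcal{F}_{\on{loc}}$ to a representative $f^\#\in\mathcal{F}$ on a relatively compact set is exactly how $\Gamma(f,f)$ is locally defined in Definition \ref{d:dint}, the geometric series sums correctly, and the final triangle inequality closes the argument. The only cosmetic point is a mild renaming of $r_0$ at the end (the bound is obtained for $d(x,y) < r_0/4$), but that is harmless. In short, you have filled in the black box the paper outsources to \cite{Mur}; the two proofs are otherwise the same.
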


\begin{proof}
Let $f \in \mathcal{F}_{\loc} \cap \mathcal{C}(X)$ satisfy $\Gamma(f,f) \leq m$. Then
by \cite[Lemma 2.4]{Mur} (see also \cite[Lemma 5.15]{HK98}), there exists $C>0$ such that
\begin{equation} \label{e:pi-dist}
|f(x)-f(y)| \leq C \sqrt{\Psi(r)} \qquad \textrm{for all $x,y \in X$ and $r>0$ with $d(x,y) \leq C^{-1}r$.}
\end{equation}
The desired estimate follows from \eqref{e:pi-dist}, \eqref{e:ge2} and \eqref{e:dint}.
\end{proof}

On the other hand, the lower inequality in \eqref{e:bi-Lipschitz-d-dint} follows from
\hyperlink{vd}{$\on{VD}$}, \hyperlink{cs}{$\operatorname{CS}(\Psi)$} and \eqref{e:ge2}
as stated in the following lemma, which also establishes standard properties of the
functions $(1-r^{-1}d(x,\cdot))^{+}$ in studying Gaussian heat kernel estimates
as a key step of the proof of the ``mutual absolute continuity''
between the symmetric measure $m$ and the energy measures.

\begin{lemma} \label{l:dist}
Let $(X,d,m,\mathcal{E},\mathcal{F})$ be a MMD space satisfying \hyperlink{vd}{$\on{VD}$}
and \hyperlink{cs}{$\operatorname{CS}(\Psi)$}, and assume further that $\Psi$ satisfies
\eqref{e:ge2}. Then there exist $C,r_{0}>0$ such that for all
$(x,r) \in X\times(0,r_{0})$, the function $f_{x,r}:= (1- r^{-1}d(x,\cdot))^{+}$
satisfies $f_{x,r} \in \mathcal{F}$ and $\Gamma(f_{x,r},f_{x,r}) \leq C^{2} r^{-2} m$.
In particular, $d_{\on{int}}(x,y) \geq C^{-1} d(x,y)$ for all $x,y \in X$ with $d(x,y)\wedge(Cd_{\on{int}}(x,y))<r_{0}$.
\end{lemma}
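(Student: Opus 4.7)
My plan has three stages: first, constructing $f_{x,r}$ as an element of $\mathcal{F}$ via a ``staircase'' approximation built from \hyperlink{cs}{$\on{CS}(\Psi)$}-cutoffs; second, promoting the integrated energy bound to the pointwise measure inequality $\Gamma(f_{x,r},f_{x,r}) \leq C^{2} r^{-2} m$ by a localized application of \hyperlink{cs}{$\on{CS}(\Psi)$}; and finally, deducing the lower bound on $d_{\on{int}}$ from \eqref{e:dint}. Throughout, I would fix $x \in X$ and $r \in (0, r_{0})$ small enough for \eqref{e:ge2} to apply, and for each $n \in \mathbb{N}$ let $R_{k} := kr/(n+1)$ and $A_{k} := B(x, R_{k+1}) \setminus B(x, R_{k})$ for $k = 0, 1, \ldots, n$.

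For stage one, a preliminary step is to obtain the cutoff energy estimate $\mathcal{E}(\varphi, \varphi) \leq C_{S} \Psi(\rho)^{-1} m(B(z, R+\rho) \setminus B(z, R))$ for a \hyperlink{cs}{$\on{CS}(\Psi)$}-cutoff $\varphi$ for $B(z, R) \subset B(z, R+\rho)$; the trick is to apply \hyperlink{cs}{$\on{CS}(\Psi)$} to $\varphi$ with test function an auxiliary cutoff $\tilde\varphi$ for $B(z, R+\rho) \subset B(z, R+2\rho)$, for then strong locality makes $\Gamma(\tilde\varphi, \tilde\varphi)$ vanish on $B(z, R+\rho)$ where $\tilde\varphi \equiv 1$, killing the cross-term. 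I would then choose $\varphi_{n,k}$ as such cutoffs for $B(x, R_{k}) \subset B(x, R_{k+1})$, set $\psi_{n} := (n+1)^{-1} \sum_{k=0}^{n} \varphi_{n,k}$, and check $|\psi_{n} - f_{x,r}| \leq (n+1)^{-1}$ pointwise, giving $L^{2}$-convergence $\psi_{n} \to f_{x,r}$. By strong locality, $\Gamma(\varphi_{n,j}, \varphi_{n,j})$ is supported in $\overline{A_{j}}$, so \eqref{e:Cauchy-Schwarz-energy} forces $\Gamma(\varphi_{n,j}, \varphi_{n,k}) = 0$ for $|j - k| \geq 2$; the arithmetic-geometric-mean inequality then bounds $\Gamma(\psi_{n}, \psi_{n}) \leq 3 (n+1)^{-2} \sum_{k} \Gamma(\varphi_{n,k}, \varphi_{n,k})$ as measures. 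Summing total masses through disjointness of the $A_{k}$ and using \eqref{e:ge2} to replace $\Psi(r/(n+1))^{-1}$ by $C_{1}(n+1)^{2}/r^{2}$ yields $\mathcal{E}(\psi_{n}, \psi_{n}) \leq 3 C_{S} C_{1} r^{-2} m(B(x, r))$ uniformly in $n$. Weak compactness in $(\mathcal{F}, \mathcal{E}_{1})$ plus $L^{2}$-convergence concludes $f_{x,r} \in \mathcal{F}$, and Mazur's lemma produces convex combinations $\tilde\psi_{n}$ of $\{\psi_{m}\}_{m \geq n}$ converging to $f_{x,r}$ in norm.

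The main obstacle is stage two. For fixed $y \in X$ and small $s > 0$, I would take a \hyperlink{cs}{$\on{CS}(\Psi)$}-cutoff $\eta$ for $B(y, s) \subset B(y, 2s)$ with $\mathcal{E}(\eta, \eta) \leq C_{S} \Psi(s)^{-1} m(B(y, 2s))$ (from the preliminary step), noting $\eta \geq \one_{B(y, s)}$. Applying \hyperlink{cs}{$\on{CS}(\Psi)$} to each $\varphi_{n,k}$ with test function $\eta$ and summing over $k$, disjointness of the $A_{k}$ gives $\sum_{k} \int_{A_{k}} \varphi_{n,k}^{2} \, d\Gamma(\eta, \eta) \leq \mathcal{E}(\eta, \eta)$ and $\sum_{k} \int_{A_{k}} \eta^{2} \, dm \leq m(B(y, 2s))$, producing $\sum_{k} \Gamma(\varphi_{n,k}, \varphi_{n,k})(B(y, s)) \leq \tfrac{1}{8} \mathcal{E}(\eta, \eta) + C_{S} \Psi(r/(n+1))^{-1} m(B(y, 2s))$. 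Combining with the cutoff energy bound on $\eta$, \eqref{e:ge2}, \hyperlink{vd}{$\on{VD}$} and the decomposition of $\Gamma(\psi_{n}, \psi_{n})$ from stage one, I obtain $\Gamma(\psi_{n}, \psi_{n})(B(y, s)) \leq 3 C_{S} C_{1} C_{D} m(B(y, s)) \bigl( (8 s^{2}(n+1)^{2})^{-1} + r^{-2} \bigr)$. Letting $n \to \infty$ with $s$ fixed, and using strong convergence $\tilde\psi_{n} \to f_{x,r}$ together with the setwise continuity of energy measures from \eqref{e:EnergyMeasConv}, the first bracketed term vanishes, yielding $\Gamma(f_{x,r}, f_{x,r})(B(y, s)) \leq 3 C_{S} C_{1} C_{D} r^{-2} m(B(y, s))$ for every $y \in X$ and small $s$. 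A Vitali-type covering argument under \hyperlink{vd}{$\on{VD}$} then extends this ball bound to $\Gamma(f_{x,r}, f_{x,r}) \leq C^{2} r^{-2} m$ as measures, with $C^{2} = 3 C_{S} C_{1} C_{D}$.

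Finally, the measure inequality makes $g := C^{-1} r f_{x,r}$ a continuous, compactly supported element of $\mathcal{F} \subset \mathcal{F}_{\loc}$ with $\Gamma(g, g) \leq m$, so \eqref{e:dint} gives $d_{\on{int}}(x, y) \geq g(x) - g(y) = C^{-1}(r \wedge d(x, y))$ for each $r \in (0, r_{0})$. When $d(x, y) < r_{0}$, taking $r \in (d(x, y), r_{0})$ yields $d_{\on{int}}(x, y) \geq C^{-1} d(x, y)$; when instead $d(x, y) \geq r_{0}$ but $C d_{\on{int}}(x, y) < r_{0}$, letting $r \uparrow r_{0}$ would give $d_{\on{int}}(x, y) \geq C^{-1} r_{0}$, contradicting $C d_{\on{int}}(x, y) < r_{0}$, so that case is vacuous.
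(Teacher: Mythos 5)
Your argument is correct, but your stage two is a genuine detour from the paper's route. The paper carries an arbitrary test function $g\in\mathcal{F}$ all the way through the \hyperlink{cs}{$\on{CS}(\Psi)$} computation, arriving at the bound \eqref{e:si3} uniformly in $n$; after Ces\`aro convergence and the triangle inequality \eqref{e:triangle-energy} this yields $\int_X g^2\,d\Gamma(f_{x,r},f_{x,r})\leq 4C_1C_S r^{-2}\int_{B(x,r)}g^2\,dm$ for all $g\in\mathcal{F}\cap\mathcal{C}_{\mathrm{c}}(X)$, and the density of $\mathcal{F}\cap\mathcal{C}_{\mathrm{c}}(X)$ in $(\mathcal{C}_{\mathrm{c}}(X),\|\cdot\|_{\sup})$ gives the measure inequality immediately; specialising $g\equiv 1$ on $B(x,r)$ also provides the $\mathcal{E}_1$-bound needed for $f_{x,r}\in\mathcal{F}$, so no separate cutoff-energy estimate is required. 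You instead freeze $g$ to a cutoff $\eta$ adapted to a ball $B(y,s)$, derive a ball-by-ball bound $\Gamma(f_{x,r},f_{x,r})(B(y,s))\lesssim r^{-2}m(B(y,s))$, and then must upgrade this via a covering and Lebesgue-differentiation argument under \hyperlink{vd}{$\on{VD}$} to obtain a genuine measure inequality --- sound, but precisely the extra machinery that leaving $g$ free renders unnecessary (and note that a bare Vitali covering alone inflates the constant by a bounded-overlap factor; your stated constant $3C_1C_SC_D$ needs the density/differentiation step, not just a covering). Two minor points: the adjacent cross terms $\Gamma(\varphi_{n,j},\varphi_{n,j+1})$ also vanish, since $\varphi_{n,j+1}\equiv 1$ on an open neighbourhood of $\overline{B(x,R_{j+1})}\supset\supp_{m}[\varphi_{n,j}]$ and strong locality together with \eqref{e:Cauchy-Schwarz-energy} then force $\Gamma(\varphi_{n,j},\varphi_{n,j+1})=0$; this gives the exact identity \eqref{e:si2} and makes the AM--GM factor $3$ superfluous (though harmless). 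Also, your index should start at $k=1$ rather than $k=0$, since \hyperlink{cs}{$\on{CS}(\Psi)$} requires a strictly positive inner radius.
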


\begin{proof}
Let $r_{1}>0$ and $C_{1}\geq 1$ be as in \eqref{e:ge2}, $(x,r) \in X\times(0,r_{1})$
and $n\in\mathbb{N}\setminus\{1\}$. For each $i\in\{1,\ldots,n-1\}$, let
$\varphi_{i,n}\in\mathcal{F}$ be a cutoff function for $B(x, ir/n) \subset B(x,(i+1)r/n)$
as given in \hyperlink{cs}{$\on{CS}(\Psi)$} and set $U_{i,n}:= B(x,(i+1)r/n) \setminus B(x,ir/n)$,
so that by \hyperlink{cs}{$\on{CS}(\Psi)$} we have
\begin{equation} \label{e:si1}
\int_{X} g^{2}\, d\Gamma(\varphi_{i,n},\varphi_{i,n})
	\leq \frac{1}{8} \int_{U_{i,n}} d\Gamma(g,g) + \frac{C_{S}}{\Psi(r/n)} \int_{U_{i,n}} g^{2}\,dm
\end{equation}
for all $g \in \mathcal{F}$. Set
\begin{equation*}
\varphi_{n} := \frac{1}{n-1}\sum_{i=1}^{n-1} \varphi_{i,n},
\end{equation*}
so that $0 \leq \varphi_{n} \leq 1$ $m$-a.e., $\supp_{m}[\varphi_{n}] \subset B(x,r)$ and
\begin{equation} \label{e:si4}
|\varphi_{n}-f_{x,r}| \leq 2n^{-1}\one_{B(x,r)} \quad \textrm{$m$-a.e.}
\end{equation}
By the strong locality of $(\mathcal{E},\mathcal{F})$, \cite[Corollary 3.2.1]{FOT}
(or \cite[Theorem 4.3.8]{CF}) and \eqref{e:Cauchy-Schwarz-energy}, we have 
\begin{equation} \label{e:si2}
\Gamma(\varphi_{n},\varphi_{n}) = (n-1)^{-2}\sum_{i=1}^{n-1} \Gamma(\varphi_{i,n},\varphi_{i,n}).
\end{equation}
Combining \eqref{e:si1}, \eqref{e:si2} and \eqref{e:ge2}, we obtain
\begin{equation} \label{e:si3}
\begin{split}
\int_{X} g^{2}\, d\Gamma(\varphi_{n},\varphi_{n}) &\leq \frac{(n-1)^{-2}}{8} \int_{B(x,r)} d\Gamma(g,g)
	+ \frac{C_{S}(n-1)^{-2}}{\Psi(r/n)} \int_{B(x,r)} g^{2}\,dm \\
&\leq \frac{(n-1)^{-2}}{8} \int_{B(x,r)} d\Gamma(g,g) + \frac{4C_{1}C_{S}}{r^{2}} \int_{B(x,r)} g^{2}\,dm
\end{split}
\end{equation}
for all $g \in \mathcal{F}$. Therefore choosing $g\in\mathcal{F}\cap\mathcal{C}_{\mathrm{c}}(X)$
with $g=1$ on $B(x,r)$, which exists by the regularity of
$(\mathcal{E},\mathcal{F})$ and \cite[Exercise 1.4.1]{FOT}, and noting that
$\Gamma(\varphi_{n},\varphi_{n})(X\setminus B(x,r))=\Gamma(g,g)(B(x,r))=0$
by $\supp_{m}[\varphi_{n}] \subset B(x,r)$, the strong locality of $(\mathcal{E},\mathcal{F})$
and \cite[Corollary 3.2.1]{FOT} (or \cite[Theorem 4.3.8]{CF}), we see from \eqref{e:si3} that
\begin{equation*}
\mathcal{E}_{1}(\varphi_{n},\varphi_{n})
	\leq \Bigl( \frac{4 C_{1} C_{S}}{r^{2}} + 1 \Bigr) m(B(x,r))
	\qquad \textrm{for all $n \in \mathbb{N}\setminus\{1\}$.}
\end{equation*}
Hence by the Banach--Saks theorem \cite[Theorem A.4.1-(i)]{CF} there exists a
subsequence $\{\varphi_{n_{k}}\}_{k=1}^{\infty}$ of $\{\varphi_{n}\}_{n=2}^{\infty}$
such that its Ces\`{a}ro mean sequence
\begin{equation*}
\psi_{i} := \frac{1}{i} \sum_{k=1}^{i} \varphi_{n_{k}}, \quad i\in\mathbb{N},
\end{equation*}
converges in norm in $(\mathcal{F},\mathcal{E}_{1})$ as $i \to \infty$, but then its
limit must be $f_{x,r}$ by \eqref{e:si4} and in particular $f_{x,r} \in \mathcal{F}$.
On the other hand, by \eqref{e:bilinear-energy} and the Cauchy--Schwarz
inequality similar to \eqref{e:Cauchy-Schwarz-energy}, we have the triangle inequality
\begin{equation} \label{e:triangle-energy}
\biggl| \Bigl( \int_{X} g^{2}\,d\Gamma(f_{1},f_{1}) \Bigr)^{1/2} - \Bigl( \int_{X} g^{2}\,d\Gamma(f_{2},f_{2}) \Bigr)^{1/2} \biggr|
	\leq \Bigl( \int_{X} g^{2} \, d\Gamma(f_{1}-f_{2},f_{1}-f_{2}) \Bigr)^{1/2}
\end{equation}
for all $f_{1},f_{2} \in \mathcal{F}$ and all bounded Borel measurable function
$g:X\to\mathbb{R}$. Combining \eqref{e:triangle-energy} and \eqref{e:EnergyMeasTotalMass} 
with $\lim_{i \to \infty} \mathcal{E}_{1}( f_{x,r} - \psi_{i},  f_{x,r} - \psi_{i})=0$
in the same way as \eqref{e:EnergyMeasConv}, we obtain
\begin{align} \label{e:si5}
&\int_{X} g^{2} \,d\Gamma(f_{x,r},f_{x,r})
	=\lim_{i \to \infty}\int_{X} g^{2} \,d\Gamma(\psi_{i},\psi_{i}) \\
&\leq \liminf_{i \to \infty} \frac{1}{i} \sum_{k=1}^{i} \int_{X} g^{2} \, d\Gamma(\varphi_{n_k},\varphi_{n_k}) \quad \textrm{(by \eqref{e:triangle-energy} and the Cauchy--Schwarz inequality)} \nonumber\\
&\leq \lim_{i \to \infty} \frac{1}{i} \sum_{k=1}^{i} \biggl( \frac{(n_{k}-1)^{-2}}{8} \int_{B(x,r)} d\Gamma(g,g) + \frac{4 C_{1} C_{S}}{r^{2}} \int_{B(x,r)} g^{2} \,dm \biggr) \qquad \textrm{(by \eqref{e:si3})} \nonumber\\
&= \frac{4 C_{1} C_{S}}{r^{2}} \int_{B(x,r)} g^{2} \,dm \qquad \textrm{for all $g \in \mathcal{F} \cap \mathcal{C}_{\mathrm{c}}(X)$.} \nonumber
\end{align}
Since $\mathcal{F} \cap \mathcal{C}_{\mathrm{c}}(X)$ is dense in $(\mathcal{C}_{\mathrm{c}}(X),\|\cdot\|_{\sup})$
by the regularity of $(\mathcal{E},\mathcal{F})$, it follows from \eqref{e:si5} that
\begin{equation} \label{e:distance-energy}
\Gamma(f_{x,r},f_{x,r}) \leq 4 C_{1} C_{S} r^{-2} m.
\end{equation}

In particular, for all $(x,r)\in X\times(0,r_{1})$, the function
\begin{equation*}
\hat{f}_{x,r} := r (4C_{1} C_{S})^{-1/2} f_{x,r}
\end{equation*}
satisfies $\hat{f}_{x,r} \in \mathcal{F}\cap\mathcal{C}(X)$ and
$\Gamma( \hat{f}_{x,r}, \hat{f}_{x,r}) \leq m$ by \eqref{e:distance-energy},
and we therefore obtain
\begin{equation} \label{e:compdist1}
d_{\on{int}}(x,y) \geq \hat{f}_{x,r}(x) - \hat{f}_{x,r}(y) = (4C_{1} C_{S})^{-1/2} r
	\qquad \textrm{for all $y\in X$ with $d(x,y) \geq r$}
\end{equation}
in view of \eqref{e:dint}. Thus for each $x,y\in X$, if $d(x,y)\geq r_{1}$ then
$(4 C_{1} C_{S})^{1/2} d_{\on{int}}(x,y) \geq r_{1}$ by \eqref{e:compdist1},
hence if $(4 C_{1} C_{S})^{1/2} d_{\on{int}}(x,y) < r_{1}$ then $d(x,y) < r_{1}$, and
if in turn $d(x,y) < r_{1}$ then $d_{\on{int}}(x,y) \geq (4C_{1} C_{S})^{-1/2} d(x,y)$
either by using \eqref{e:compdist1} with $r=d(x,y)\in(0,r_{1})$ or by $d(x,y)=0$,
completing the proof.
\end{proof}

We also need the following lemma for the proof of the absolute continuity of the
energy measures achieved as Proposition \ref{p:dom} below. Recall the notion of
an $\varepsilon$-net in a metric space $(X,d)$ introduced in Definition \ref{d:net}.

\begin{lemma}[Lipschitz partition of unity] \label{l:lip}
Let $(X,d,m,\mathcal{E},\mathcal{F})$ be a MMD space satisfying \hyperlink{vd}{$\on{VD}$}
and \hyperlink{cs}{$\operatorname{CS}(\Psi)$}, and assume further that $\Psi$ satisfies
\eqref{e:ge2}. Then there exist $C,r_{0}>0$ such that for any $\varepsilon \in (0,r_{0})$
and any $\varepsilon$-net $N\subset X$ in $(X,d)$ there exists
$\{\varphi_{z}\}_{z \in N} \subset \mathcal{F} \cap \mathcal{C}_{\mathrm{c}}(X)$
with the following properties:
\begin{itemize}[label=\textup{(b)},align=left,leftmargin=*]
\item[\textup{(a)}]$\sum_{z \in N} \varphi_{z}(x) = 1$ for all $x\in X$.
\item[\textup{(b)}]$0 \leq \varphi_{z}(x) \leq \one_{B(z,2\varepsilon)}(x)$ for all $x\in X$ and all $z \in N$.
\item[\textup{(c)}]$\varphi_{z}$ is $C\varepsilon^{-1}$-Lipschitz for all $z \in N$, i.e.,
	$|\varphi_{z}(x)-\varphi_{z}(y)|\leq C\varepsilon^{-1}d(x,y)$ for all $x,y\in X$.
\item[\textup{(d)}]$\Gamma(\varphi_{z},\varphi_{z}) \leq C \varepsilon^{-2} m$ for all $z \in N$.
\item[\textup{(e)}]$\mathcal{E}(\varphi_{z},\varphi_{z}) \leq C \varepsilon^{-2} m(B(z,\varepsilon))$ for all $z \in N$.
\end{itemize}
\end{lemma}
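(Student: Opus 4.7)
The plan is to build $\{\varphi_z\}_{z\in N}$ by normalising the ``tent functions'' supplied by Lemma \ref{l:dist} at scale $2\varepsilon$. Specifically, for $\varepsilon\in(0,r_0/2)$ (where $r_0$ is from Lemma \ref{l:dist}) set
\[
\tilde{\varphi}_z(x):=f_{z,2\varepsilon}(x)=\bigl(1-(2\varepsilon)^{-1}d(x,z)\bigr)^{+},\qquad z\in N,
\]
which by Lemma \ref{l:dist} lies in $\mathcal{F}\cap\mathcal{C}_{\mathrm{c}}(X)$, is supported in $B(z,2\varepsilon)$, is $(2\varepsilon)^{-1}$-Lipschitz, takes values in $[0,1]$, and satisfies $\Gamma(\tilde{\varphi}_z,\tilde{\varphi}_z)\le C'\varepsilon^{-2}m$. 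A standard packing argument using \hyperlink{vd}{$\on{VD}$} (the disjoint balls $\{B(w,\varepsilon/2)\}_{w\in N}$ all sit inside $B(x,K\varepsilon+\varepsilon/2)$ when they meet $B(x,K\varepsilon)$) yields, for each $K\ge 1$, a bound $\#\{w\in N\mid d(x,w)<K\varepsilon\}\le C''(K)$ depending only on $C_D$. In particular $S:=\sum_{w\in N}\tilde{\varphi}_w$ is a locally finite sum bounded above by $C''(2)$, and by maximality of $N$ every $x\in X$ admits some $w\in N$ with $d(x,w)<\varepsilon$, giving $\tilde{\varphi}_w(x)\ge 1/2$ and hence $S\ge 1/2$ everywhere.

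I then define $\varphi_z:=\tilde{\varphi}_z/S$. Properties (a) and (b) are immediate from $\sum_z\tilde{\varphi}_z=S$, $\tilde{\varphi}_z\ge 0$, and $\supp\tilde{\varphi}_z\subset B(z,2\varepsilon)$. Property (c) follows by writing the quotient difference as
\[
\varphi_z(x)-\varphi_z(y)=\frac{\tilde{\varphi}_z(x)-\tilde{\varphi}_z(y)}{S(x)}+\frac{\tilde{\varphi}_z(y)\bigl(S(y)-S(x)\bigr)}{S(x)S(y)},
\]
using $S\ge 1/2$, $|\tilde{\varphi}_z|\le 1$, the Lipschitz bound on each $\tilde{\varphi}_w$, and the fact that $|S(x)-S(y)|\le C''(2)(2\varepsilon)^{-1}d(x,y)$ because only $C''(2)$ summands contribute to $S$ near $x$ and $y$.

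The key step is membership $\varphi_z\in\mathcal{F}$ together with the energy bound (d). Here I localise: on $\supp\tilde{\varphi}_z\subset B(z,2\varepsilon)$ only the finite set $N_z^{+}:=\{w\in N\mid d(w,z)<4\varepsilon\}$, of cardinality at most $C''(4)$, contributes to $S$, so $S=S_z:=\sum_{w\in N_z^{+}}\tilde{\varphi}_w$ on that set, while $\tilde{\varphi}_z=0$ elsewhere. Consequently $\varphi_z=\Phi\bigl((\tilde{\varphi}_w)_{w\in N_z^{+}}\bigr)$ for the finite-variable map $\Phi((a_w)_{w\in N_z^{+}}):=a_z/\bigl((\sum_w a_w)\vee\tfrac{1}{2}\bigr)$, which is Lipschitz on $\mathbb{R}^{|N_z^{+}|}$ with $\Phi(0)=0$ and has partial derivatives bounded by an absolute constant on $[0,1]^{|N_z^{+}|}$. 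Applying the chain rule for strongly local regular Dirichlet forms (cf.\ \cite[Theorem 3.2.2]{FOT} combined with the standard extension to multivariable Lipschitz compositions) then yields $\varphi_z\in\mathcal{F}\cap L^{\infty}(X,m)$ together with a pointwise bound of the form $d\Gamma(\varphi_z,\varphi_z)\lesssim\sum_{w,w'\in N_z^{+}}|d\Gamma(\tilde{\varphi}_w,\tilde{\varphi}_{w'})|$; estimating cross terms by \eqref{e:Cauchy-Schwarz-energy} and using $|N_z^{+}|\le C''(4)$ together with $\Gamma(\tilde{\varphi}_w,\tilde{\varphi}_w)\le C'\varepsilon^{-2}m$ gives (d). Property (e) then follows because strong locality forces $\Gamma(\varphi_z,\varphi_z)$ to be supported in $\overline{B(z,2\varepsilon)}$, whence $\mathcal{E}(\varphi_z,\varphi_z)\le C\varepsilon^{-2}m(B(z,2\varepsilon))\lesssim C\varepsilon^{-2}m(B(z,\varepsilon))$ by \hyperlink{vd}{$\on{VD}$}.

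The main obstacle is the chain-rule step: one must justify the multivariable composition with the non-smooth Lipschitz function $\Phi$ and obtain a usable pointwise comparison of the resulting energy measure with those of the $\tilde{\varphi}_w$. This can be done either by truncating and mollifying $\Phi$ and passing to a limit using \eqref{e:triangle-energy} and a Banach--Saks argument as in the proof of Lemma \ref{l:dist}, or by directly invoking known multivariable chain-rule statements for strongly local regular Dirichlet forms; all other steps are packing arguments and elementary Lipschitz estimates.
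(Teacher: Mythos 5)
Your construction and argument coincide with the paper's: the same tent functions $f_{z,2\varepsilon}$ from Lemma \ref{l:dist}, the same normalisation $\varphi_z = f_{z,2\varepsilon}/\sum_{w}f_{w,2\varepsilon}$, the same VD packing bound giving $1/2 \le S \lesssim 1$ with bounded overlap, the same two-case quotient estimate for (c), the same appeal to the chain rule and Cauchy--Schwarz for (d), and the same use of strong locality and VD for (e). The ``main obstacle'' you flag at the end is actually not one here: since the denominator is bounded below by $1/2$ and the vector $(f_{w,2\varepsilon}(x))_{w\in N_z^+}$ takes values in a compact set on which $\sum_w a_w \ge 1/2$, the quotient $a_z/\sum_w a_w$ admits a $C^1$ extension with bounded derivatives to all of $\mathbb{R}^{|N_z^+|}$ agreeing on a neighbourhood of the range, so the standard multivariable chain rule \cite[Theorem 3.2.2]{FOT} (and, for membership in $\mathcal{F}\cap\mathcal{C}_{\mathrm{c}}(X)$, \cite[Exercise I.4.16]{MR}) applies directly; no mollification or Banach--Saks limiting argument is needed. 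One small inaccuracy: your $\Phi(a)=a_z/\bigl((\sum_w a_w)\vee\tfrac12\bigr)$ is not globally Lipschitz on $\mathbb{R}^{|N_z^+|}$ (the numerator is unbounded), but this is harmless because only its restriction to a neighbourhood of the compact range matters.
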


\begin{proof}
Let $r_{0}>0$ be the constant from Lemma \ref{l:dist} and let
$f_{x,r}\in\mathcal{F}\cap\mathcal{C}(X)$ be as defined in Lemma \ref{l:dist}
for each $(x,r)\in X\times(0,r_{0})$. Let $\varepsilon \in (0,r_{0}/2)$ and
let $N\subset X$ be an $\varepsilon$-net in $(X,d)$. Noting that
\begin{equation} \label{e:lp1}
\frac{1}{2} \leq \sum_{w \in N} f_{w,2\varepsilon}(y)
	= \sum_{w \in N\cap B(z,4\varepsilon)} f_{w,2\varepsilon}(y)
	\leq \#(N\cap B(z,4\varepsilon)) \lesssim 1
\end{equation}
for all $z\in X$ and all $y \in B(z,2\varepsilon)$
by $\bigcup_{w \in N} B(w,\varepsilon)=X$ and \hyperlink{vd}{$\on{VD}$}, we define
\begin{equation} \label{e:lip-dfn}
\varphi_{z} := \frac{f_{z,2\varepsilon}}{\sum_{w \in N} f_{w,2\varepsilon}}
	=\frac{f_{z,2\varepsilon}}{\sum_{w \in N\cap B(z,4\varepsilon)} f_{w,2\varepsilon}}
	\qquad\textrm{for each $z\in N$,}
\end{equation}
so that properties (a) and (b) obviously hold and
$\{\varphi_{z}\}_{z \in N} \subset \mathcal{F}\cap\mathcal{C}_{\mathrm{c}}(X)$
by \cite[Exercise I.4.16 (or Corollary I.4.13)]{MR} and the relative compactness
of $B(z,2\varepsilon)$ in $X$. The estimate (d) follows easily from the chain rule
\cite[Theorem 3.2.2]{FOT} for $\Gamma$, the Cauchy--Schwarz inequality similar to
\eqref{e:Cauchy-Schwarz-energy}, \eqref{e:lp1} and Lemma \ref{l:dist}, and
the estimate (e) is an immediate consequence of \eqref{e:EnergyMeasTotalMass}, (b),
\cite[Corollary 3.2.1]{FOT} (or \cite[Theorem 4.3.8]{CF}), (d) and \hyperlink{vd}{$\on{VD}$}.

It remains to prove (c). First, note that by the triangle inequality,
$f_{z,2\varepsilon}$ is $(2\varepsilon)^{-1}$-Lipschitz for all $z \in X$, i.e.,
\begin{equation} \label{e:lp2}
|f_{z,2\varepsilon}(x)-f_{z,2\varepsilon}(y)| \leq (2\varepsilon)^{-1} d(x,y)
	\qquad \textrm{for all $x,y,z \in X$.}
\end{equation}
Let $z\in N$ and $x,y\in X$. If $d(x,y) \geq \varepsilon$, then 
\begin{equation} \label{e:lp3}
|\varphi_{z}(x)-\varphi_{z}(y)| \leq 1 \leq \varepsilon^{-1} d(x,y).
\end{equation}
On the other hand, if $d(x,y) < \varepsilon$, then
\begin{align} \label{e:lp4}
&|\varphi_{z}(x)-\varphi_{z}(y)| \\
&\leq \biggl|\frac{f_{z,2\varepsilon}(x)}{\sum_{w \in N} f_{w,2\varepsilon}(x)}-\frac{f_{z,2\varepsilon}(y)}{\sum_{w \in N} f_{w,2\varepsilon}(x)}\biggr|
	+ \biggl|\frac{f_{z,2\varepsilon}(y)}{\sum_{w \in N} f_{w,2\varepsilon}(x)}-\frac{f_{z,2\varepsilon}(y)}{\sum_{w \in N} f_{w,2\varepsilon}(y)}\biggr| \nonumber\\
&\leq \varepsilon^{-1} d(x,y) + \biggl|\frac{1}{\sum_{w \in N} f_{w,2\varepsilon}(x)}-\frac{1}{\sum_{w \in N} f_{w,2\varepsilon}(y)}\biggr| \qquad \textrm{(by \eqref{e:lp1} and \eqref{e:lp2})} \nonumber\\
&\leq \varepsilon^{-1} d(x,y) + 4 \Biggl|\sum_{w \in N \cap B(x,4\varepsilon)}\bigl(f_{w,2\varepsilon}(y) - f_{w,2\varepsilon}(x)\bigr) \Biggr| \quad \textrm{(by \eqref{e:lp1} and $d(x,y) < \varepsilon$)} \nonumber\\
& \lesssim \varepsilon^{-1} d(x,y) \qquad \textrm{(by \eqref{e:lp2} and \eqref{e:lp1}).} \nonumber
\end{align}
Combining \eqref{e:lp3} and \eqref{e:lp4}, we obtain (c).
\end{proof}

\begin{proposition}[Energy dominance of $m$] \label{p:dom}
Let $(X,d,m,\mathcal{E},\mathcal{F})$ be a MMD space satisfying \hyperlink{vd}{$\on{VD}$},
\hyperlink{pi}{$\on{PI}(\Psi)$} and \hyperlink{cs}{$\on{CS}(\Psi)$}, and assume further
that $\Psi$ satisfies \eqref{e:case-ac}. Then $m$ is an energy-dominant measure of
$(\mathcal{E},\mathcal{F})$, that is, $\Gamma(f,f) \ll m$ for all $f \in \mathcal{F}$.
\end{proposition}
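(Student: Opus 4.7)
The plan is to approximate every function in $\mathcal{F} \cap \mathcal{C}_{\mathrm{c}}(X)$ in $\mathcal{E}_{1}$-norm by finite linear combinations of the Lipschitz partition-of-unity functions $\{\varphi_{z}\}$ from Lemma \ref{l:lip}, whose energy measures are absolutely continuous with respect to $m$ by Lemma \ref{l:lip}-(d). Since $\mathcal{F} \cap \mathcal{C}_{\mathrm{c}}(X)$ is norm-dense in $(\mathcal{F},\mathcal{E}_{1})$ by the regularity of $(\mathcal{E},\mathcal{F})$, Lemma \ref{l:cont-ac-sing}-(a) reduces the problem to showing $\Gamma(f,f) \ll m$ only for $f \in \mathcal{F} \cap \mathcal{C}_{\mathrm{c}}(X)$.

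Fix such an $f$. For sufficiently small $\varepsilon>0$, pick an $\varepsilon$-net $N_{\varepsilon}$ in $(X,d)$, let $\{\varphi_{z}\}_{z \in N_{\varepsilon}}$ be the partition of unity from Lemma \ref{l:lip}, and define
\begin{equation*}
f_{\varepsilon} := \sum_{z \in N_{\varepsilon}} f_{B(z,\varepsilon)} \varphi_{z},
\qquad f_{B(z,\varepsilon)} := \frac{1}{m(B(z,\varepsilon))} \int_{B(z,\varepsilon)} f \, dm.
\end{equation*}
Because $\supp(f)$ is compact and $\supp(\varphi_{z}) \subset B(z,2\varepsilon)$, only finitely many terms are non-zero; thus $f_{\varepsilon} \in \mathcal{F}$, the support of $f_{\varepsilon}$ stays in a fixed compact set $K$ for all small $\varepsilon$, and $\Gamma(f_{\varepsilon},f_{\varepsilon}) \ll m$ by Lemma \ref{l:lin}-(a) and Lemma \ref{l:lip}-(d). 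Uniform continuity of $f$ on $K$ gives $f_{\varepsilon} \to f$ uniformly, and hence in $L^{2}(X,m)$, as $\varepsilon \downarrow 0$.

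The heart of the proof is the uniform upper bound $\mathcal{E}(f_{\varepsilon},f_{\varepsilon}) \lesssim \mathcal{E}(f,f)$. Cover $K$ by a family $\{B_{i} := B(x_{i},\varepsilon)\}$ of balls with bounded overlap (via \hyperlink{vd}{$\on{VD}$}). On a neighborhood of each $B_{i}$, only those $\varphi_{z}$ with $z \in N_{\varepsilon} \cap B(x_{i},3\varepsilon)$ contribute, and the finite sum $\psi_{i} := \sum_{z \in N_{\varepsilon} \cap B(x_{i},3\varepsilon)}\varphi_{z} \in \mathcal{F} \cap \mathcal{C}_{\mathrm{c}}(X)$ equals $1$ there. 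Applying strong locality of $(\mathcal{E},\mathcal{F})$ to $f_{\varepsilon} - f_{B(x_{i},\varepsilon)}\psi_{i}$ yields
\begin{equation*}
\Gamma(f_{\varepsilon},f_{\varepsilon})(B_{i})
= \Gamma\biggl( \sum_{z} \bigl(f_{B(z,\varepsilon)} - f_{B(x_{i},\varepsilon)}\bigr)\varphi_{z},\, \sum_{z} \bigl(f_{B(z,\varepsilon)} - f_{B(x_{i},\varepsilon)}\bigr)\varphi_{z} \biggr)(B_{i}),
\end{equation*}
where the sums now run over $z \in N_{\varepsilon} \cap B(x_{i},3\varepsilon)$. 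Cauchy--Schwarz with a bounded number of terms (by \hyperlink{vd}{$\on{VD}$}), Lemma \ref{l:lip}-(d), and the standard Poincar\'e-based bound $|f_{B(z,\varepsilon)} - f_{B(x_{i},\varepsilon)}|^{2} \lesssim \varepsilon^{2}\Gamma(f,f)(B(x_{i},C\varepsilon))/m(B_{i})$ --- using \hyperlink{pi}{$\on{PI}(\Psi)$} together with $\Psi(\varepsilon) \lesssim \varepsilon^{2}$ from Lemma \ref{l:ge2} --- then give $\Gamma(f_{\varepsilon},f_{\varepsilon})(B_{i}) \lesssim \Gamma(f,f)(B(x_{i},C\varepsilon))$. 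Summing over $i$ with bounded overlap yields $\mathcal{E}(f_{\varepsilon},f_{\varepsilon}) \lesssim \mathcal{E}(f,f)$.

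Finally, $\{f_{\varepsilon}\}$ is bounded in the Hilbert space $(\mathcal{F},\mathcal{E}_{1})$ and converges to $f$ in $L^{2}(X,m)$, so a subsequence $\{f_{\varepsilon_{k}}\}$ converges weakly to $f$ in $(\mathcal{F},\mathcal{E}_{1})$. The Banach--Saks theorem --- exactly as invoked in the proof of Lemma \ref{l:dist} --- then produces a Ces\`aro-mean sequence $\tilde{f}_{n}$ of $\{f_{\varepsilon_{k}}\}$ converging to $f$ in $\mathcal{E}_{1}$-norm. Each $\tilde{f}_{n}$ is a finite linear combination of the $f_{\varepsilon_{k}}$'s, so $\Gamma(\tilde{f}_{n},\tilde{f}_{n}) \ll m$ by Lemma \ref{l:lin}-(a), whence $\Gamma(f,f) \ll m$ follows from Lemma \ref{l:cont-ac-sing}-(a). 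The main technical obstacle is the uniform energy bound: executing the strong-locality shift cleanly to reduce to a locally finite sum, and then converting the differences of ball-averages into $\Gamma(f,f)$-mass via the Poincar\'e inequality at the Gaussian scale $\Psi(\varepsilon) \asymp \varepsilon^{2}$.
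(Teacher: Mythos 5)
Your proposal is correct and follows essentially the same route as the paper's own proof: you use the same Lipschitz partition-of-unity averaging $f_{\varepsilon}=\sum_{z}f_{B(z,\varepsilon)}\varphi_{z}$, the same Poincar\'e-based uniform bound $\mathcal{E}(f_{\varepsilon},f_{\varepsilon})\lesssim\mathcal{E}(f,f)$ obtained via Lemmas \ref{l:ge2} and \ref{l:lip}, and the same Banach--Saks/Ces\`aro-mean closing step together with Lemmas \ref{l:lin}-(a) and \ref{l:cont-ac-sing}-(a). The only cosmetic difference is that you phrase the localization by subtracting the constant $f_{B(x_{i},\varepsilon)}\psi_{i}$ on each ball (and invoke weak compactness before Banach--Saks, which is not needed since Banach--Saks already applies to any bounded sequence), whereas the paper expands $f_{n}$ directly on $B(z,n^{-1})$; these are the same calculation.
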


\begin{proof}
Since $\mathcal{F} \cap \mathcal{C}_{\mathrm{c}}(X)$ is dense in
$(\mathcal{F},\mathcal{E}_{1})$ by the regularity of $(\mathcal{E},\mathcal{F})$,
by Lemma \ref{l:cont-ac-sing}-(a) it suffices to show that $\Gamma(f,f) \ll m$
for all $f \in \mathcal{F} \cap \mathcal{C}_{\mathrm{c}}(X)$.

Let $f \in \mathcal{F} \cap \mathcal{C}_{\mathrm{c}}(X)$. Noting that Lemma \ref{l:lip}
is applicable by Lemma \ref{l:ge2}, let $r_{1},r_{0}>0$ be the constants in
Lemmas \ref{l:ge2} and \ref{l:lip}, respectively. Let $n \in \mathbb{N}$ satisfy
$4 n^{-1} < r_{1}\wedge r_{0}$, let $N_{n} \subset X$ be an $n^{-1}$-net in $(X,d)$
and let $\{\varphi_{z}\}_{z \in N_{n}}$ be the Lipschitz partition of unity
as given in Lemma \ref{l:lip}. We define
\begin{equation} \label{e:dom1}
f_{n} := \sum_{z \in N_{n}} f_{B(z,n^{-1})} \varphi_{z}, \quad \textrm{ where}
	\quad f_{B(z,n^{-1})} := \frac{1}{m(B(z,n^{-1}))} \int_{B(z,n^{-1})} f\,dm,
\end{equation}
so that $f_{n}$ is in fact a finite linear combination of $\{\varphi_{z}\}_{z \in N_{n}}$
by the relative compactness of $\bigcup_{x\in\supp_{m}[f]}B(x,n^{-1})$ in $X$ and hence
satisfies $f_{n}\in\mathcal{F}\cap\mathcal{C}_{\mathrm{c}}(X)$ and, by Lemma \ref{l:lin}-(a),
\begin{equation} \label{e:dom2}
\Gamma(f_{n},f_{n}) \ll m.
\end{equation}
Since $\|f_{n}\|_{\sup}\leq\|f\|_{\sup}$ by Lemma \ref{l:lip}-(a),(b), we easily see that
\begin{equation} \label{e:fn-Lipschitz}
|f_{n}(x)-f_{n}(y)| \lesssim n\|f\|_{\sup} d(x,y) \qquad \textrm{for any $x,y\in X$}
\end{equation}
by treating the case of $d(x,y)\geq n^{-1}$ and that of $d(x,y)<n^{-1}$ separately as
in \eqref{e:lp3} and \eqref{e:lp4} and using Lemma \ref{l:lip}-(b),(c)
and \hyperlink{vd}{$\on{VD}$} for the latter case, and $f_{n}$ is thus Lipschitz.
Furthermore by Lemma \ref{l:lip}-(a),(b), for any $x\in X$ we have
\begin{align*}
|f_{n}(x)-f(x)|
	&= \Bigl| \sum\nolimits_{z \in N_{n} \cap B(x,2n^{-1})}\bigl(f_{B(z,n^{-1})}-f(x)\bigr)\varphi_{z}(x) \Bigr|\\
&\leq \sum\nolimits_{z \in N_{n} \cap B(x,2n^{-1})}\bigl|f_{B(z,n^{-1})}-f(x)\bigr|\varphi_{z}(x)\\
&\leq \sup \bigl\{ |f(w)-f(x)| \bigm| w \in B(x, 3n^{-1}) \bigr\},
\end{align*}
which together with the uniform continuity of $f\in\mathcal{C}_{\mathrm{c}}(X)$ on $X$ yields
\begin{equation} \label{e:dom3}
\|f_{n}-f\|_{\sup}
	\leq \sup \bigl\{ |f(z)-f(w)| \bigm|\textrm{$z,w\in X$, $d(z,w) < 3n^{-1}$}\bigr\}
	\xrightarrow{n\to\infty} 0.
\end{equation}
Also, choosing $(x_{0},r) \in X \times (0,\infty)$ so that $\supp_{m}[f] \subset B(x_{0},r)$,
we have $\supp_{m}[f_{n}] \subset B(x_{0},r+4)$ by Lemma \ref{l:lip}-(b), and therefore
from \eqref{e:dom3} we obtain
\begin{equation} \label{e:dom4}
\|f_{n}-f\|_{L^{2}(X,m)} \leq \|f_{n}-f\|_{\sup} m(B(x_{0},r+4))^{1/2} \xrightarrow{n\to\infty} 0.
\end{equation}

On the other hand, using \hyperlink{pi}{$\on{PI}(\Psi)$} together with
\hyperlink{vd}{$\on{VD}$} and Lemma \ref{l:ge2} in the same way as \eqref{e:hm1},
for all $z,w \in N_{n}$ with $d(z,w) \le 3 n^{-1}$ we have
\begin{equation} \label{e:dom5}
\bigl|f_{B(z,n^{-1})}- f_{B(w,n^{-1})}\bigr|^{2}
	\lesssim \frac{n^{-2}}{m(B(z,n^{-1}))} \int_{B(z,4 An^{-1})} d\Gamma(f,f),
\end{equation}
where $A\geq 1$ is the constant in \hyperlink{pi}{$\on{PI}(\Psi)$}.
For each $z\in N_{n}$, observing that
\begin{equation*}
f_{n}(x) = f_{B(z,n^{-1})} + \sum_{w\in N_{n} \cap B(z,3n^{-1})} \bigl(f_{B(w,n^{-1})} - f_{B(z,n^{-1})}\bigr)\varphi_{w}(x)
	\quad \textrm{ for all $x \in B(z,n^{-1})$}
\end{equation*}
by Lemma \ref{l:lip}-(a),(b), we see from the strong locality of $(\mathcal{E},\mathcal{F})$,
\cite[Corollary 3.2.1]{FOT} (or \cite[Theorem 4.3.8]{CF}), \eqref{e:triangle-energy}
and the Cauchy--Schwarz inequality that
\begin{align} \label{e:dom6}
&\Gamma(f_{n},f_{n})\bigl(B(z,n^{-1})\bigr) \\
&\leq \#\bigl(N_{n} \cap B(z,3n^{-1})\bigr) \sum_{w \in N_{n} \cap B(z,3n^{-1})} \bigl|f_{B(w,n^{-1})} - f_{B(z,n^{-1})}\bigr|^{2} \Gamma(\varphi_{w},\varphi_{w})\bigl(B(z,n^{-1})\bigr) \nonumber \\ 
&\lesssim \Gamma(f,f)\bigl(B(z,4An^{-1})\bigr) \qquad \textrm{(by \hyperlink{vd}{$\on{VD}$}, \eqref{e:dom5} and Lemma \ref{l:lip}-(d)).} \nonumber
\end{align}
Since $X=\bigcup_{z\in N_{n}}B(z,n^{-1})$ and $\sum_{z \in N_{n}} \one_{B(z,4 A n^{-1})} \lesssim 1$
by \hyperlink{vd}{$\on{VD}$}, from \eqref{e:EnergyMeasTotalMass} and \eqref{e:dom6} we obtain
\begin{equation} \label{e:dom7}
\mathcal{E}(f_{n},f_{n}) \leq \sum_{z \in N_{n}} \Gamma(f_{n},f_{n})\bigl(B(z,n^{-1})\bigr)
	\lesssim \sum_{z \in N_{n}} \Gamma(f,f)\bigl(B(z,4An^{-1})\bigr) \lesssim \mathcal{E}(f,f).
\end{equation}

It follows from \eqref{e:dom4} and \eqref{e:dom7} that $\{f_{n}\}_{n>4(r_{1}\wedge r_{0})^{-1}}$
is a bounded sequence in $(\mathcal{F},\mathcal{E}_{1})$, and hence by the Banach--Saks theorem
\cite[Theorem A.4.1-(i)]{CF} there exists a subsequence 
$\{f_{n_{k}}\}_{k=1}^{\infty}$ of $\{f_{n}\}_{n>4(r_{1}\wedge r_{0})^{-1}}$ such that
its Ces\`{a}ro mean sequence $\{i^{-1}\sum_{k=1}^{i}f_{n_{k}}\}_{i=1}^{\infty}$
converges in norm in $(\mathcal{F},\mathcal{E}_{1})$, but then the limit must
necessarily be $f$ by \eqref{e:dom4}. Now by \eqref{e:dom2}, Lemma \ref{l:lin}-(a) and
Lemma \ref{l:cont-ac-sing}-(a), we obtain $\Gamma(f,f) \ll m$, completing the proof.
\end{proof}

\begin{remark} \label{rmk:dom}
The above proof of Proposition \ref{p:dom} is inspired by \cite[Proof of Proposition 4.7]{KST}.
Note that it also shows that $\mathcal{F} \cap \on{Lip}_{\mathrm{c}}(X,d)$ is dense
in $(\mathcal{F},\mathcal{E}_{1})$ in the situation of Proposition \ref{p:dom}, where
$\on{Lip}_{\mathrm{c}}(X,d):=\{f\in\mathcal{C}_{\mathrm{c}}(X)\mid\textrm{$f$ is Lipschitz with respect to $d$}\}$.
We remark that Proposition \ref{p:dom} and this denseness were proved also in \cite[Lemma 2.11]{ABCRST}
with a very similar proof under the additional a priori assumptions that $d$ is
the intrinsic metric $d_{\on{int}}$ and that $\Psi$ is given by $\Psi(r)=r^{2}$.
\end{remark}

\begin{proposition}[Minimality of $m$] \label{p:min}
Let $(X,d,m,\mathcal{E},\mathcal{F})$ be a MMD space satisfying \hyperlink{vd}{$\on{VD}$},
\hyperlink{pi}{$\on{PI}(\Psi)$} and \hyperlink{cs}{$\on{CS}(\Psi)$}, and assume further
that $\Psi$ satisfies \eqref{e:case-ac}. If $\nu$ is a minimal energy-dominant measure
of $(\mathcal{E},\mathcal{F})$, then $m \ll \nu$.
\end{proposition}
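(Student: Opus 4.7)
The plan is to argue by contradiction. Since $m$ is itself an energy-dominant measure by Proposition \ref{p:dom}, the minimality of $\nu$ yields $\nu \ll m$ at once, so the task reduces to proving $m \ll \nu$. Assume therefore that some Borel set $A$ satisfies $\nu(A) = 0$ and $m(A) > 0$; the characterization of minimal energy-dominant measures noted at the end of Definition \ref{d:minimal-energy-dominant} then forces $\Gamma(f,f)(A) = 0$ for every $f \in \mathcal{F}$. I would then pick a Lebesgue density point $x_{0} \in A$ of $A$ with respect to $m$, which exists for $m$-a.e.\ $x \in A$ on any doubling metric measure space (cf.\ \cite[(2.8)]{Hei}).

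The main idea is to apply \hyperlink{pi}{$\on{PI}(\Psi)$} to $f := f_{x_{0}, \rho}$ from Lemma \ref{l:dist} on the ball $B(x_{0}, r)$, with $\rho := r/K_{0}$ for a fixed large $K_{0} \in \mathbb{N}$, and derive contradictory estimates on its two sides. On the right-hand side, strong locality confines $\Gamma(f,f)$ to $\overline{B(x_{0}, \rho)}$, Lemma \ref{l:dist} bounds it there by $C^{2}\rho^{-2}m$, and the hypothesis $\Gamma(f,f)(A) = 0$ reduces the effective integration to $A^{c}$; combined with density of $A$ at $x_{0}$ (which makes $m(\overline{B(x_{0}, \rho)} \setminus A) \leq \epsilon\, m(B(x_{0}, r))$ for small $r$) and the estimate $\Psi(r) \lesssim r^{2}$ from Lemma \ref{l:ge2}, this yields an upper bound of the form $\widetilde{C} \epsilon \, m(B(x_{0}, r))$, for any prescribed $\epsilon > 0$ once $r$ is sufficiently small. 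For the left-hand side, the mean $f_{B(x_{0}, r)}$ is at most $m(B(x_{0}, \rho))/m(B(x_{0}, r))$, while $f \geq 1/2$ on $B(x_{0}, \rho/2)$; arranging the mean to be at most $1/4$ by the choice of $K_{0}$ and combining with $m(B(x_{0}, \rho/2)) \gtrsim m(B(x_{0}, r))$ from \hyperlink{vd}{$\on{VD}$}, I would obtain $\int_{B(x_{0}, r)} |f - f_{B(x_{0}, r)}|^{2}\,dm \gtrsim m(B(x_{0}, r))$, so a small enough $\epsilon$ closes the contradiction.

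The hard part will be securing a fixed $K_{0}$ with $m(B(x_{0}, r/K_{0}))/m(B(x_{0}, r)) \leq 1/4$ at all sufficiently small $r$, i.e., a form of reverse volume doubling that is not explicitly built into the hypotheses. I would derive it from the conjunction of \hyperlink{vd}{$\on{VD}$} and connectedness of $(X, d)$, the latter being a consequence of the standing Dirichlet-form assumptions via Theorem \ref{t:HKE-VDPICS} together with Remark \ref{rmk:HKE-VDPICS}(a). Connectedness renders $d(x_{0}, \cdot)(X)$ a connected subset of $[0, \infty)$, so for each $r \in (0, \diam(X, d))$ there is a point $y \in X$ with $d(x_{0}, y) \in (r/2, r)$; the disjoint balls $B(x_{0}, r/4)$ and $B(y, r/4)$ inside $B(x_{0}, 2r)$ together with \hyperlink{vd}{$\on{VD}$} then produce a uniform reverse-doubling estimate $m(B(x, r)) \geq (1 + \delta) m(B(x, r/N))$ for some fixed $N > 1$ and $\delta > 0$, and iterating this estimate supplies the required $K_{0}$.
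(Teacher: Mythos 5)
Your proposal hits the same key estimates as the paper --- Lemma~\ref{l:dist}, \hyperlink{pi}{$\on{PI}(\Psi)$}, \hyperlink{vd}{$\on{VD}$} --- but the route differs, and there is a genuine gap in the reverse-doubling step. The paper forms the Lebesgue decomposition $m = m_{a} + m_{s}$ with respect to $\nu$; since $\Gamma(f_{x,r},f_{x,r}) \ll \nu \perp m_{s}$, the bound $\Gamma(f_{x,r},f_{x,r}) \le C^{2} r^{-2} m$ from Lemma~\ref{l:dist} upgrades automatically to $\Gamma(f_{x,r},f_{x,r}) \le C^{2} r^{-2} m_{a}$, and \hyperlink{pi}{$\on{PI}(\Psi)$} applied at the \emph{single} scale $\rho = r$ then gives the pointwise estimate $m(B(x,Ar)) \lesssim m_{a}(B(x,Ar))$ uniformly over all $x$ and all small $r$; inner regularity and a covering argument finish by forcing $m_{s}(X)=0$. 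For the left-hand side the paper needs only $1 - (f_{x,r})_{B(x,r)} \ge \delta$ for a single constant $\delta$ coming from one annular point and \hyperlink{vd}{$\on{VD}$}, because $f_{x,r} > 1 - \delta/2$ on $B(x,\delta r/2)$; no iteration is needed. Your density-point route with the two-scale choice $\rho = r/K_{0}$ and the requirement $f_{B(x_{0},r)} \le 1/4$ is not wrong, but it makes the argument longer and makes it depend on an iterated reverse volume doubling that the paper's proof avoids entirely.

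The genuine gap is the justification of connectedness. Theorem~\ref{t:HKE-VDPICS} deduces connectedness from \hyperlink{vd}{$\on{VD}$} and \hyperlink{hke}{$\on{HKE}(\Psi)$}, but the present proposition assumes only \hyperlink{vd}{$\on{VD}$}, \hyperlink{pi}{$\on{PI}(\Psi)$} and \hyperlink{cs}{$\on{CS}(\Psi)$}, not \hyperlink{hke}{$\on{HKE}(\Psi)$}. Remark~\ref{rmk:HKE-VDPICS}\textup{(a)} cannot supply the missing link: that remark takes connectedness (and non-compactness) as \emph{input} and produces \hyperlink{hke}{$\on{HKE}(\Psi)$}, so the chain you describe is circular. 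What the paper actually uses at the corresponding step is the existence of $y \in B(x,3r/4)\setminus B(x,r/2)$ when $B(x,r) \ne X$, cited from \cite[Proof of Corollary 2.3]{Mur}; that, together with \hyperlink{vd}{$\on{VD}$}, delivers the constant $\delta$ in one step and, if you keep your route, also yields the iterated reverse doubling you need. You should replace the flawed citation with this reference (or with a direct proof of annulus non-emptiness from the stated hypotheses).
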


\begin{proof}
Let	$m=m_{a}+m_{s}$ be the Lebesgue decomposition of $m$ with respect to $\nu$,
so that $m_{a} \ll \nu$ and $m_{s} \perp \nu$. We are to show that $m_{s}(X)=0$,
which will yield $m=m_{a} \ll \nu$.

Noting that Lemma \ref{l:dist} is applicable by Lemma \ref{l:ge2}, let $r_{1}\in(0,\diam(X,d))$
and $C,r_{0}>0$ be the constants in Lemmas \ref{l:ge2} and \ref{l:dist}, respectively.
Then by Lemma \ref{l:dist}, for all $(x,r) \in X\times(0,r_{0})$ we have
$f_{x,r} := (1-r^{-1}d(x,\cdot))^{+}\in\mathcal{F}$ and
$\Gamma(f_{x,r},f_{x,r}) \leq C^{2}r^{-2}m$, which together with
$\Gamma(f_{x,r},f_{x,r}) \ll \nu \perp m_{s}$ implies that
\begin{equation} \label{e:mi1}
\Gamma(f_{x,r},f_{x,r}) \leq C^{2}r^{-2}m_{a}.
\end{equation}
On the other hand, for each $(x,r) \in X\times (0,r_{1}/2)$, by $B(x,r)\not=X$
(recall that $r_{1}\in(0,\diam(X,d))$) and \cite[Proof of Corollary 2.3]{Mur} there exists
$y\in B(x,3r/4)\setminus B(x,r/2)$, and then there exists $\delta\in(0,1)$
determined solely by the constant $C_{D}$ in \hyperlink{vd}{$\on{VD}$} such that
\begin{equation} \label{e:mi2}
1-(f_{x,r})_{B(x,r)} \geq \frac{m(B(y,r/4))}{4m(B(x,r))} \geq \delta
	\mspace{10.98mu}\textrm{by $B(y,r/4)\subset B(x,r)\setminus B(x,r/4)$ and \hyperlink{vd}{$\on{VD}$},}
\end{equation}
where $(f_{x,r})_{B(x,r)}:=m(B(x,r))^{-1}\int_{B(x,r)}f_{x,r}\,dm$.
Thus for all $(x,r) \in X\times (0,r_{1}/2)$ we have
$f_{x,r}-(f_{x,r})_{B(x,r)}\geq\delta/2$ on $B(x,\delta r/2)$ by \eqref{e:mi2} and hence
\begin{equation} \label{e:mi3}
\begin{split}
m(B(x,Ar)) \lesssim m(B(x,\delta r/2))
	&\lesssim \int_{B(x,r)} \bigl|f_{x,r}-(f_{x,r})_{B(x,r)}\bigr|^{2}\,dm \qquad \textrm{(by \hyperlink{vd}{$\on{VD}$})} \\
&\lesssim \Psi(r) \Gamma(f_{x,r},f_{x,r})(B(x,Ar)) \qquad \textrm{(by \hyperlink{pi}{$\on{PI}(\Psi)$})} \\
&\lesssim m_{a}(B(x,Ar)) \qquad \textrm{(by Lemma \ref{l:ge2} and \eqref{e:mi1}),}
\end{split}
\end{equation}
where $A\geq 1$ is the constant in \hyperlink{pi}{$\on{PI}(\Psi)$}.

Now assume to the contrary that $m_{s}(X)>0$. Then by $m_{a} \ll \nu \perp m_{s}$
and the inner regularity of $m_{s}$ (see, e.g., \cite[Theorem 2.18]{Rud}),
there exists a compact subset $K$ of $X$ such that $m_{s}(K)>0$ and $m_{a}(K)=0$.
Let $\varepsilon \in (0,r_{1}/2)$, set $K_{\varepsilon}:=\bigcup_{x\in K}B(x,\varepsilon)$
and let $N_{\varepsilon}$ be a $2\varepsilon$-net in $(K,d)$,
so that $K_{\varepsilon}$ is relatively compact in $X$,
$K \subset \bigcup_{x\in N_{\varepsilon}} B(x,2\varepsilon)$
and $B(x,\varepsilon)\cap B(y,\varepsilon)=\emptyset$ for any $x,y\in N_{\varepsilon}$
with $x\not=y$. Using these properties, we obtain
\begin{align*}
0 < m(K) \leq \mspace{-1.5mu} \sum_{x\in N_{\varepsilon}} m(B(x,2\varepsilon))
	&\lesssim \mspace{-1.5mu} \sum_{x\in N_{\varepsilon}} m(B(x,\varepsilon))
	\lesssim \mspace{-1.5mu} \sum_{x\in N_{\varepsilon}} m_{a}(B(x,\varepsilon))
	\mspace{7mu} \textrm{(by \hyperlink{vd}{$\on{VD}$} and \eqref{e:mi3})}\\
&= m_{a}\Bigl(\bigcup\nolimits_{x\in N_{\varepsilon}}B(x,\varepsilon)\Bigr)
	\leq m_{a}(K_{\varepsilon}) \xrightarrow{\varepsilon \downarrow 0} m_{a}(K)=0,
\end{align*}
which is a contradiction and thereby proves that $m_{s}(X)=0$.
\end{proof}

As the last step of the proof of Theorem \ref{t:main}-(b), we now establish first
the finiteness of $d_{\on{int}}$, and then the bi-Lipschitz equivalence of $d_{\on{int}}$
to $d$ under the additional assumption of the chain condition for $(X,d)$.

\begin{proposition} \label{p:bilip}
Let $(X,d,m,\mathcal{E},\mathcal{F})$ be a MMD space satisfying \hyperlink{vd}{$\on{VD}$},
\hyperlink{pi}{$\on{PI}(\Psi)$} and \hyperlink{cs}{$\on{CS}(\Psi)$}, and assume further
that $\Psi$ satisfies \eqref{e:case-ac}. Then $d_{\on{int}}$ is a geodesic metric on $X$.
Moreover, if additionally $(X,d)$ satisfies the chain condition, then
$d_{\on{int}}$ is bi-Lipschitz equivalent to $d$.
\end{proposition}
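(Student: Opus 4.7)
The plan is to combine the two one-sided local bounds from Lemmas \ref{l:pi-dist} and \ref{l:dist} to obtain the local bi-Lipschitz equivalence \eqref{e:bi-Lipschitz-d-dint} asserted in Theorem \ref{t:main}-(b), and then to boost this local equivalence to the global statements of the proposition. First I would verify that $d_{\on{int}}$ is a genuine metric (rather than a pseudometric possibly taking the value $+\infty$): the lower bound from Lemma \ref{l:dist} gives positive-definiteness on pairs sufficiently close in $d$, while the upper bound from Lemma \ref{l:pi-dist} gives finiteness on such pairs; jointly they show that on small enough $d$-balls the topologies induced by $d$ and by $d_{\on{int}}$ coincide. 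To promote finiteness of $d_{\on{int}}(x,y)$ to arbitrary $x,y\in X$, I would apply \cite[Lemma 2.2]{Mur} (as flagged in the outline in Subsection \ref{ssec:outline}), covering a suitable continuum from $x$ to $y$ by finitely many small $d$-balls on which the local upper bound applies and summing along the cover.

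Next, for the geodesic property of $d_{\on{int}}$ in the general case, I would invoke \cite[Theorem 1]{Stu95b}, which under exactly the strongly local regular Dirichlet form hypothesis available here shows that $d_{\on{int}}$ is a length (i.e., intrinsic) metric. Since $(X,d)$ is complete and locally compact and $d_{\on{int}}$ is locally bi-Lipschitz equivalent to $d$, the metric space $(X,d_{\on{int}})$ is also complete, locally compact, and has the same compact sets, so a Hopf--Rinow type argument upgrades the length property to the existence of minimizing geodesics, giving the first assertion.

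Now assume the chain condition for $(X,d)$. For the global upper bound $d_{\on{int}}(x,y)\le C' d(x,y)$, I would select an $\varepsilon$-chain $\{x_{i}\}_{i=0}^{N}$ from $x$ to $y$ with $\varepsilon<r_{2}$ and $\sum_{i=0}^{N-1}d(x_{i},x_{i+1})\le C\,d(x,y)$; by construction each consecutive $d(x_{i},x_{i+1})<r_{2}$, so \eqref{e:bi-Lipschitz-d-dint} yields $d_{\on{int}}(x_{i},x_{i+1})\le C_{2}d(x_{i},x_{i+1})$, and summing by the triangle inequality gives $d_{\on{int}}(x,y)\le CC_{2}d(x,y)$. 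For the matching lower bound, I would exploit the already established geodesic property of $d_{\on{int}}$: pick a $d_{\on{int}}$-geodesic from $x$ to $y$, subdivide it so finely that each consecutive pair of subdivision points has $d_{\on{int}}$-distance below $r_{2}$, apply \eqref{e:bi-Lipschitz-d-dint} in reversed form on each piece (which is legitimate because the upper bound just proved forces small $d_{\on{int}}$-distance to entail small $d$-distance), and sum along the subdivision using the triangle inequality for $d$.

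The main obstacle I anticipate lies in the careful bookkeeping of the threshold $r_{2}$ in \eqref{e:bi-Lipschitz-d-dint}: the local equivalence is only available under the proviso $d(x,y)\wedge d_{\on{int}}(x,y)<r_{2}$, so the chain argument for the lower bound must bootstrap the upper bound proved earlier in order to guarantee that short $d_{\on{int}}$-steps are also short in $d$. A secondary subtlety is the verification of the hypotheses of \cite[Theorem 1]{Stu95b} in our generality, which will follow from strong locality, regularity, and \hyperlink{pi}{$\on{PI}(\Psi)$} applied on small balls to rule out any degeneracy of $d_{\on{int}}$.
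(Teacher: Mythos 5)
Your proposal is correct and follows essentially the same route as the paper: local bi-Lipschitz equivalence from Lemmas \ref{l:pi-dist} and \ref{l:dist}, finiteness of $d_{\on{int}}$ via \cite[Lemma 2.2]{Mur} along $\varepsilon$-chains, the geodesic property via \cite[Theorem 1]{Stu95b}, and the global upper bound from the chain condition by summing along an $\varepsilon$-chain. The only cosmetic difference is that the paper phrases the global lower bound more economically (it notes that the geodesic property of $d_{\on{int}}$ forces $d_{\on{int},\varepsilon}=d_{\on{int}}$, then reads off the lower bound from the intermediate chain estimate \eqref{e:bl2} without any chain condition), and it does not need a separate Hopf--Rinow step since \cite[Theorem 1]{Stu95b} already yields the geodesic property once $d_{\on{int}}$ is shown to be a complete metric inducing the original topology; your argument for the lower bound (subdividing a $d_{\on{int}}$-geodesic and applying the local estimate piecewise) is equivalent and, as you noted, does not actually use the chain condition, so it could be stated before that hypothesis is introduced.
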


\begin{proof}
By Lemmas \ref{l:ge2}, \ref{l:pi-dist} and \ref{l:dist}, there exist $r_{0}>0$ and $C \geq 1$ such that
\begin{equation} \label{e:bl1}
C^{-1} d(x,y) \leq d_{\on{int}}(x,y) \leq C d(x,y)
	\mspace{13mu} \textrm{for all $x,y \in X$ with $d(x,y) \wedge d_{\on{int}}(x,y) < r_{0}$.}
\end{equation}
Let $d_{\varepsilon}$ and $d_{\on{int},\varepsilon}$ denote the $\varepsilon$-chain metric
corresponding to $d$ and $d_{\on{int}}$ respectively, as defined in Definition \ref{d:chain}-(a)
for each $\varepsilon > 0$; note that $d_{\on{int},\varepsilon}$ can be defined by \eqref{e:depsilon}
even though $d_{\on{int}}$ is yet to be shown to be a metric on $X$. Let $\varepsilon \in (0,r_{0})$. Then
we easily see from \eqref{e:depsilon}, \eqref{e:bl1} and the triangle inequality for $d$ and $d_{\on{int}}$
that for all $x,y \in X$,
\begin{equation} \label{e:bl2}
C^{-1}d(x,y) \leq \bigl(C^{-1} d_{C\varepsilon}(x,y)\bigr) \vee d_{\on{int}}(x,y)
	\leq d_{\on{int},\varepsilon}(x,y) \leq C d_{C^{-1}\varepsilon}(x,y)<\infty,
\end{equation}
where we used the fact that $d_{C^{-1}\varepsilon}(x,y)<\infty$ by \cite[Lemma 2.2]{Mur}.
It follows from \eqref{e:bl2}, \eqref{e:bl1} and the completeness of $(X,d)$ that
$d_{\on{int}}$ is a complete metric on $X$ compatible with the original topology of
$(X,d)$, and thus we can apply \cite[Theorem 1]{Stu95b} to obtain the geodesic property
of $d_{\on{int}}$, which together with \eqref{e:depsilon} and \eqref{e:bl2} implies that
\begin{equation} \label{e:bl3}
d_{\on{int}}(x,y) = d_{\on{int},\varepsilon}(x,y) \geq C^{-1}d(x,y) \qquad \textrm{for all $x,y \in X$.}
\end{equation}

Finally, assuming now that $(X,d)$ satisfies the chain condition, for some $C'\geq 1$
we have $d_{C^{-1}\varepsilon}(x,y)\leq C' d(x,y)$ for all $x,y \in X$,
which in combination with \eqref{e:bl2} shows that
\begin{equation} \label{e:bl4}
d_{\on{int},\varepsilon}(x,y) \leq C d_{C^{-1}\varepsilon}(x,y) \leq CC' d(x,y)
	\qquad \textrm{for all $x,y \in X$.}
\end{equation}
We therefore conclude from \eqref{e:bl3} and \eqref{e:bl4} the bi-Lipschitz
equivalence of $d_{\on{int}}$ to $d$.
\end{proof}

\begin{proof}[Proof of Theorem \textup{\ref{t:main}-(b)}]
We have \eqref{e:ge2} by Lemma \ref{l:ge2}, then \eqref{e:bi-Lipschitz-d-dint} by
\eqref{e:ge2}, Lemmas \ref{l:pi-dist} and \ref{l:dist}, and $m$ is a minimal energy-dominant
measure of $(\mathcal{E},\mathcal{F})$ by Propositions \ref{p:dom} and \ref{p:min}. Finally
by Proposition \ref{p:bilip}, $d_{\on{int}}$ is a geodesic metric on $X$, and it is bi-Lipschitz
equivalent to $d$ under the additional assumption of the chain condition for $(X,d)$.
\end{proof}

\section{Examples: Scale irregular Sierpi\'{n}ski gaskets} \label{s:sisg}

This section is devoted to presenting an application of Theorem \ref{t:main}-(a)
to a class of fractals called \emph{scale irregular Sierpi\'{n}ski gaskets},
which are constructed in a way similar to the standard Sierpi\'{n}ski gasket
($K^{2}$ in Figure \ref{fig:SGs}) but allowing different configurations of the cells
in different scales and thus are not exactly self-similar. This class of fractals
are also called \emph{homogeneous random Sierpi\'{n}ski gaskets} in the literature,
especially when the sequence of cell configurations in different scales is randomly
chosen according to some probability distribution, but here we prefer not to use
this term because we do not make such random construction.
We could introduce an abstract class of self-similar fractals generalizing the
Sierpi\'{n}ski gasket and use them to construct our scale irregular Sierpi\'{n}ski gaskets,
as is done in \cite{BH,Ham00} and \cite[Chapter 24]{Kig12}. For the sake of brevity,
however, we instead consider just a concrete family of self-similar Sierpi\'{n}ski
gaskets, which give rise to the higher dimensional analogs of the $2$-dimensional scale
irregular Sierpi\'{n}ski gaskets considered initially by Hambly in \cite{Ham92}.
\begin{figure}[b]\centering
\includegraphics[height=110pt]{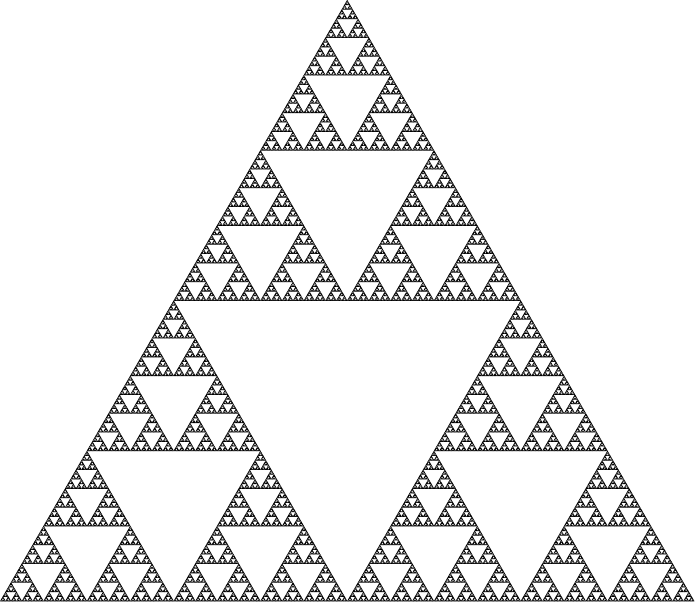}\quad
\includegraphics[height=110pt]{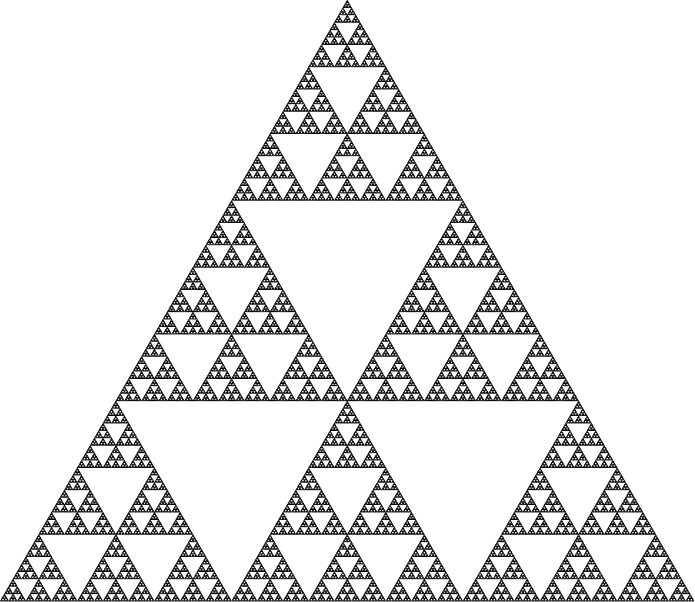}\quad
\includegraphics[height=110pt]{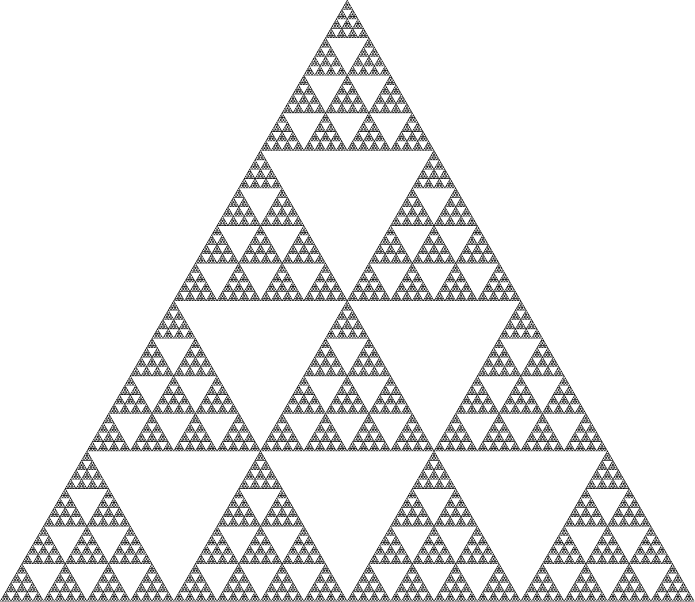}
\caption{The $2$-dimensional level-$l$ (self-similar) Sierpi\'{n}ski gaskets $K^{l}$ ($l=2,3,4$)}
\label{fig:SGs}
\end{figure}

Throughout this section, we fix $N\in\mathbb{N}\setminus\{1\}$ and a regular
$N$-dimensional simplex $\triangle\subset\mathbb{R}^{N}$ with side length $1$ and the set
of its vertices $\{q_{k}\mid k\in\{0,\ldots,N\}\}=:V_{0}$, where $\triangle$ denotes the convex
hull of $V_{0}$ in $\mathbb{R}^{N}$ and is thus a compact convex subset of $\mathbb{R}^{N}$.
For each $l\in\mathbb{N}\setminus\{1\}$, we set
$S_{l}:=\bigl\{(i_{k})_{k=1}^{N}\in(\mathbb{N}\cup\{0\})^{N}\bigm|\sum_{k=1}^{N}i_{k}\leq l-1\bigr\}$,
and for each $i=(i_{k})_{k=1}^{N}\in S_{l}$ set
$q^{l}_{i}:=q_{0}+\sum_{k=1}^{N}(i_{k}/l)(q_{k}-q_{0})$ and define
$F^{l}_{i}:\mathbb{R}^{N}\to\mathbb{R}^{N}$ by $F^{l}_{i}(x):=q^{l}_{i}+l^{-1}(x-q_{0})$.

Let $\bm{l}=(l_{n})_{n=1}^{\infty}\in(\mathbb{N}\setminus\{1\})^{\mathbb{N}}$ satisfy
$\sup_{n\in\mathbb{N}}l_{n}<\infty$, set $W^{\bm{l}}_{n}:=\prod_{k=1}^{n}S_{l_{k}}$ for each
$n\in\mathbb{N}$ and $F^{\bm{l}}_{w}:=F^{l_{1}}_{w_{1}}\circ\cdots\circ F^{l_{n}}_{w_{n}}$
for each $n\in\mathbb{N}$ and $w=w_{1}\ldots w_{n}\in W^{\bm{l}}_{n}$. We define the
\emph{$N$-dimensional level-$\bm{l}$ scale irregular Sierpi\'{n}ski gasket} $K^{\bm{l}}$
as the non-empty compact subset of $\triangle$ given by
\begin{equation}\label{e:sisg}
K^{\bm{l}}:=\bigcap_{n=1}^{\infty}\bigcup_{w\in W^{\bm{l}}_{n}}F^{\bm{l}}_{w}(\triangle)
\end{equation}
(see Figure \ref{fig:SISG}); note that
$\bigl\{\bigcup_{w\in W^{\bm{l}}_{n}}F^{\bm{l}}_{w}(\triangle)\bigr\}_{n=1}^{\infty}$
is a strictly decreasing sequence of non-empty compact subsets of $\triangle$ and that
\begin{equation}\label{e:sisg-boundary}
F^{\bm{l}}_{w}(\triangle)\cap F^{\bm{l}}_{v}(\triangle)=F^{\bm{l}}_{w}(V_{0})\cap F^{\bm{l}}_{v}(V_{0})
	\quad\textrm{for any $n\in\mathbb{N}$ and any $w,v\in W^{\bm{l}}_{n}$ with $w\not=v$.}
\end{equation}
We also set $V^{\bm{l}}_{0}:=V_{0}$ and
$V^{\bm{l}}_{n}:=\bigcup_{w\in W^{\bm{l}}_{n}}F^{\bm{l}}_{w}(V_{0})$ for each $n\in\mathbb{N}$,
so that $\{V^{\bm{l}}_{n}\}_{n=0}^{\infty}$ is a strictly increasing sequence of finite
subsets of $K^{\bm{l}}$ and $\bigcup_{n=0}^{\infty}V^{\bm{l}}_{n}$ is dense in $K^{\bm{l}}$.
In particular, for each $l\in\mathbb{N}\setminus\{1\}$ we let $\bm{l}_{l}:=(l)_{n=1}^{\infty}$
denote the constant sequence with value $l$, set $K^{l}:=K^{\bm{l}_{l}}$ and
$V^{l}_{n}:=V^{\bm{l}_{l}}_{n}$ for $n\in\mathbb{N}\cup\{0\}$, and call $K^{l}$ the
\emph{$N$-dimensional level-$l$ Sierpi\'{n}ski gasket}, which is exactly self-similar
in the sense that $K^{l}=\bigcup_{i\in S_{l}}F^{l}_{i}(K^{l})$ (see Figure \ref{fig:SGs}).
\begin{figure}[t]\centering
\includegraphics[height=300pt]{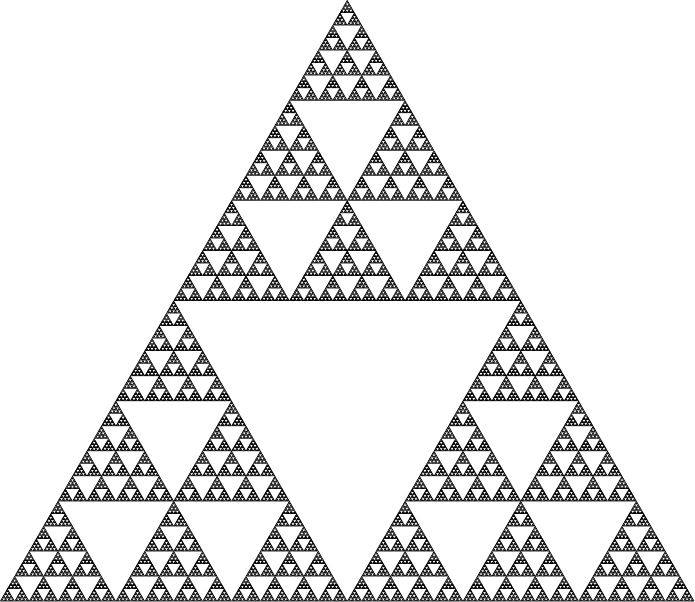}
\caption{A $2$-dimensional level-$\bm{l}$ scale irregular Sierpi\'{n}ski gasket $K^{\bm{l}}$ ($\bm{l}=(2,3,4,2,\ldots)$)}
\label{fig:SISG}
\end{figure}

As discussed in \cite{Ham92,BH,Ham00} (see also \cite[Part 4]{Kig12}),
we can define a canonical MMD space
$(K^{\bm{l}},d_{\bm{l}},m_{\bm{l}},\mathcal{E}^{\bm{l}},\mathcal{F}_{\bm{l}})$
over $K^{\bm{l}}$ with the metric $d_{\bm{l}}$ geodesic, as follows.
First, we define $d_{\bm{l}}:K^{\bm{l}}\times K^{\bm{l}}\to[0,\infty)$ by
\begin{equation}\label{e:sisg-metric}
d_{\bm{l}}(x,y):=\inf\{\on{Length}(\gamma)\mid
	\textrm{$\gamma:[0,1]\to K^{\bm{l}}$, $\gamma$ is continuous, $\gamma(0)=x$, $\gamma(1)=y$}\},
\end{equation}
where $\on{Length}(\gamma)$ denotes the Euclidean length of $\gamma$, i.e., the total
variation of $\gamma$ as an $\mathbb{R}^{N}$-valued map. Then it is easy to see,
by following \cite[Proof of Lemma 2.4]{BH}, that
$d_{\bm{l}}$ is a geodesic metric on $K^{\bm{l}}$ which is bi-Lipschitz equivalent
to the restriction to $K^{\bm{l}}$ of the Euclidean metric on $\mathbb{R}^{N}$.
Next, the standard measure-theoretic arguments immediately show that there exists
a unique Borel probability measure $m_{\bm{l}}$ on $K^{\bm{l}}$ such that
\begin{equation}\label{e:sisg-measure}
m_{\bm{l}}\bigl(F^{\bm{l}}_{w}(K^{\bm{l}})\bigr)=\frac{1}{M^{\bm{l}}_{n}}
	\qquad\textrm{for any $n\in\mathbb{N}$ and any $w\in W^{\bm{l}}_{n}$,}
\end{equation}
where $M^{\bm{l}}_{n}:=(\# S_{l_{1}})\cdots(\# S_{l_{n}})$,
and then $m_{\bm{l}}$ is clearly a Radon measure on $K^{\bm{l}}$ with full support.
The measure $m_{\bm{l}}$ can be considered as the ``uniform distribution on $K^{\bm{l}}$''.

The Dirichlet form $(\mathcal{E}^{\bm{l}},\mathcal{F}_{\bm{l}})$ is constructed as the
``inductive limit'' of a certain canonical sequence of discrete Dirichlet forms on the
finite sets $\{V^{\bm{l}}_{n}\}_{n=0}^{\infty}$ by the standard method presented in
\cite[Chapter 3]{Kig01} (see also \cite[Sections 6 and 7]{Bar98}).
We start with defining a non-negative definite symmetric bilinear form
$\mathcal{E}^{0}:\mathbb{R}^{V_{0}}\times\mathbb{R}^{V_{0}}\to\mathbb{R}$
on $\mathbb{R}^{V_{0}}=\mathbb{R}^{V^{\bm{l}}_{0}}$ by
\begin{equation}\label{e:sisg-form-0}
\mathcal{E}^{0}(f,g)
	:=\frac{1}{2}\sum_{j,k=0}^{N}\bigl(f(q_{j})-f(q_{k})\bigr)\bigl(g(q_{j})-g(q_{k})\bigr),
	\qquad f,g\in\mathbb{R}^{V_{0}}.
\end{equation}
We would like to define a bilinear form $\mathcal{E}^{\bm{l},n}$ on
$\mathbb{R}^{V^{\bm{l}}_{n}}$ for each $n\in\mathbb{N}$ as the sum of the copies of
\eqref{e:sisg-form-0} on $\{F^{\bm{l}}_{w}(V_{0})\}_{w\in W^{\bm{l}}_{n}}$ and
take their limit as $n\to\infty$, but for the existence of their limit they
actually need to be multiplied by certain scaling factors given as follows.
For each $l\in\mathbb{N}\setminus\{1\}$, the Euclidean-geometric symmetry of
$V_{0}=V^{l}_{0}$ and $V^{l}_{1}$ immediately implies the existence of a unique
$r_{l}\in(0,\infty)$ such that for any $f\in\mathbb{R}^{V_{0}}$,
\begin{equation}\label{e:sisg-rl}
\min\Biggl\{\sum_{i\in S_{l}}\mathcal{E}^{0}\bigl(g\circ F^{l}_{i}|_{V_{0}},g\circ F^{l}_{i}|_{V_{0}}\bigr)
	\Biggm|\textrm{$g\in\mathbb{R}^{V^{l}_{1}}$, $g|_{V_{0}}=f$}\Biggr\}
	=r_{l}\mathcal{E}^{0}(f,f),
\end{equation}
and $r_{l}\in(0,1)$ by \cite[Corollary 3.1.9]{Kig01}.
Then setting $\mathcal{E}^{\bm{l},0}:=\mathcal{E}^{0}$ and
defining for each $n\in\mathbb{N}$ a non-negative definite symmetric bilinear form
$\mathcal{E}^{\bm{l},n}:\mathbb{R}^{V^{\bm{l}}_{n}}\times\mathbb{R}^{V^{\bm{l}}_{n}}\to\mathbb{R}$
on $\mathbb{R}^{V^{\bm{l}}_{n}}$ by
\begin{equation}\label{e:sisg-form-ln}
\mathcal{E}^{\bm{l},n}(f,g)
	:=\frac{1}{R^{\bm{l}}_{n}}\sum_{w\in W^{\bm{l}}_{n}}\mathcal{E}^{0}\bigl(f\circ F^{\bm{l}}_{w}|_{V_{0}},g\circ F^{\bm{l}}_{w}|_{V_{0}}\bigr),
	\qquad f,g\in\mathbb{R}^{V^{\bm{l}}_{n}},
\end{equation}
where $R^{\bm{l}}_{n}:=r_{l_{1}}\cdots r_{l_{n}}$, we easily see from \eqref{e:sisg-rl} and
\eqref{e:sisg-boundary} that for any $n\in\mathbb{N}$ and any $f\in\mathbb{R}^{V^{\bm{l}}_{n-1}}$,
\begin{equation}\label{e:sisg-form-compatible}
\min\bigl\{\mathcal{E}^{\bm{l},n}(g,g)\bigm|\textrm{$g\in\mathbb{R}^{V^{\bm{l}}_{n}}$, $g|_{V^{\bm{l}}_{n-1}}=f$}\bigr\}
	=\mathcal{E}^{\bm{l},n-1}(f,f).
\end{equation}
The equality \eqref{e:sisg-form-compatible} allows us to take the
``inductive limit'' of $\{\mathcal{E}^{\bm{l},n}\}_{n=0}^{\infty}$, i.e.,
to define a linear subspace $\mathcal{F}_{\bm{l}}$ of $\mathcal{C}(K^{\bm{l}})$
and a non-negative definite symmetric bilinear form
$\mathcal{E}^{\bm{l}}:\mathcal{F}_{\bm{l}}\times\mathcal{F}_{\bm{l}}\to\mathbb{R}$
on $\mathcal{F}_{\bm{l}}$ by
\begin{align}\label{e:sisg-DF-domain}
\mathcal{F}_{\bm{l}}&:=\Bigl\{f\in\mathcal{C}(K^{\bm{l}})\Bigm|\lim_{n\to\infty}\mathcal{E}^{\bm{l},n}(f|_{V^{\bm{l}}_{n}},f|_{V^{\bm{l}}_{n}})<\infty\Bigr\},\\
\mathcal{E}^{\bm{l}}(f,g)&:=\lim_{n\to\infty}\mathcal{E}^{\bm{l},n}(f|_{V^{\bm{l}}_{n}},g|_{V^{\bm{l}}_{n}})\in\mathbb{R},
	\quad f,g\in\mathcal{F}_{\bm{l}},
\label{e:sisg-DF-form}
\end{align}
where $\bigl\{\mathcal{E}^{\bm{l},n}(f|_{V^{\bm{l}}_{n}},f|_{V^{\bm{l}}_{n}})\bigr\}_{n=0}^{\infty}\subset[0,\infty)$
is non-decreasing by \eqref{e:sisg-form-compatible} and hence has a limit in $[0,\infty]$
for any $f\in\mathcal{C}(K^{\bm{l}})$. Then exactly the same arguments as in
\cite[Chapter 22]{Kig12} show that $(\mathcal{E}^{\bm{l}},\mathcal{F}_{\bm{l}})$ is a local
regular resistance form on $K^{\bm{l}}$ in the sense of \cite[Chapters 3, 6 and 7]{Kig12}
with its resistance metric giving the same topology as $d_{\bm{l}}$, and is thereby
a strongly local, regular symmetric Dirichlet form on $L^{2}(K^{\bm{l}},m_{\bm{l}})$
by \cite[Theorem 9.4]{Kig12}.

For the present MMD space
$(K^{\bm{l}},d_{\bm{l}},m_{\bm{l}},\mathcal{E}^{\bm{l}},\mathcal{F}_{\bm{l}})$,
it turns out that the right choice of a space-time scale function $\Psi$
is the homeomorphism $\Psi_{\bm{l}}:[0,\infty)\to[0,\infty)$ defined by
\begin{equation} \label{e:sisg-Psi}
\Psi_{\bm{l}}(s):=
	\begin{cases}
		\dfrac{(L^{\bm{l}}_{n}s)^{\beta_{l_{n}}}}{T^{\bm{l}}_{n}} & \textrm{if $n\in\mathbb{N}$ and $s\in[(L^{\bm{l}}_{n})^{-1},(L^{\bm{l}}_{n-1})^{-1}]$,} \\
		s^{\beta^{\min}_{\bm{l}}} & \textrm{if $s\in[1,\infty)$,}
	\end{cases}
\end{equation}
where $\beta_{l}:=\log_{l}(\# S_{l}/r_{l})$ for $l\in\mathbb{N}\setminus\{1\}$,
$\beta^{\min}_{\bm{l}}:=\min_{n\in\mathbb{N}}\beta_{l_{n}}$,
$L^{\bm{l}}_{0}:=T^{\bm{l}}_{0}:=1$, $L^{\bm{l}}_{n}:=l_{1}\cdots l_{n}$ and
$T^{\bm{l}}_{n}:=M^{\bm{l}}_{n}/R^{\bm{l}}_{n}$ for $n\in\mathbb{N}$, so that
$\beta_{l}\in(1,\infty)$ for any $l\in\mathbb{N}\setminus\{1\}$ by $\# S_{l}\geq l+1$ and $r_{l}<1$
and hence also $\beta^{\min}_{\bm{l}}\in(1,\infty)$ by $\sup_{n\in\mathbb{N}}l_{n}<\infty$.
It is immediate from \eqref{e:sisg-Psi} and $\sup_{n\in\mathbb{N}}l_{n}<\infty$ that
$\Psi_{\bm{l}}$ satisfies Assumption \ref{a:reg} with $\beta^{\min}_{\bm{l}}$
and $\beta^{\max}_{\bm{l}}:=\max_{n\in\mathbb{N}}\beta_{l_{n}}$ in place of
$\beta_{0}$ and $\beta_{1}$, respectively. In particular, if $l\in\mathbb{N}\setminus\{1\}$
and $\bm{l}$ is the constant sequence $\bm{l}_{l}=(l)_{n=1}^{\infty}$ with value $l$,
then $\Psi_{\bm{l}_{l}}(s)=s^{\beta_{l}}$ for any $s\in[0,\infty)$.

The following result is essentially a special case of \cite[Theorem 4.5 and Lemma 5.3]{BH},
and it is concluded from \cite[Theorem 15.10]{Kig12} by proving the conditions
$\on{(DM1)}_{\Psi_{\bm{l}},d_{\bm{l}}}$ and $\on{(DM2)}_{\Psi_{\bm{l}},d_{\bm{l}}}$
defined in \cite[Definition 15.9-(3),(4)]{Kig12},
which can be achieved in exactly the same way as \cite[Chapter 24]{Kig12}.

\begin{theorem}\label{t:sisg-HKE}
$(K^{\bm{l}},d_{\bm{l}},m_{\bm{l}},\mathcal{E}^{\bm{l}},\mathcal{F}_{\bm{l}})$
satisfies \hyperlink{vd}{$\on{VD}$} and \hyperlink{hke}{$\on{HKE}(\Psi_{\bm{l}})$}.
\end{theorem}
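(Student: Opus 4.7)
The plan is to apply \cite[Theorem 15.10]{Kig12}, which characterizes \hyperlink{vd}{$\on{VD}$} and \hyperlink{hke}{$\on{HKE}(\Psi)$} in the setting of local regular resistance forms by the two measure-theoretic conditions $\on{(DM1)}_{\Psi_{\bm{l}},d_{\bm{l}}}$ and $\on{(DM2)}_{\Psi_{\bm{l}},d_{\bm{l}}}$. The bulk of the work is to verify that these two conditions hold for the chosen scale function $\Psi_{\bm{l}}$ and the geodesic metric $d_{\bm{l}}$, by matching them up, at each scale $n$, with the corresponding estimates for the exactly self-similar level-$l_{n}$ Sierpi\'nski gasket.

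First I would establish \hyperlink{vd}{$\on{VD}$} directly. Given $(x,r)\in K^{\bm{l}}\times(0,1]$, choose $n\in\mathbb{N}$ with $(L^{\bm{l}}_{n})^{-1}\leq r<(L^{\bm{l}}_{n-1})^{-1}$. The geodesic-metric ball $B(x,r)$ then contains some $n$-cell $F^{\bm{l}}_{w}(K^{\bm{l}})$ and is contained in the union of a uniformly bounded number (depending only on $\sup_{n}l_{n}$ and $N$) of $n$-cells. Since each $n$-cell has $m_{\bm{l}}$-measure $(M^{\bm{l}}_{n})^{-1}$ by \eqref{e:sisg-measure}, this yields $m_{\bm{l}}(B(x,r))\asymp(M^{\bm{l}}_{n})^{-1}$ with implicit constants independent of $(x,r,n)$. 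Doubling the radius changes $n$ by at most a bounded amount (again by $\sup_{n}l_{n}<\infty$), so $m_{\bm{l}}(B(x,2r))\lesssim m_{\bm{l}}(B(x,r))$, proving \hyperlink{vd}{$\on{VD}$}.

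Next, for \hyperlink{hke}{$\on{HKE}(\Psi_{\bm{l}})$} I would verify $\on{(DM1)}_{\Psi_{\bm{l}},d_{\bm{l}}}$ and $\on{(DM2)}_{\Psi_{\bm{l}},d_{\bm{l}}}$ of \cite[Definition 15.9]{Kig12}. The key numerical matching is the identity
\begin{equation*}
\Psi_{\bm{l}}\bigl((L^{\bm{l}}_{n})^{-1}\bigr) = \frac{1}{T^{\bm{l}}_{n}} = \frac{R^{\bm{l}}_{n}}{M^{\bm{l}}_{n}},
\end{equation*}
which says that at scale $n$, the product of $\Psi_{\bm{l}}$ of the cell diameter and the cell measure equals the effective resistance $R^{\bm{l}}_{n}$ of the cell (read off from the renormalization constants in \eqref{e:sisg-rl}--\eqref{e:sisg-form-ln}). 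Since the metric and measure restricted to a single $n$-cell $F^{\bm{l}}_{w}(K^{\bm{l}})$ look, after rescaling by $L^{\bm{l}}_{n}$ in space, $M^{\bm{l}}_{n}$ in mass and $R^{\bm{l}}_{n}$ in energy, like the first-level ``parent'' of the exactly self-similar gasket $K^{l_{n+1}}$, the arguments of \cite[Chapter 24]{Kig12} for $\on{(DM1)}$, $\on{(DM2)}$ in the exactly self-similar case apply scale by scale. The bound $\sup_{n}l_{n}<\infty$ ensures that all implicit constants from these one-step estimates can be taken uniform in the scale, which is what is needed to glue the per-scale estimates into the global conditions $\on{(DM1)}_{\Psi_{\bm{l}},d_{\bm{l}}}$ and $\on{(DM2)}_{\Psi_{\bm{l}},d_{\bm{l}}}$.

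The main obstacle is the scale-by-scale uniformity: the constants appearing in the verification of $\on{(DM1)}$ and $\on{(DM2)}$ for the self-similar gasket $K^{l}$ depend on $l$, and one must check that over the finite range $l\in\{l_{n}\mid n\in\mathbb{N}\}\subset\mathbb{N}\cap[2,\sup_{n}l_{n}]$ they can be bounded uniformly, and that the definition \eqref{e:sisg-Psi} of $\Psi_{\bm{l}}$ correctly interpolates the exponents $\beta_{l_{n}}$ across scales so that Assumption \ref{a:reg} and the inductive estimates remain compatible at the transition radii $r=(L^{\bm{l}}_{n})^{-1}$. Once these bookkeeping issues are handled (as is done in essence in \cite[Theorem 4.5 and Lemma 5.3]{BH}), an application of \cite[Theorem 15.10]{Kig12} concludes the proof.
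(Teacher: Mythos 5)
Your proposal is correct and takes essentially the same approach as the paper: cite \cite[Theorem 15.10]{Kig12}, verify $\on{(DM1)}_{\Psi_{\bm{l}},d_{\bm{l}}}$ and $\on{(DM2)}_{\Psi_{\bm{l}},d_{\bm{l}}}$ scale by scale as in \cite[Chapter 24]{Kig12}, using $\sup_n l_n<\infty$ to keep constants uniform, and note the overlap with \cite[Theorem 4.5, Lemma 5.3]{BH}. One small slip in your narration: your displayed identity $\Psi_{\bm{l}}((L^{\bm{l}}_{n})^{-1})=R^{\bm{l}}_{n}/M^{\bm{l}}_{n}$ says that $\Psi_{\bm{l}}$ of the cell diameter \emph{divided by} the cell measure $1/M^{\bm{l}}_{n}$ equals the resistance $R^{\bm{l}}_{n}$ (equivalently $\Psi\asymp R\cdot m$), not that the \emph{product} of $\Psi_{\bm{l}}$ and the cell measure equals $R^{\bm{l}}_{n}$.
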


\begin{corollary}\label{c:sisg-VDPICS}
$(K^{\bm{l}},d_{\bm{l}},m_{\bm{l}},\mathcal{E}^{\bm{l}},\mathcal{F}_{\bm{l}})$
satisfies \hyperlink{vd}{$\on{VD}$}, \hyperlink{pi}{$\on{PI}(\Psi_{\bm{l}})$}
and \hyperlink{cs}{$\on{CS}(\Psi_{\bm{l}})$}.
\end{corollary}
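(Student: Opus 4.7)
The proof should be essentially immediate from the tools already in hand. My plan is to combine Theorem \ref{t:sisg-HKE} with Theorem \ref{t:HKE-VDPICS}. Specifically, Theorem \ref{t:sisg-HKE} hands us \hyperlink{vd}{$\on{VD}$} and \hyperlink{hke}{$\on{HKE}(\Psi_{\bm{l}})$} for the MMD space $(K^{\bm{l}},d_{\bm{l}},m_{\bm{l}},\mathcal{E}^{\bm{l}},\mathcal{F}_{\bm{l}})$. Since $\Psi_{\bm{l}}$ has already been verified (in the paragraph following \eqref{e:sisg-Psi}) to satisfy Assumption \ref{a:reg} with exponents $\beta^{\min}_{\bm{l}},\beta^{\max}_{\bm{l}}\in(1,\infty)$, the hypothesis of Theorem \ref{t:HKE-VDPICS} is met. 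Invoking that theorem then delivers \hyperlink{pi}{$\on{PI}(\Psi_{\bm{l}})$} and \hyperlink{cs}{$\on{CS}(\Psi_{\bm{l}})$} (as a bonus, connectedness of $(K^{\bm{l}},d_{\bm{l}})$ as well, which is anyway evident since $d_{\bm{l}}$ is a geodesic metric).

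There is no real obstacle here: all of the substance has been absorbed into Theorem \ref{t:sisg-HKE}, whose proof is the one referring to \cite{BH,Kig12}. The only thing to double-check is that the regularity exponents for $\Psi_{\bm{l}}$ are finite and uniform in $n$, which follows from $\sup_{n}l_{n}<\infty$ and from $\beta_{l}\in(1,\infty)$ for each $l\in\mathbb{N}\setminus\{1\}$, and this has already been observed in the text. So the proof is a one-line citation:

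\begin{proof}[Proof of Corollary \textup{\ref{c:sisg-VDPICS}}]
By Theorem \ref{t:sisg-HKE}, $(K^{\bm{l}},d_{\bm{l}},m_{\bm{l}},\mathcal{E}^{\bm{l}},\mathcal{F}_{\bm{l}})$ satisfies \hyperlink{vd}{$\on{VD}$} and \hyperlink{hke}{$\on{HKE}(\Psi_{\bm{l}})$}, where $\Psi_{\bm{l}}$ satisfies Assumption \ref{a:reg} as noted after \eqref{e:sisg-Psi}. Theorem \ref{t:HKE-VDPICS} then yields \hyperlink{pi}{$\on{PI}(\Psi_{\bm{l}})$} and \hyperlink{cs}{$\on{CS}(\Psi_{\bm{l}})$}.
\end{proof}
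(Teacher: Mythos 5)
Your proof is correct and takes exactly the same route as the paper, which likewise cites Assumption \ref{a:reg} for $\Psi_{\bm{l}}$ together with Theorems \ref{t:sisg-HKE} and \ref{t:HKE-VDPICS}. Nothing to add.
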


\begin{proof}
This is immediate from Assumption \ref{a:reg} for $\Psi_{\bm{l}}$,
Theorems \ref{t:sisg-HKE} and \ref{t:HKE-VDPICS}.
\end{proof}

Thus our present MMD space
$(K^{\bm{l}},d_{\bm{l}},m_{\bm{l}},\mathcal{E}^{\bm{l}},\mathcal{F}_{\bm{l}})$
will prove to fall into the situation of Theorem \ref{t:main}-(a) once $\Psi_{\bm{l}}$
has been shown to satisfy \eqref{e:case-sing}, which is indeed the case as stated in
Proposition \ref{p:sisg-case-sing} below. Note that this proposition is not
entirely obvious since it seems impossible to calculate the values of $r_{l}$
and $\beta_{l}$ explicitly for general $l\in\mathbb{N}\setminus\{1\}$.

\begin{proposition}\label{p:sisg-case-sing}
$\beta_{l}>2$ for any $l\in\mathbb{N}\setminus\{1\}$. In particular,
$\beta^{\min}_{\bm{l}}>2$ and $\Psi_{\bm{l}}$ satisfies \eqref{e:case-sing}.
\end{proposition}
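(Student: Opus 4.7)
The plan is to prove $\beta_l>2$ for every $l\in\mathbb{N}\setminus\{1\}$ via a direct variational upper bound on $r_l$; the remainder of the statement will then follow immediately. Since $\beta_l=\log_l(\#S_l/r_l)$, the claim $\beta_l>2$ is equivalent to $r_l<\#S_l/l^2$, and this strict bound is what I would extract from the minimization characterization \eqref{e:sisg-rl}.

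For the non-strict estimate $r_l\le\#S_l/l^2$, I take the test function $f\in\mathbb{R}^{V_0}$ with $f(q_0)=1$ and $f(q_k)=0$ for $k\in\{1,\ldots,N\}$, so that $\mathcal{E}^0(f,f)=N$, and use as extension the \emph{barycentric} map $L_f\in\mathbb{R}^{V^l_1}$ defined by $L_f(v):=j_0/l$, where $(j_0,\ldots,j_N)$ are the barycentric coordinates of $v$ in $\triangle$ with respect to $(q_0,\ldots,q_N)$ scaled so that $\sum_{k=0}^N j_k=l$. Reading off $L_f$ on the vertices of a small simplex $F^l_i(\triangle)$ from the formulas $F^l_i(q_0)=q_0+\sum_k(i_k/l)(q_k-q_0)$ and $F^l_i(q_m)=F^l_i(q_0)+l^{-1}(q_m-q_0)$ for $m\geq 1$ shows that the $N$ values $L_f(F^l_i(q_m))$, $m\geq 1$, all coincide and differ from $L_f(F^l_i(q_0))$ by exactly $1/l$, so $\mathcal{E}^0(L_f\circ F^l_i|_{V_0},L_f\circ F^l_i|_{V_0})=N/l^2$ for every $i\in S_l$. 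Summing over $S_l$ and substituting into \eqref{e:sisg-rl} yields $r_l\cdot N\le N\cdot\#S_l/l^2$, i.e.\ $r_l\le\#S_l/l^2$.

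The main (and only non-routine) step is to upgrade this to $r_l<\#S_l/l^2$. The bilinear form $g\mapsto\sum_{i\in S_l}\mathcal{E}^0(g\circ F^l_i|_{V_0},g\circ F^l_i|_{V_0})$ on $\mathbb{R}^{V^l_1}$ is strictly convex on the affine subspace $\{g\mid g|_{V_0}=f\}$ (its unique minimizer being the graph-harmonic extension $H_f$), so it suffices to show that $L_f$ is not a critical point, equivalently that $L_f$ fails to be graph-harmonic at some interior vertex. I would test this condition at $v:=F^l_{(l-1,0,\ldots,0)}(q_0)\in V^l_1\setminus V_0$, the point $(1/l)q_0+((l-1)/l)q_1$ on the edge from $q_0$ to $q_1$. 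An elementary case analysis shows that $v$ is a vertex of exactly two small simplices in $\{F^l_j(\triangle)\}_{j\in S_l}$, namely the ``top'' simplex indexed by $(l-1,0,\ldots,0)$ and the ``bottom'' simplex indexed by $(l-2,0,\ldots,0)$ (with $v=F^l_{(l-2,0,\ldots,0)}(q_1)$), contributing $2N$ distinct graph neighbours of $v$. Computing $L_f$ at each of these gives the neighbour sum $(N+1)/l$ (from one neighbour with value $2/l$, $N-1$ neighbours with value $1/l$, and $N$ neighbours with value $0$), whereas the critical-point condition reads $2N\cdot L_f(v)=\sum_{\text{neighbours }w}L_f(w)$, i.e.\ $2N=N+1$, which is false for $N\geq 2$. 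Hence $L_f\neq H_f$, giving strict inequality and $\beta_l>2$.

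The remaining conclusions are immediate. Since $\sup_n l_n<\infty$, the sequence $\{l_n\}_{n\in\mathbb{N}}$ takes values in a finite subset of $\mathbb{N}\setminus\{1\}$, so $\beta^{\min}_{\bm{l}}=\min_n\beta_{l_n}>2$ is attained. Applying the lower inequality in Assumption \ref{a:reg} for $\Psi_{\bm{l}}$ (valid with $\beta_0=\beta^{\min}_{\bm{l}}$) to the pair $(R,r)=(r,r/\lambda)$ gives $\Psi_{\bm{l}}(r/\lambda)/\Psi_{\bm{l}}(r)\leq C_{\Psi}\lambda^{-\beta^{\min}_{\bm{l}}}$ for all $\lambda\geq 1$ and $r>0$, whence
\begin{equation*}
\lambda^{2}\Psi_{\bm{l}}(r/\lambda)/\Psi_{\bm{l}}(r)\leq C_{\Psi}\lambda^{2-\beta^{\min}_{\bm{l}}}\xrightarrow{\lambda\to\infty}0,
\end{equation*}
which is considerably stronger than \eqref{e:case-sing}. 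The only delicate point in the whole argument is the critical-point check at $v$, which reduces to verifying the single identity $2N=N+1$.
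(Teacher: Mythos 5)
Your proposal is correct and coincides with the alternative elementary proof given in the second paragraph of the paper's proof of Proposition \ref{p:sisg-case-sing}: you use the barycentric/affine test function (your $L_{f}$ equals $\one_{\triangle}-f$ for the affine $f$ used by the paper, which does not affect the energy), compute that its restriction to $V^{l}_{1}$ contributes exactly $(\#S_{l}/l^{2})\mathcal{E}^{0}(f|_{V_{0}},f|_{V_{0}})$ to the left-hand side of \eqref{e:sisg-rl}, and then argue that this restriction is not the minimizer. You go further than the paper in explicitly verifying the failure of graph-harmonicity at the vertex $v=F^{l}_{(l-1,0,\ldots,0)}(q_{0})$ (the check $2N\neq N+1$, valid precisely because $N\geq 2$), where the paper leaves this step as ``easily seen''. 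Be aware that the paper also offers a first, non-elementary proof of $\beta_{l}>2$, proceeding by contradiction from \cite[Theorem 2]{HN} together with Theorem \ref{t:main}-(b); you took the more self-contained route, which is fine.
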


\begin{proof}
Let $l\in\mathbb{N}\setminus\{1\}$, consider the case where $\bm{l}$ is the constant
sequence $\bm{l}_{l}=(l)_{n=1}^{\infty}$ with value $l$, i.e., that of the $N$-dimensional
level-$l$ Sierpi\'{n}ski gasket $K^{l}$, set $d_{l}:=d_{\bm{l}_{l}}$, $m_{l}:=m_{\bm{l}_{l}}$
and $(\mathcal{E}^{l},\mathcal{F}_{l}):=(\mathcal{E}^{\bm{l}_{l}},\mathcal{F}_{\bm{l}_{l}})$
and let $\Gamma_{l}(f,f)$ denote the energy measure of $f\in\mathcal{F}_{l}$
associated with $(K^{l},d_{l},m_{l},\mathcal{E}^{l},\mathcal{F}_{l})$.
Then by \cite[Theorem 2]{HN} we have $\Gamma_{l}(f,f) \perp m_{l}$
for all $f\in\mathcal{F}_{l}$, which together with Corollary \ref{c:sisg-VDPICS}
for $\bm{l}=\bm{l}_{l}$ and Theorem \ref{t:main}-(b) implies that
$\limsup_{s\downarrow 0}s^{\beta_{l}-2}=\limsup_{s\downarrow 0}s^{-2}\Psi_{\bm{l}_{l}}(s)=0$
since only one of $\Gamma_{l}(f,f) \perp m_{l}$ and $\Gamma_{l}(f,f) \ll m_{l}$
can hold for each $f\in\mathcal{F}_{l}\setminus\mathbb{R}\one_{K^{l}}$
by $\Gamma_{l}(f,f)(K^{l})=\mathcal{E}^{l}(f,f)>0$. Thus $\beta_{l}>2$,
which in combination with $\sup_{n\in\mathbb{N}}l_{n}<\infty$ yields $\beta^{\min}_{\bm{l}}>2$.
Now \eqref{e:reg} for $\Psi_{\bm{l}}$ with $\beta^{\min}_{\bm{l}}>2$
in place of $\beta_{0}$ shows \eqref{e:case-sing} for $\Psi_{\bm{l}}$.

An alternative elementary proof of $\beta_{l}>2$, which is a slight modification of
that suggested by an anonymous referee, is based on the specific structure of $V^{l}_{1}$
and $\mathcal{E}^{0}$ and goes as follows. Define $f:\mathbb{R}^{N}\to\mathbb{R}$
by $f\bigl(q_{0}+\sum_{k=1}^{N}a_{k}(q_{k}-q_{0})\bigr):=\sum_{k=1}^{N}a_{k}$
for each $(a_{k})_{k=1}^{N}\in\mathbb{R}^{N}$. Then since 
$f\circ F^{l}_{i}=l^{-1}f+f(q^{l}_{i})\one_{\mathbb{R}^{N}}$ for any $i\in S_{l}$ and
$g:=f|_{V^{l}_{1}}$ is easily seen not to attain the minimum in the left-hand side of
\eqref{e:sisg-rl} with $f|_{V_{0}}$ in place of $f$, from \eqref{e:sisg-form-0} and
\eqref{e:sisg-rl} we obtain
\begin{equation*}
0<\mathcal{E}^{0}(f|_{V_{0}},f|_{V_{0}})
	<\frac{1}{r_{l}}\sum_{i\in S_{l}}\mathcal{E}^{0}\bigl(f\circ F^{l}_{i}|_{V_{0}},f\circ F^{l}_{i}|_{V_{0}}\bigr)
	=\frac{\# S_{l}}{r_{l}}l^{-2}\mathcal{E}^{0}(f|_{V_{0}},f|_{V_{0}}),
\end{equation*}
whence $\# S_{l}/r_{l}>l^{2}$ and $\beta_{l}=\log_{l}(\# S_{l}/r_{l})>2$.
\end{proof}

\begin{remark}\label{rmk:sisg-case-sing}
The alternative proof of $\beta_{l}>2$ in the second paragraph of the proof of
Proposition \ref{p:sisg-case-sing} above can be adapted to give an elementary
proof of the counterpart of $\beta_{l}>2$ for the canonical Dirichlet form
on Sierpi\'{n}ski carpets; see \cite{Kaj20a} for details.
\end{remark}

Finally, applying Theorem \ref{t:main}-(a) to
$(K^{\bm{l}},d_{\bm{l}},m_{\bm{l}},\mathcal{E}^{\bm{l}},\mathcal{F}_{\bm{l}})$
on the basis of Corollary \ref{c:sisg-VDPICS} and Proposition \ref{p:sisg-case-sing},
we arrive at the following result.

\begin{theorem}\label{t:sisg-sing}
Let $\Gamma_{\bm{l}}(f,f)$ denote the energy measure of
$f \in \mathcal{F}_{\bm{l}}$ associated with the MMD space
$(K^{\bm{l}},d_{\bm{l}},m_{\bm{l}},\mathcal{E}^{\bm{l}},\mathcal{F}_{\bm{l}})$.
Then $\Gamma_{\bm{l}}(f,f) \perp m_{\bm{l}}$ for all $f \in \mathcal{F}_{\bm{l}}$.
\end{theorem}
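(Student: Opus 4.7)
The plan is to deduce this theorem as a direct application of Theorem \ref{t:main}-(a) to the MMD space $(K^{\bm{l}},d_{\bm{l}},m_{\bm{l}},\mathcal{E}^{\bm{l}},\mathcal{F}_{\bm{l}})$, since essentially all of the required hypotheses have already been assembled in the preceding discussion of this section. Concretely, I would verify the three ingredients of Theorem \ref{t:main}-(a) as follows.

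First, the conjunction of \hyperlink{vd}{$\on{VD}$}, \hyperlink{pi}{$\on{PI}(\Psi_{\bm{l}})$} and \hyperlink{cs}{$\on{CS}(\Psi_{\bm{l}})$} for $(K^{\bm{l}},d_{\bm{l}},m_{\bm{l}},\mathcal{E}^{\bm{l}},\mathcal{F}_{\bm{l}})$ is precisely the content of Corollary \ref{c:sisg-VDPICS}, which in turn rests on Theorem \ref{t:sisg-HKE} (the sub-Gaussian heat kernel estimates for scale irregular Sierpi\'nski gaskets) combined with Theorem \ref{t:HKE-VDPICS}. Second, the chain condition for $(K^{\bm{l}},d_{\bm{l}})$ is immediate: by construction $d_{\bm{l}}$ defined in \eqref{e:sisg-metric} is a geodesic metric on $K^{\bm{l}}$, so for any $\varepsilon>0$ and any $x,y\in K^{\bm{l}}$, a minimizing geodesic from $x$ to $y$ can be partitioned into finitely many sub-arcs of $d_{\bm{l}}$-length less than $\varepsilon$ whose endpoints furnish an $\varepsilon$-chain with total length equal to $d_{\bm{l}}(x,y)$, so that \eqref{e:chain} holds with $C=1$.

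Third, I would check that $\Psi_{\bm{l}}$ satisfies \eqref{e:case-sing}. Proposition \ref{p:sisg-case-sing} gives $\beta^{\min}_{\bm{l}}>2$, and then the lower inequality in Assumption \ref{a:reg} for $\Psi_{\bm{l}}$ (with exponent $\beta^{\min}_{\bm{l}}$ in place of $\beta_{0}$) yields
\begin{equation*}
\frac{\lambda^{2}\Psi_{\bm{l}}(r/\lambda)}{\Psi_{\bm{l}}(r)} \leq C_{\Psi_{\bm{l}}} \lambda^{2-\beta^{\min}_{\bm{l}}} \xrightarrow{\lambda\to\infty} 0
\end{equation*}
uniformly in $r\in(0,\infty)$, which in particular implies \eqref{e:case-sing}.

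With these three ingredients verified, Theorem \ref{t:main}-(a) applies and yields $\Gamma_{\bm{l}}(f,f) \perp m_{\bm{l}}$ for all $f\in\mathcal{F}_{\bm{l}}$, which is exactly the desired conclusion. The main conceptual obstacle has in fact already been settled in Proposition \ref{p:sisg-case-sing}, where the strict inequality $\beta_{l}>2$ is obtained either via the self-similar case together with the dichotomy provided by Theorem \ref{t:main}-(b) (using the singularity result of \cite{HN} for the self-similar gaskets) or via the elementary variational argument based on the affine coordinate function $f$. Everything else in the argument above is a bookkeeping verification of the hypotheses of Theorem \ref{t:main}-(a), so no further substantive work is needed.
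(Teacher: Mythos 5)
Your proposal is correct and takes essentially the same route as the paper: Theorem \ref{t:sisg-sing} is obtained by applying Theorem \ref{t:main}-(a) on the basis of Corollary \ref{c:sisg-VDPICS} and Proposition \ref{p:sisg-case-sing}, with the chain condition for $(K^{\bm{l}},d_{\bm{l}})$ following trivially from the fact that $d_{\bm{l}}$ is geodesic. The verification of \eqref{e:case-sing} you spell out is in fact already carried out in the proof of Proposition \ref{p:sisg-case-sing}, so your argument is just a slightly more explicit version of the paper's.
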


%
%

\begin{appendix}

\section{Miscellaneous facts}\label{sec:appendix}

In this appendix, we state and prove a couple of miscellaneous facts utilized in the proof
of Theorem \ref{t:main}-(a). The former (Proposition \ref{p:cc}) achieves the equivalence
between the chain condition and the bi-Lipschitz equivalence to a geodesic metric and
allows us to reduce the proof to the case where the metric is geodesic.
The latter (Proposition \ref{p:rd}) is a straightforward extension, to a general
metric measure space satisfying \hyperlink{vd}{$\on{VD}$}, of the classical Lebesgue
differentiation theorem \cite[Theorem 7.13]{Rud} for singular measures on the Euclidean space,
and here we give a complete proof of it for the reader's convenience.

\subsection{Chain condition and bi-Lipschitz equivalence to a geodesic metric} \label{ssec:chain-geodesic}

\begin{proposition} \label{p:cc}
Let $(X,d)$ be a metric space such that $B(x,r):=\{y\in X\mid d(x,y)<r\}$ is relatively
compact in $X$ for any $(x,r)\in X\times(0,\infty)$. Then the following are equivalent:
\begin{itemize}[label=\textup{(b)},align=left,leftmargin=*]
\item[\textup{(a)}]$(X,d)$ satisfies the chain condition.
\item[\textup{(b)}]There exists a geodesic metric $\rho$ on $X$ which is bi-Lipschitz equivalent
	to $d$, i.e., satisfies $C^{-1} d(x,y) \le \rho(x,y) \le C d(x,y)$
	for any $x,y \in X$ for some $C\in[1,\infty)$.
\end{itemize}
\end{proposition}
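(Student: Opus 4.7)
The direction (b) $\Rightarrow$ (a) is immediate and would go as follows: given a geodesic metric $\rho$ with $C^{-1}d \leq \rho \leq Cd$, for any $x, y \in X$ and $\varepsilon > 0$, I would sample a $\rho$-geodesic from $x$ to $y$ at $N+1$ equally spaced parameters with $N > C^{2}d(x,y)/\varepsilon$, obtaining an $\varepsilon$-chain in $(X,d)$ whose $d$-sum is bounded by $C^{2}d(x,y)$, which establishes the chain condition with constant $C^{2}$.

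For (a) $\Rightarrow$ (b), the plan is to define $\rho(x,y) := \sup_{\varepsilon>0} d_{\varepsilon}(x,y) = \lim_{\varepsilon \downarrow 0} d_{\varepsilon}(x,y)$, the limit existing since $d_{\varepsilon}(x,y)$ is non-increasing in $\varepsilon$ (smaller $\varepsilon$ allows fewer chains). The estimates $d(x,y) \leq d_{\varepsilon}(x,y) \leq Cd(x,y)$, from the triangle inequality on any $\varepsilon$-chain and the chain condition respectively, yield $d \leq \rho \leq Cd$; and concatenation of $\varepsilon$-chains combined with $\varepsilon \downarrow 0$ gives the triangle inequality for $\rho$, so $\rho$ is a metric on $X$ bi-Lipschitz equivalent to $d$. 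Moreover $(X,d)$ is automatically complete under the given relative compactness of balls, since any $d$-Cauchy sequence is bounded and hence subsequentially convergent.

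To exhibit a $\rho$-geodesic between arbitrary $x, y \in X$, for each $n \in \mathbb{N}$ I would pick a $2^{-n}$-chain $\{x_i^{(n)}\}_{i=0}^{N_n}$ from $x$ to $y$ with $d$-sum $\sigma_n < d_{2^{-n}}(x,y) + 2^{-n}$, so that $\sigma_n \to \rho(x,y)$. Setting $t_i^{(n)} := \sigma_n^{-1}\sum_{j<i} d(x_j^{(n)}, x_{j+1}^{(n)})$ and defining $\gamma_n : [0,1] \to X$ by $\gamma_n(t) := x_i^{(n)}$ for $t \in [t_i^{(n)}, t_{i+1}^{(n)})$ (with $\gamma_n(1) := y$) produces piecewise-constant maps with $\gamma_n(0) = x$, $\gamma_n(1) = y$, all values lying in the compact set $\overline{B(x, 2\rho(x,y))}$ for $n$ large, and the approximate Lipschitz estimate $d(\gamma_n(s), \gamma_n(t)) \leq \sigma_n|t-s| + 2^{-n}$ for all $s,t \in [0,1]$. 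A diagonal extraction along a countable dense subset $D \subset [0,1]$ containing $\{0,1\}$ then yields a subsequence converging pointwise on $D$ to a $\rho(x,y)$-Lipschitz map, which by the completeness of $(X,d)$ extends to a $\rho(x,y)$-Lipschitz continuous curve $\gamma : [0,1] \to X$ from $x$ to $y$.

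To conclude that $\gamma$ is a $\rho$-geodesic, I would denote by $L_d(\gamma) := \sup \sum_{i} d(\gamma(t_i), \gamma(t_{i+1}))$ its $d$-length, the supremum taken over finite partitions of $[0,1]$. The Lipschitz bound gives $L_d(\gamma) \leq \rho(x,y)$, while uniform subdivisions of $\gamma$ furnish arbitrarily fine $\varepsilon$-chains from $x$ to $y$ with $d$-sum at most $L_d(\gamma)$, yielding $\rho(x,y) \leq L_d(\gamma)$, so $L_d(\gamma) = \rho(x,y)$. The same subdivision argument applied to $\gamma|_{[s,t]}$ then gives $\rho(\gamma(s), \gamma(t)) \leq L_d(\gamma|_{[s,t]}) \leq \rho(x,y)(t-s)$ for $0 \leq s \leq t \leq 1$, and combining this with the $\rho$-triangle inequality $\rho(x,y) \leq \rho(x, \gamma(s)) + \rho(\gamma(s), \gamma(t)) + \rho(\gamma(t), y)$ forces equality throughout and hence $\rho(\gamma(s), \gamma(t)) = \rho(x,y)(t-s)$, exhibiting $\gamma$ as a constant-speed $\rho$-geodesic. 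The main technical subtlety is the extraction step, since the $\gamma_n$ are only approximately equicontinuous with vanishing jump $2^{-n}$ and are not literally continuous; this is bypassed by the pointwise diagonal extraction on the countable dense set $D$ followed by continuous extension using completeness, rather than by invoking Arzel\`a--Ascoli directly.
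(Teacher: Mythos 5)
Your proof is correct. For $\textup{(b)}\Rightarrow\textup{(a)}$, sampling a $\rho$-geodesic at equally spaced parameters is essentially the paper's argument made concrete; both yield the chain condition with constant $C^{2}$.

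For $\textup{(a)}\Rightarrow\textup{(b)}$, you define $\rho=\lim_{\varepsilon\downarrow 0}d_{\varepsilon}$ exactly as the paper does and establish the same basic estimates $d\leq\rho\leq Cd$ and completeness. Where your route genuinely diverges is in proving $(X,\rho)$ geodesic. The paper reduces to the existence of \emph{midpoints} via the criterion in \cite[Proof of Theorem 2.4.16]{BBI}, manufacturing approximate midpoints from near-optimal $\varepsilon$-chains in Lemma~\ref{l:mp} and then extracting a convergent subsequence. You instead construct an explicit $\rho$-geodesic $\gamma:[0,1]\to X$ directly: you parametrize near-optimal $2^{-n}$-chains by cumulative $d$-length, observe the approximate equicontinuity bound $d(\gamma_{n}(s),\gamma_{n}(t))\leq\sigma_{n}|t-s|+2^{-n}$, and bypass the fact that the $\gamma_{n}$ are not genuinely continuous by extracting pointwise convergence on a countable dense set $D\ni 0,1$ and then extending the resulting $\rho(x,y)$-Lipschitz map on $D$ to $[0,1]$ by completeness; the identity $\rho(\gamma(s),\gamma(t))=\rho(x,y)(t-s)$ then drops out from the length estimate $\rho(\gamma(s),\gamma(t))\leq L_{d}(\gamma|_{[s,t]})\leq\rho(x,y)(t-s)$ sandwiched against the $\rho$-triangle inequality. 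Your approach is self-contained (it avoids the cited result on midpoints and geodesics in complete spaces) at the cost of a somewhat longer compactness argument, whereas the paper's midpoint route leans on the standard metric-geometry citation to keep the extraction step minimal --- a single convergent subsequence of approximate midpoints rather than a diagonal argument across a dense set. Both are valid; yours is the more elementary route in terms of prerequisites, the paper's is the shorter one on the page.
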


We need the following definition and lemma for the proof of Proposition \ref{p:cc}.

\begin{definition} \label{d:mp}
Let $(X,d)$ be a metric space and let $x,y\in X$. We say that $z\in X$ is a
\emph{midpoint} in $(X,d)$ between $x,y$ if $d(x,z)=d(y,z)=d(x,y)/2$.
\end{definition}

\begin{lemma} \label{l:mp}
Let $(X,d)$ be a metric space. If $\varepsilon>0$ and $x,y \in X$ satisfy
$d_{\varepsilon}(x,y)<\infty$, then there exists $z \in X$ such that
$|2d_{\varepsilon}(x,z)-d_{\varepsilon}(x,y)| \leq 5\varepsilon$ and
$|2d_{\varepsilon}(y,z)-d_{\varepsilon}(x,y)| \leq 5\varepsilon$.
\end{lemma}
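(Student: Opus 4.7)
The plan is to produce $z$ as an approximate midpoint along a nearly length-optimal $\varepsilon$-chain from $x$ to $y$. The first preliminary observation I would record is the subadditivity $d_{\varepsilon}(x,y) \leq d_{\varepsilon}(x,z) + d_{\varepsilon}(z,y)$ for every $z \in X$: the concatenation of an $\varepsilon$-chain from $x$ to $z$ with an $\varepsilon$-chain from $z$ to $y$ is again an $\varepsilon$-chain from $x$ to $y$ (the joining step has distance $0 < \varepsilon$), and the total Euclidean lengths add. Also the trivial case $d_{\varepsilon}(x,y)=0$ is handled by taking $z := x$, so we may assume $d_{\varepsilon}(x,y) > 0$.

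Next, given $\varepsilon > 0$ and $x,y$ with $d_{\varepsilon}(x,y) < \infty$, I would fix an $\varepsilon$-chain $\{x_i\}_{i=0}^{N}$ from $x$ to $y$ whose total length $s_N := \sum_{i=0}^{N-1} d(x_i,x_{i+1})$ satisfies $s_N \leq d_{\varepsilon}(x,y) + \varepsilon$. Set $s_j := \sum_{i=0}^{j-1} d(x_i,x_{i+1})$ for $0 \leq j \leq N$. Since $s_0 = 0$ and $s_N \geq d_{\varepsilon}(x,y) > d_{\varepsilon}(x,y)/2$, the index
\begin{equation*}
k := \max\bigl\{ j \in \{0,1,\ldots,N\} \bigm| s_j \leq d_{\varepsilon}(x,y)/2 \bigr\}
\end{equation*}
is well-defined and satisfies $0 \leq k < N$. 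I then define $z := x_k$. Since $\{x_i\}_{i=0}^{k}$ is an $\varepsilon$-chain from $x$ to $z$ of total length $s_k$, I get $d_{\varepsilon}(x,z) \leq s_k$; similarly $d_{\varepsilon}(z,y) \leq s_N - s_k$.

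The key point is then to translate these into two-sided estimates using the subadditivity above together with the choice of $k$. From $s_{k+1} > d_{\varepsilon}(x,y)/2$ and $s_{k+1} - s_k = d(x_k,x_{k+1}) < \varepsilon$, it follows that $s_k > d_{\varepsilon}(x,y)/2 - \varepsilon$. Combining this with $s_k \leq d_{\varepsilon}(x,y)/2$, with the upper bound $s_N \leq d_{\varepsilon}(x,y) + \varepsilon$ and with subadditivity $d_{\varepsilon}(x,z) + d_{\varepsilon}(z,y) \geq d_{\varepsilon}(x,y)$, a short calculation yields
\begin{equation*}
\bigl| 2 d_{\varepsilon}(x,z) - d_{\varepsilon}(x,y)\bigr| \leq 4\varepsilon \quad\text{and}\quad \bigl| 2 d_{\varepsilon}(z,y) - d_{\varepsilon}(x,y)\bigr| \leq 4\varepsilon,
\end{equation*}
which is even sharper than the $5\varepsilon$ claimed. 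No step looks difficult; the only subtlety worth checking carefully is that $s_N$ can be chosen within $\varepsilon$ of $d_{\varepsilon}(x,y)$ (immediate from the infimum definition) and that $k < N$ strictly so that the defining inequality for $k+1$ can be used — both of which follow from the reduction to $d_{\varepsilon}(x,y) > 0$.
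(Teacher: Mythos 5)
Your proposal is correct and follows the same basic strategy as the paper's proof: take a near-optimal $\varepsilon$-chain from $x$ to $y$ and locate an approximate midpoint among its vertices. The execution, however, differs in two small but pleasant ways. The paper selects $k$ as the smallest index at which the partial chain length first reaches half of the \emph{total chain length} $\sum_{i=0}^{n-1}d(x_i,x_{i+1})$, then derives two-sided estimates via the sandwich inequality $\sum d(x_i,x_{i+1})-\varepsilon\le d_\varepsilon(x,y)\le d_\varepsilon(x,z)+d_\varepsilon(y,z)\le\sum d(x_i,x_{i+1})$ together with the observation that both gaps $d_\varepsilon(x,z)-\sum_{i<k}d(x_i,x_{i+1})$ and $d_\varepsilon(y,z)-\sum_{i\ge k}d(x_i,x_{i+1})$ are non-positive. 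You instead take $k$ as the largest index with $s_k\le d_\varepsilon(x,y)/2$, bound $d_\varepsilon(x,z)$ and $d_\varepsilon(z,y)$ from above directly by the sub-chain lengths, and recover the matching lower bounds from one application of subadditivity. This is a bit more streamlined and gives the constant $4\varepsilon$ rather than $5\varepsilon$; either constant serves the lemma's purpose. One point worth stating explicitly in your write-up: if $k=0$ the ``sub-chain'' $\{x_0,\dots,x_k\}$ is degenerate and does not literally satisfy the definition of an $\varepsilon$-chain (which requires $N\ge 1$), but in that case $z=x$ and $d_\varepsilon(x,z)=0=s_0$ holds anyway, so the bound $d_\varepsilon(x,z)\le s_k$ is still valid.
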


\begin{proof}
By the definition \eqref{e:depsilon} of $d_{\varepsilon}(x,y)$ and the assumption
$d_{\varepsilon}(x,y)<\infty$ we can take an $\varepsilon$-chain $\{x_{i}\}_{i=0}^{n}$
in $(X,d)$ from $x$ to $y$ such that
\begin{equation} \label{e:mp1}
\sum_{i=0}^{n-1}d(x_{i},x_{i+1}) \geq d_{\varepsilon}(x,y)
	\geq \sum_{i=0}^{n-1}d(x_{i},x_{i+1}) - \varepsilon.
\end{equation}
Let $k\in\{1,\ldots,n\}$ be the smallest integer such that
\begin{equation}\label{e:mp2}
\sum_{i=0}^{k-1}d(x_{i},x_{i+1}) \geq \frac{1}{2}\sum_{i=0}^{n-1}d(x_{i},x_{i+1}).
\end{equation}
We claim that $z:=x_{k}$ satisfies the desired inequalities.
Indeed, by $d(x_{k-1},x_{k})<\varepsilon$ and the minimality of $k$
among the elements of $\{1,\ldots,n\}$ with the property \eqref{e:mp2}, we have
\begin{equation} \label{e:mp3}
\sum_{i=0}^{k-1}d(x_{i},x_{i+1}) \geq \frac{1}{2}\sum_{i=0}^{n-1}d(x_{i},x_{i+1})
	> \sum_{i=0}^{k-1}d(x_{i},x_{i+1}) -\varepsilon
\end{equation}
and
\begin{equation}\label{e:mp4}
\frac{1}{2}\sum_{i=0}^{n-1}d(x_{i},x_{i+1}) \geq \sum_{i=k}^{n-1}d(x_{i},x_{i+1})
	> \frac{1}{2}\sum_{i=0}^{n-1}d(x_{i},x_{i+1}) -\varepsilon.
\end{equation}
Noting that $d_{\varepsilon}$ satisfies the triangle inequality, we see from the lower
inequality in \eqref{e:mp1} and the definition \eqref{e:depsilon} of $d_{\varepsilon}$ that
\begin{equation*}
\sum_{i=0}^{n-1}d(x_{i},x_{i+1}) - \varepsilon
	\leq d_{\varepsilon}(x,y)\le d_{\varepsilon}(x,z)+d_{\varepsilon}(y,z)
	\leq \sum_{i=0}^{n-1}d(x_{i},x_{i+1}),
\end{equation*}
which yields
\begin{equation} \label{e:mp5}
-\varepsilon
	\leq \Biggl(d_{\varepsilon}(x,z)-\sum_{i=0}^{k-1} d(x_{i},x_{i+1}) \Biggr)
		+\Biggl(d_{\varepsilon}(y,z)-\sum_{i=k}^{n-1} d(x_{i},x_{i+1}) \Biggr) \leq 0.
\end{equation}
Since both of the terms in \eqref{e:mp5} are non-positive by \eqref{e:depsilon}, we obtain
\begin{equation} \label{e:mp6}
\Biggl|d_{\varepsilon}(x,z)-\sum_{i=0}^{k-1} d(x_{i},x_{i+1})\Biggr| \leq \varepsilon
	\qquad\textrm{and}\qquad
	\Biggl|d_{\varepsilon}(y,z)-\sum_{i=k}^{n-1} d(x_{i},x_{i+1})\Biggr| \leq \varepsilon.
\end{equation}
Now it follows from the triangle inequality, \eqref{e:mp6}, \eqref{e:mp3} and \eqref{e:mp1} that
\begin{align*}
\biggl|d_{\varepsilon}(x,z)- \frac{1}{2} d_{\varepsilon}(x,y)\biggr|
	&\leq \Biggl|d_{\varepsilon}(x,z)\mspace{-0.4mu}-\mspace{-0.4mu}\sum_{i=0}^{k-1} d(x_{i},x_{i+1})\Biggr|
	+ \Biggl|\sum_{i=0}^{k-1} d(x_{i},x_{i+1})\mspace{-0.4mu}-\mspace{-0.4mu}\frac{1}{2} \sum_{i=0}^{n-1}d(x_{i},x_{i+1})\Biggr|\\
&\qquad+\frac{1}{2}\Biggl|d_{\varepsilon}(x,y)-\sum_{i=0}^{n-1}d(x_{i},x_{i+1})\Biggr|\\
&\leq \varepsilon+\varepsilon+\frac{\varepsilon}{2} = \frac{5}{2}\varepsilon,
\end{align*}
and in the same way from \eqref{e:mp6}, \eqref{e:mp4} and \eqref{e:mp1} that
$|2d_{\varepsilon}(y,z)-d_{\varepsilon}(x,y)|\leq 5\varepsilon$.
\end{proof}

\begin{proof}[Proof of Proposition \textup{\ref{p:cc}}]
$\textup{(b)}\mspace{-1mu}\Rightarrow\mspace{-1mu}\textup{(a)}$:
Let $\varepsilon >0$ and $x,y \in X$. Note that $\rho_{C^{-1}\varepsilon}(x,y)=\rho(x,y)$
by the definition \eqref{e:depsilon} of $\rho_{C^{-1}\varepsilon}(x,y)$ and the
geodesic property of $\rho$. Since $C^{-1} \rho \leq d \leq C \rho$ by (b), each
$C^{-1}\varepsilon$-chain in $(X,\rho)$ from $x$ to $y$ is also an $\varepsilon$-chain
in $(X,d)$ from $x$ to $y$, and therefore
\begin{equation*}
d_{\varepsilon}(x,y) \leq C \rho_{C^{-1}\varepsilon}(x,y) = C \rho(x,y) \leq C^{2} d(x,y).
\end{equation*}

\noindent
$\textup{(a)}\mspace{-1mu}\Rightarrow\mspace{-1mu}\textup{(b)}$:
Note that for each $x,y \in X$, $(0,\infty)\ni\varepsilon \mapsto d_{\varepsilon}(x,y)$ is a non-increasing
function and hence the limit $\rho(x,y):=\lim_{\varepsilon \downarrow 0}d_{\varepsilon}(x,y)$
exists. Since $d_{\varepsilon}$ is a metric on $X$ and $d \leq d_{\varepsilon} \leq C d$
for any $\varepsilon>0$ for some $C\geq 1$ by (a), $\rho$ is a metric on $X$, satisfies
$d \leq \rho \leq C d$ and is thus bi-Lipschitz equivalent to $d$, which in particular
yields the completeness of the metric space $(X,\rho)$, thanks to that of $(X,d)$ implied
by the assumed relative compactness of $B(x,r)$ in $X$ for all $(x,r)\in X\times(0,\infty)$.

It remains to prove that $(X,\rho)$ is geodesic, and by its completeness and
\cite[Proof of Theorem 2.4.16]{BBI} it suffices to show that for any $x,y \in X$
there exists a midpoint $z\in X$ in $(X,\rho)$ between $x,y$. To this end,
let $x,y \in X$ and, noting Lemma \ref{l:mp}, for each $n\in\mathbb{N}$
choose $z_{n}\in X$ so that
\begin{equation} \label{e:n-midpoint}
|2d_{n^{-1}}(x,z_{n})-d_{n^{-1}}(x,y)| \leq 5n^{-1}\quad\textrm{and}\quad
	|2d_{n^{-1}}(y,z_{n})-d_{n^{-1}}(x,y)| \leq 5n^{-1}.
\end{equation}
Then since $\{z_{n}\}_{n=1}^{\infty}$ is included in the relatively compact subset
$B(x,Cd(x,y)+5)$ of $X$ by \eqref{e:n-midpoint} and $d \leq d_{n^{-1}} \leq C d$, there
exists a subsequence $\{z_{n_{k}}\}_{k=1}^{\infty}$ of $\{z_{n}\}_{n=1}^{\infty}$
converging to some $z\in X$ in $(X,d)$. Now for any $k\in\mathbb{N}$, by the triangle
inequality for $d_{n_{k}^{-1}}$, \eqref{e:n-midpoint}, $d_{n_{k}^{-1}} \leq C d$
and $\lim_{j\to\infty}d(z,z_{n_{j}})=0$ we obtain
\begin{align*}
\bigl|2d_{n_{k}^{-1}}(x,z)-d_{n_{k}^{-1}}(x,y)\bigr|
	&\leq 2\bigl|d_{n_{k}^{-1}}(x,z)-d_{n_{k}^{-1}}(x,z_{n_{k}})\bigr|+\bigl|2d_{n_{k}^{-1}}(x,z_{n_{k}})-d_{n_{k}^{-1}}(x,y)\bigr|\\
&\leq 2d_{n_{k}^{-1}}(z,z_{n_{k}})+5n_{k}^{-1}\leq 2Cd(z,z_{n_{k}})+5n_{k}^{-1}\xrightarrow{k\to\infty}0,
\end{align*}
which yields $2\rho(x,z)-\rho(x,y)=\lim_{k\to\infty}\bigl(2d_{n_{k}^{-1}}(x,z)-d_{n_{k}^{-1}}(x,y)\bigr)=0$.
Exactly the same argument also shows $2\rho(y,z)-\rho(x,y)=0$, proving that $z$ is
a midpoint in $(X,\rho)$ between $x,y$ and thereby completing the proof.
\end{proof}

\subsection{Lebesgue's differentiation theorem for singular measures} \label{ssec:Lebesgue-diff-sing}
\begin{proposition}[cf.\ {\cite[Theorem 7.13]{Rud}}]\label{p:rd}
Let $(X,d,m)$ be a metric measure space satisfying \hyperlink{vd}{$\on{VD}$},
let $\nu$ be a Radon measure on $X$, i.e., a Borel measure on $X$ which is finite on
any compact subset of $X$, and assume $\nu \perp m$. Then
\begin{equation} \label{e:rd}
\lim_{r \downarrow 0} \frac{\nu(B(x,r))}{m(B(x,r))}=0 \qquad \textrm{for $m$-a.e.\ $x \in X$.}
\end{equation}
\end{proposition}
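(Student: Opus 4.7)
The approach is to follow the classical argument of Rudin \cite[Theorem 7.13]{Rud}, combining inner regularity of $\nu$ with a Vitali-type $5r$-covering lemma (valid for any family of balls with uniformly bounded radii in an arbitrary metric space). By $\nu \perp m$, fix a Borel set $A \subset X$ with $m(A) = 0$ and $\nu(X \setminus A) = 0$. Since $X$ is separable and $m$ is Radon, a countable covering of $X$ by relatively compact open balls reduces the claim to showing that, for each open ball $B_{0}=B(x_{0},R)$ whose closure is compact and each $\alpha > 0$,
\[
m(E_{\alpha} \cap B_{0}) = 0, \quad \text{where}\quad E_{\alpha} := \Bigl\{x \in X \Bigm| \limsup_{r \downarrow 0} \nu(B(x,r))/m(B(x,r)) > \alpha\Bigr\}.
\]
Borel measurability of $E_{\alpha}$ follows routinely from the lower semi-continuity of $x \mapsto \nu(B(x,r))$ for each fixed $r > 0$.

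First I would fix $\varepsilon > 0$ and exploit inner regularity of the finite Borel measure $\nu|_{\overline{B(x_{0},2R)}}$ (valid since $\overline{B(x_{0},2R)}$ is compact and $\nu$ is Radon) to select a compact $K \subset A \cap \overline{B(x_{0},2R)}$ with $\nu(\overline{B(x_{0},2R)} \setminus K) < \varepsilon$; this is possible because $\nu(\overline{B(x_{0},2R)}) = \nu(A \cap \overline{B(x_{0},2R)})$ by $\nu(X \setminus A) = 0$. Set $\nu_{2} := \nu|_{X \setminus K}$, so that $\nu_{2}(\overline{B(x_{0},2R)}) < \varepsilon$. For any $x \in (E_{\alpha} \cap B_{0}) \setminus K$, the compactness of $K$ provides $\delta_{x} > 0$ with $B(x,\delta_{x}) \cap K = \emptyset$, so that $\nu(B(x,r)) = \nu_{2}(B(x,r))$ whenever $r < \delta_{x}$; hence the condition $x \in E_{\alpha}$ furnishes arbitrarily small $r \in (0,(\delta_{x} \wedge R)/5)$ with $B(x,r) \subset B(x_{0},2R)$ and $\nu_{2}(B(x,r)) > \alpha \cdot m(B(x,r))$.

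Next I would apply the $5r$-covering lemma to the family of all such balls (whose radii are uniformly bounded by $R/5$) to extract a disjoint subfamily $\{B(x_{i},r_{i})\}_{i}$ with $(E_{\alpha} \cap B_{0}) \setminus K \subset \bigcup_{i} B(x_{i},5r_{i})$; the family is automatically countable because $\sum_{i} m(B(x_{i},r_{i})) \leq m(B(x_{0},2R)) < \infty$ and each $m(B(x_{i},r_{i})) > 0$. Since $5 \leq 2^{3}$, three iterations of \hyperlink{vd}{$\on{VD}$} yield $m(B(x,5r)) \leq C_{D}^{3} m(B(x,r))$, and combining this with disjointness and the selection inequality gives
\[
m\bigl((E_{\alpha} \cap B_{0}) \setminus K\bigr)
  \leq \sum_{i} m(B(x_{i},5r_{i}))
  \leq \frac{C_{D}^{3}}{\alpha} \sum_{i} \nu_{2}(B(x_{i},r_{i}))
  \leq \frac{C_{D}^{3}}{\alpha} \nu_{2}(\overline{B(x_{0},2R)})
  < \frac{C_{D}^{3}\varepsilon}{\alpha}.
\]
Since $K \subset A$ forces $m(K) = 0$, this yields $m(E_{\alpha} \cap B_{0}) < C_{D}^{3}\varepsilon/\alpha$, and letting $\varepsilon \downarrow 0$ concludes.

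The main step requiring care is the construction of the truncation $\nu_{2}$, which must simultaneously (i) have small total mass on $\overline{B(x_{0},2R)}$, so that the weak-type bound extracted from the covering lemma is itself small, and (ii) agree with $\nu$ on sufficiently small balls centred away from $K$, enabling the condition $x \in E_{\alpha}$ to transfer to $\nu_{2}$. The inner regularity of $\nu$ applied within the compact set $\overline{B(x_{0},2R)}$ delivers both requirements at once. Apart from this step, the proof is a direct transcription of the Euclidean argument to the doubling metric setting, with the Vitali covering lemma and \hyperlink{vd}{$\on{VD}$} playing the roles that the standard Vitali covering theorem and the obvious volume scaling play in $\mathbb{R}^{k}$.
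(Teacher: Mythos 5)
Your proof is correct and takes essentially the same approach as the paper's: both transcribe Rudin's classical argument to the doubling metric setting, using inner regularity of $\nu$ to excise a compact $K$ of full $\nu$-mass and zero $m$-mass, and then running a Vitali-covering/weak-type maximal estimate against \hyperlink{vd}{$\on{VD}$} to control the remainder $\nu|_{X\setminus K}$. The only superficial differences are that the paper reduces globally to $\nu(X)<\infty$ and uses the finite $3r$-Vitali lemma plus inner regularity of $m$ (yielding $C_D^2$), whereas you localize to a ball and invoke the infinite $5r$-covering lemma directly (yielding $C_D^3$).
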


\begin{proof}
By taking $x_{0}\in X$ and considering $\nu(\cdot\cap B(x_{0},n))$ for each $n\in\mathbb{N}$
instead of $\nu$, we may assume without loss of generality that $\nu(X)<\infty$.
For each $x\in X$, we define
\begin{align*}
(Q_{r} \nu)(x) &:= \frac{\nu(B(x,r))}{m(B(x,r))}, \quad r\in(0,\infty), &
	\begin{split}
	(M \nu)(x) &:= \sup_{r\in(0,\infty)} (Q_{r} \nu)(x),\\
	(\overline{D}\nu)(x) &:= \limsup_{r \downarrow 0} (Q_{r} \nu)(x).
	\end{split}
\end{align*}
Since $(0,\infty) \ni r \mapsto m(B(x,r))$ and $(0,\infty) \ni r \mapsto \nu(B(x,r))$
are left-continuous, we have
\begin{equation} \label{e:rd1}
(M\nu)(x) = \sup_{r \in (0,\infty)\cap \mathbb{Q}} (Q_{r}\nu)(x)\quad\textrm{ and }\quad
	(\overline{D}\nu)(x) = \lim_{n \to \infty} \sup_{r \in(0,n^{-1})\cap \mathbb{Q}} (Q_r\nu)(x).
\end{equation}
An easy application of the triangle inequality shows that the functions
$X\ni x \mapsto m(B(x, r))$ and $X\ni x \mapsto \nu(B(x,r))$ are lower semi-continuous and
hence Borel measurable. Thus $X\ni x \mapsto (Q_{r}\nu)(x)$ is also Borel measurable and so
are $X\ni x \mapsto (M\nu)(x)$ and $X\ni x \mapsto (\overline{D}\nu)(x)$ by \eqref{e:rd1}.
Let $C_{D}$ denote the constant in \hyperlink{vd}{$\on{VD}$}. Using the estimate
$m(B(x,3r)) \leq C_{D}^{2} m(B(x,r))$ for $(x,r)\in X\times(0,\infty)$ and the arguments
in \cite[Proofs of Lemma 7.3 and Theorem 7.4]{Rud} together with the inner regularity
of $m$ (see, e.g., \cite[Theorem 2.18]{Rud}), we obtain the maximal inequality
\begin{equation} \label{e:rd3}
m\bigl((M\nu)^{-1}((\lambda,\infty])\bigr) \leq C_{D}^{2} \lambda^{-1} \nu(X)
	\qquad \textrm{for all $\lambda>0$.}
\end{equation}

Let $\lambda,\varepsilon>0$. Since $\nu \perp m$, the inner regularity of $\nu$
(see, e.g., \cite[Theorem 2.18]{Rud}) implies the existence of a compact subset $K$
of $X$ such that $m(K)=0$ and $\nu(K) > \nu(X) -\varepsilon$.
Set $\nu_{1} := \nu(\cdot \cap K)$ and $\nu_{2} := \nu(\cdot\cap(X\setminus K))$,
so that $\nu=\nu_{1}+\nu_{2}$ and $\nu_{2}(X) < \varepsilon$.
For every $x \in X \setminus K$, we have
\begin{equation*}
(\overline{D}\nu)(x) = (\overline{D}\nu_{2})(x) \leq (M \nu_{2})(x),
\end{equation*}
hence
\begin{equation*}
(\overline{D}\nu)^{-1}((\lambda,\infty]) \subset K \cup (M \nu_{2})^{-1}((\lambda,\infty]),
\end{equation*}
and therefore it follows from $m(K)=0$, \eqref{e:rd3} for the measure $\nu_{2}$
and $\nu_{2}(X) < \varepsilon$ that
\begin{equation} \label{e:rd4}
m\bigl((\overline{D}\nu)^{-1}((\lambda,\infty])\bigr)
	\leq m\bigl((M\nu_{2})^{-1}((\lambda,\infty])\bigr)
	\leq C_{D}^{2} \lambda^{-1} \nu_{2}(X)
	< C_{D}^{2} \lambda^{-1} \varepsilon.
\end{equation}
Since \eqref{e:rd4} holds for every $\lambda,\varepsilon>0$,
we conclude that $\overline{D}\nu=0$ $m$-a.e., which is \eqref{e:rd}.
\end{proof}

\end{appendix}

\section*{Acknowledgements}
The authors would like to thank Fabrice Baudoin for having pointed out to them that
the results in Proposition \ref{p:dom} and Remark \ref{rmk:dom} were obtained already
in \cite[Lemma 2.11]{ABCRST} with a very similar proof in a more restrictive situation.
They also would like to thank Martin T.\ Barlow for having suggested in \cite{Bar19}
considering the thin scale irregular Sierpi\'{n}ski gaskets in relation to Remark \ref{rmk:main}
and Conjecture \ref{conj:main}, and an anonymous referee for having shown them
the elementary proof of Proposition \ref{p:sisg-case-sing}.

The first author was supported in part by JSPS KAKENHI Grant Number JP18H01123.

The second author was supported in part by NSERC and the Canada Research Chairs program.





\end{document}